\newcommand{\E}{\mathbb{E}}
\newcommand{\V}{\mathbb{V}}
\newcommand{\R}{\mathbb{R}}
\newcommand{\C}{\mathbb{C}}
\newcommand{\Prob}{\mathbb{P}}
\newcommand{\vnorm}[1]{\left|\left|#1\right|\right|}
\newcommand{\partialx}[2]{\frac{\partial #1}{\partial #2}}
\newcommand{\blinded}[1]{ #1 }
\newtheorem{theorem}{Theorem}[section]
\newtheorem{corollary}[theorem]{Corollary}
\newtheorem{definition}[theorem]{Definition}
\newtheorem{lemma}[theorem]{Lemma}
\newtheorem{properties}[theorem]{Properties}
\newtheorem{remark}[theorem]{Remark}
\title{The Lambert Way to Gaussianize heavy tailed data with the inverse of Tukey's h transformation as a special case}
\author{Georg M.\ Goerg}
\affil{Department of Statistics, Carnegie Mellon University \\
Pittsburgh, PA 15213, USA\\
\href{mailto:gmg@stat.cmu.edu}{gmg@stat.cmu.edu}, \url{www.stat.cmu.edu/\textasciitilde gmg}
}
\date{\today}
\begin{document}
\pagenumbering{roman}

\maketitle

\linenumbers

\graphicspath{{images/}} 
\DeclareGraphicsExtensions{.png}

I present a parametric, bijective transformation to generate heavy tail versions $Y$ of arbitrary RVs $X \sim F_X$. 
The tail behavior of the so-called \emph{heavy tail Lambert W $\times$ $F_X$} RV $Y$ depends on a tail parameter $\delta \geq 0 $: for $\delta = 0$, $Y \equiv X$, for $\delta > 0$ $Y$ has heavier tails than $X$. For $X$ being Gaussian, this meta-family of heavy-tailed distributions reduces to Tukey's $h$ distribution. Lambert's $W$ function provides an explicit inverse transformation, which can be estimated by maximum likelihood. This inverse can remove heavy tails from data, and also provide analytical expressions for the cumulative distribution (cdf) and probability density function (pdf). As a special case, these yield explicit formulas for Tukey's $h$ pdf and cdf - to the author's knowledge for the first time in the literature.  Simulations and applications to S\&P 500 log-returns and solar flares data demonstrate the usefulness of the introduced methodology. 

The \textsc{R} package \href{cran.r-project.org/web/packages/LambertW}{\texttt{LambertW}} implementing the presented methodology is publicly available at \href{cran.r-project.org/}{\texttt{CRAN}}.

\newpage
\singlespacing
\tableofcontents
\newpage
\setstretch{1.5}
\pagenumbering{arabic}
\section{Introduction}
\label{sec:introduction}

Statistical theory and practice are both tightly linked to Gaussianity. In theory, many methods require Gaussian data or noise: 
\begin{inparaenum}[i)]
\item regression often assumes Gaussian errors;
\item pattern recognition for images often model noise as a Gaussian random field \citep{AchimTsakalidesBezerianos03_Imagedenoising};
\item many time series models are based on Gaussian white noise \citep{BrockwellDavis98, Engle82,GrangerJoyeux01}.
\end{inparaenum}

In all these cases, a model $\mathcal{M}_{\mathcal{N}}$, parameter estimates and their standard errors, and other properties, are then studied -- all based on the ideal(istic) assumption of Gaussianity.\\

In practice, however, data/noise often exhibits asymmetry and heavy tails; for example wind speed data \citep{Field04}, human dynamics \newline

\citep{Vazquezetal06_Modelinburts}, or Internet traffic data \citep{Gidlund09_3Gtraffic} -- just to a name few. 
Particularly notable examples are financial data \citep{Cont01_Empiricalproperties, School_RobustEstimationskewandkurt} and speech signals \citep{AysalBarner06}, which almost exclusively exhibit heavy tails. Thus a model $\mathcal{M}_{\mathcal{N}}$ developed for the Gaussian case does not necessarily provide accurate inference anymore.

One way to overcome this shortcoming is to replace $\mathcal{M}_{\mathcal{N}}$ with a new model $\mathcal{M}_{G}$, where $G$ is a heavy tail distribution: 
\begin{inparaenum}[i)]
\item regression with Cauchy errors \citep{Smith73_OLSCauchy};
\item image denoising for $\alpha$-stable noise \citep{AchimTsakalidesBezerianos03_Imagedenoising};
\item forecasting long memory processes with heavy tail innovations \citep{Ilow00_ARFIMAheavytails, PalmaZevallos11_NonGaussianLongMemory}, or ARMA modeling of electricity loads with hyperbolic noise \citep{NowickaZagrajek02}. 
\end{inparaenum}

While such fundamental approaches are attractive from a theoretical perspective, they can become unsatisfactory from a practical viewpoint. Many successful statistical models assume Gaussianity, their theory is very well understood, and many algorithms are implemented for the simple -- and often much faster -- Gaussian case. Thus developing models based on an entirely unrelated distribution $G$ is like throwing out the (Gaussian) baby with the bathwater.\\

It would be very useful to transform a Gaussian RV $X$ to a heavy-tailed RV $Y$ and vice versa, and thus rely on knowledge - and software - for the well-understood Gaussian case, while still capturing heavy tails in the data. Optimally such a transformation should: 
\begin{inparaenum}[a)]
\item be bijective;
\item include Normality as a special case for hypothesis testing; and 
\item be parametric so the optimal transformation can be estimated efficiently.
\end{inparaenum}

\begin{figure}[!t]
\centering
\tikzstyle{block} = [draw, fill=blue!20, rectangle, 
    minimum height=3em, minimum width=6em]
\tikzstyle{RV} = [draw, fill=white, circle, 
    minimum height=0.8em, minimum width=0.8em, node distance = 1.2cm]
\tikzstyle{transformation} = [draw, fill=gray!50, rectangle, 
    minimum height=2em, minimum width=3em, node distance = 3.5cm]    
\tikzstyle{cdf} = [draw, fill=gray!30, rectangle, node distance = 3.5cm,
    minimum height=3em, minimum width=5em] 
    \tikzstyle{COOR} = [coordinate]
    \tikzstyle{inference} = [draw, fill=gray!30, rectangle, node distance = 2.7cm,
    minimum height=3em, minimum width=5em, rounded corners]

\tikzstyle{description} = [rectangle, draw=white, thick, fill=white!50, 
 	text width=8em, minimum height=1em, minimum width = 5em, text centered, node distance=-1.35cm, inner sep=2pt, rounded corners]
\tikzstyle{descriptionB} = [rectangle, draw=white, thick, fill=white!50, 
 	text width=8em, minimum height=1em, minimum width = 5em, text centered, node distance=-1cm, inner sep=2pt, rounded corners]
 	\tikzstyle{empty_h} = [rectangle, draw=white, thick, fill=white!50, 
 	text width=2em, minimum height=1em, minimum width = 1em, text centered, node distance=4cm, inner sep=2pt, rounded corners]
 	\tikzstyle{empty_v} = [rectangle, draw=white, thick, fill=white!50, 
 	text width=2em, minimum height=1em, minimum width = 1em, text centered, node distance=1.5cm, inner sep=2pt, rounded corners]
\tikzstyle{pinstyle} = [pin edge={to-,thin,black}]


\begin{tikzpicture}[auto, node distance=2cm,>=latex']
    \node [RV] (input) {$X$ and $\mathbf{x}$};
    \node [empty_v, below of = input] (empty) {\begin{tabular}{c}  \end{tabular}};
    \node [transformation, right of=input] (system_S) {$H_{\tau}(X)$};
    \node [empty_h, right of = system_S, node distance = 3.5cm] (empty2) {\begin{tabular}{c}  \end{tabular}};
    \node [RV, below of=empty2] (output) {$Y$ and $\mathbf{y}$};
    \node [RV, below of = empty] (backtransformed_X)  {$X_{\tau}$ and $\mathbf{x}_{\tau}$};    
    \node [transformation, right of = backtransformed_X] (inverse_system_S_1) {$W_{\tau}(Y) = H^{-1}_{\tau}(Y)$};

       
    \node [cdf, name=Fx, below of = empty] {$F_X$ and $F_{X_{\tau}}$};      
    \node [cdf, name=LambertW_Fx, right of = Fx, node distance = 7cm] {\begin{tabular}{c}Lambert W $\times$ $F_X$ \\ (Tukey's $h$ if $F_X = \mathcal{N}$) \end{tabular}};  

    \node [inference, name=infer_Y, below of = LambertW_Fx] {\begin{tabular}{c}inference \\ on $Y$\end{tabular}};  
    \node [inference, name=infer_Xhat, below of = Fx] {\begin{tabular}{c}inference \\ on $X_{\tau}$ \end{tabular}};       
    \node [transformation, right of=infer_Xhat] (H_delta_u) {$H_{\tau}(\cdot)$};


	\node [description, below of = input] (latent) {\begin{tabular}{c} latent (Gaussian) \end{tabular}};
	\node [description, right of = latent, node distance = 7cm] (observed) {\begin{tabular}{c} observed (heavy tails) \end{tabular}};
	\node [description, below of = Fx, node distance = -0.91cm] (recoverd) {recovered};
\node [descriptionB, below of = inverse_system_S_1] (Gaussianize) {\begin{tabular}{c} ``Gaussianize'' \\ heavy tails \end{tabular}};
    \path[->, solid] (input) edge node {} (system_S);
    \path[->, solid] (system_S) edge node {} (output);
    \path[->, solid] (output) edge node {} (inverse_system_S_1);
    \path[->, solid] (inverse_system_S_1) edge node {} (backtransformed_X);

    \draw[<->, solid] (Fx) edge node {} (LambertW_Fx);
    \draw[<->, dashed] (input) edge node {} (backtransformed_X);
    
    \draw[<->, dashed] (input) edge[bend right= 70] node[near end, left] {} (Fx);
    \draw[<->, dashed] (backtransformed_X) edge[bend right= 70] node[near end, left] {} (Fx);
    \draw[<->, dashed] (output) edge node {} (LambertW_Fx);

	
	 \draw[->, dashed] (Fx) edge node[left] {\begin{tabular}{c}methods/models \\ assuming  \\ Gaussianity\end{tabular}} (infer_Xhat);
	 \draw[->, dashed] (LambertW_Fx) edge node[left] {exact} (infer_Y);
	 \draw[->, dashed] (LambertW_Fx) edge node[right] {\begin{tabular}{c}methods/models \\ for heavy-tailed \\ distributions  (if $\exists$)\end{tabular}} (infer_Y);
     \draw[->,solid] (infer_Xhat) edge node {} (H_delta_u);
     \draw[->,solid] (H_delta_u) edge node {approx.} (infer_Y);   
     
\end{tikzpicture}
\caption{\label{fig:LambertW_heavy_flowchart} Schematic view of the heavy tail Lambert W $\times$ $F_X$ framework. (left) Latent input $X \sim F_X$: $H_{\tau}(X)$ from \eqref{eq:normalized_LambertW_Y} transforms (solid arrows) $X$ to $Y \sim $ Lambert W $\times$ $F_X$ and generates heavy tails. (right) Observed heavy tail world $Y$ and $\mathbf{y}$: (1) use $W_{\tau}(\cdot)$ to back-transform $\mathbf{y}$ to latent ``Normal'' $\mathbf{x}_{\tau}$, (2) use model $\mathcal{M}_{\mathcal{N}}$ of your choice (regression, time series models, hypothesis testing, etc.) for inference on $\mathbf{x}_{\tau}$, and (3) convert results back to the original ``heavy-tailed world'' of $\mathbf{y}$.}
\end{figure}
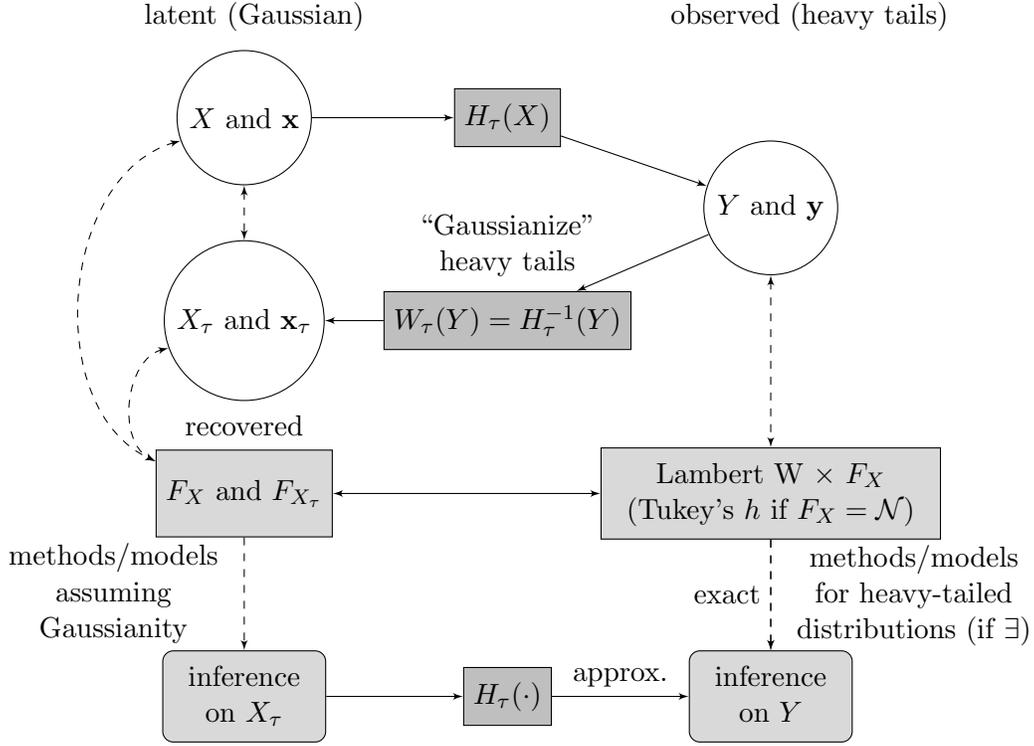

Figure \ref{fig:LambertW_heavy_flowchart} illustrates this pragmatic approach: researchers can make their observations $\mathbf{y}$ as Gaussian as possible ($\mathbf{x}_{\tau}$) before making inference based on their favorite Gaussian model $\mathcal{M}_{\mathcal{N}}$. This avoids the development of - or the data analysts waiting for - a whole new theory of $\mathcal{M}_{G}$ and new implementations based on a particular heavy-tailed distribution $G$, while still improving statistical inference on heavy-tailed data $\mathbf{y}$. For example, consider $\mathbf{y} = (y_1, \ldots, y_{500})$ from a standard Cauchy distribution $\mathcal{C}(0,1)$ in Fig.\ \ref{fig:Cauchy_sample}: modeling heavy tails by a transformation makes it even possible to Gaussianize this Cauchy sample (Fig.\ \ref{fig:Gaussianized_Cauchy_sample}). This ``nice'' data $\mathbf{x}_{\tau}$ can then be subsequently analyzed with common techniques.  For example, the location can now be estimated using the sample average (Fig.\ \ref{fig:Cauchy_Gaussian_avg}). For details see Section \ref{sec:sample_mean_Cauchy}.\\

\citet{Wassermanetal09_Nonparanormal} use a semi-parametric approach, where $Y$ has a \emph{nonparanormal} distribution if $f(Y) \sim \mathcal{N}(\mu, \sigma^2)$ where $f(\cdot)$ is an increasing smooth function; they estimate $f(\cdot)$ using non-parametric methods. This leads to a greater flexibility in the distribution of $Y$, but it suffers from two drawbacks:
\begin{inparaenum}[i)]
\item \label{item:slow_convergence} non-parametric methods have slower convergence rates and thus need large samples, and
\item \label{item:identifiability} for identifiability of $f(\cdot)$, $\E f(Y) \equiv \E Y$ and $\V f(Y) \equiv \V Y$ must hold.
\end{inparaenum} 
While \ref{item:slow_convergence}) is inherent to non-parametric methods, point \ref{item:identifiability}) requires $Y$ to have finite mean and variance, which is especially limiting for heavy-tailed data where this condition is often not met. Thus here we use parametric transformations which do not rely on restrictive identifiability conditions and also work well for small sample sizes.\\

The main contributions of this work are three-fold:
\begin{inparaenum}[a)]
\item  following \citet{GMGLambertW_Skewed} I introduce a meta-family of heavy tail Lambert W $\times$ $F_X$ distributions with Tukey's $h$ \citep{Hoaglin06} as a special case;
\item  I present a bijective transformation to ``Gaussianize'' heavy-tailed data (Section \ref{sec:heavy_tailed}); and
\item  I also provide simple expressions for the cumulative distribution function (cdf) $G_Y(y)$ and probability density function (pdf) $g_Y(y)$ - also for Tukey's $h$ --, which can be easily implemented in statistics software (Section \ref{sec:dist_dens}).
\end{inparaenum}

To the author's knowledge analytic expressions for Tukey's $h$ cdf and pdf are presented here (Section \ref{sec:Gaussian}) for the first time in the literature. Section \ref{sec:estimation} introduces a methods of moments estimator and studies the maximum likelihood estimator (MLE). Section \ref{sec:simulations} shows their finite sample properties.

As has been shown in many case studies, Tukey's $h$ distribution (heavy tail Lambert W $\times$ Gaussian) is useful to model data with unimodal, heavy-tailed densities. Section \ref{sec:applications} not only confirms this finding for S\&P 500 log-returns, but also demonstrates the benefits of removing heavy tails for exploratory data analysis: Gaussianizing $\gamma$-ray intensity data reveals a bimodal density, which even non-parametric estimators fail to detect if heavy tails are not removed. Finally, we discuss the new methodology and future work in Section \ref{sec:discussion_outlook}.  All proofs are given in the Supplementary Material, Appendix \ref{sec:proofs}.

Computations, figures, and simulations were done in \textsc{R} \citep{R10}. \blinded{The R package \href{http://cran.r-project.org/web/packages/LambertW/index.html}{\texttt{LambertW}} is publicly available on CRAN.}

\begin{figure}[!t]
\centering
\subfloat[Random sample $\mathbf{y} \sim \mathcal{C}(0,1)$]
{\includegraphics[width=.45\textwidth]{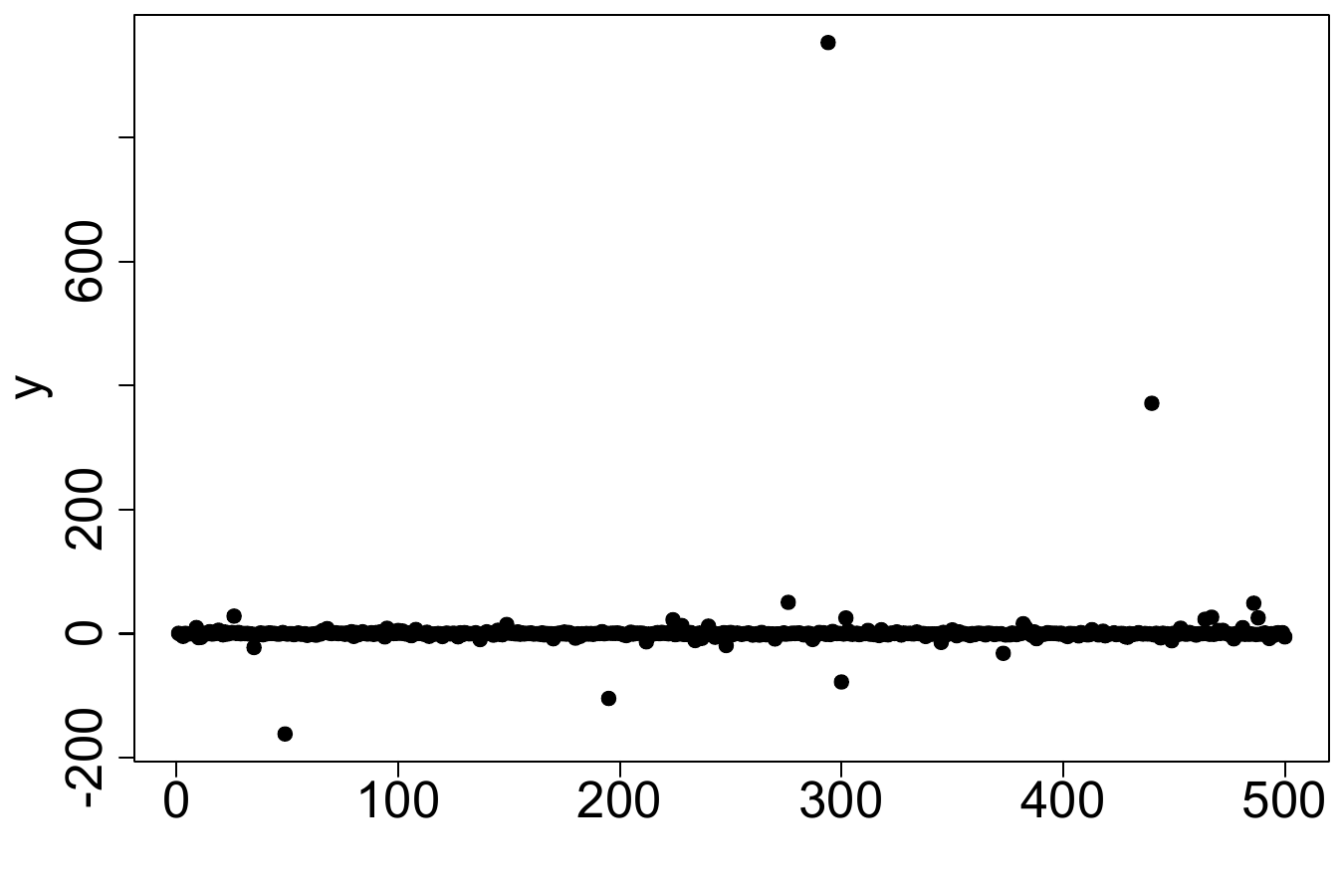}\label{fig:Cauchy_sample}}
\hspace{0.01\linewidth}
\subfloat[Box-Cox transformed $\mathbf{x}_{\widehat{\lambda}_{MLE}}$ with $\widehat{\lambda} = 0.37$]
{\includegraphics[width=.45\textwidth]{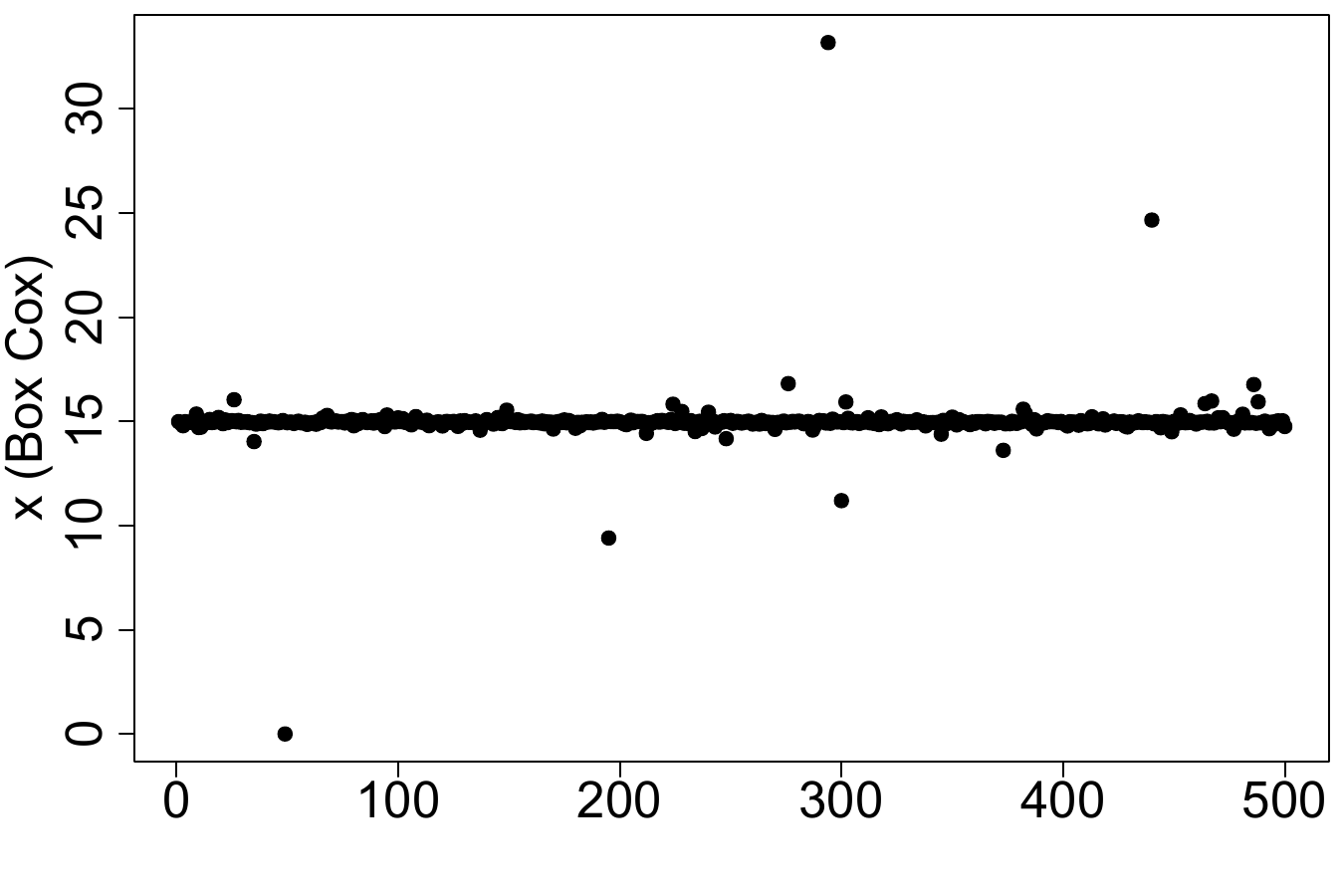}\label{fig:BoxCox_Cauchy_sample}}

\subfloat[Gaussianized $\mathbf{x}_{\widehat{\tau}_{MLE}}$ with $\widehat{\tau} = (0.03, 1.05, 0.86)$]{\includegraphics[width=.45\textwidth]{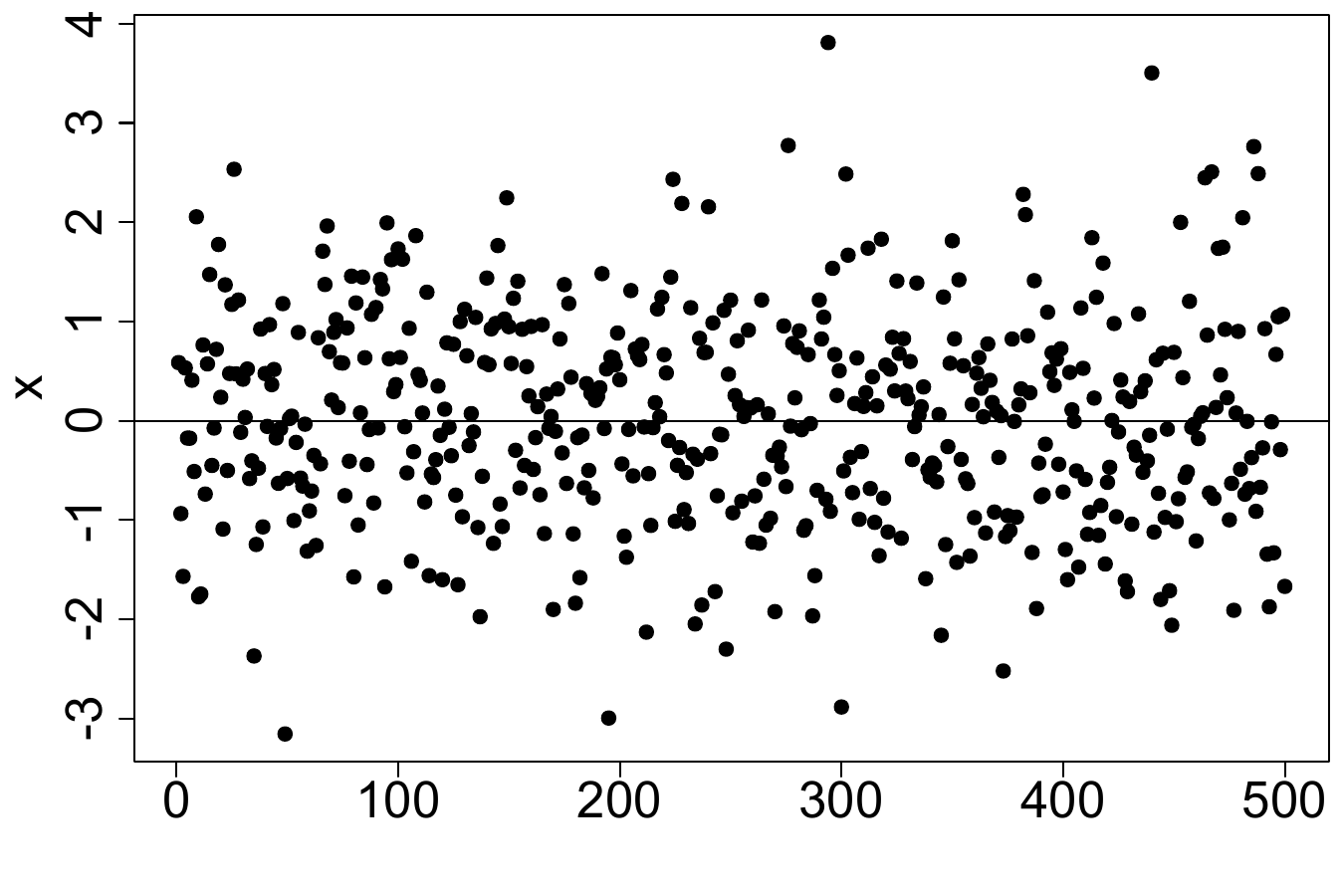}\label{fig:Gaussianized_Cauchy_sample}}
\hspace{0.01\linewidth}
\subfloat[Cumulative sample average]
{\includegraphics[width=.45\textwidth]{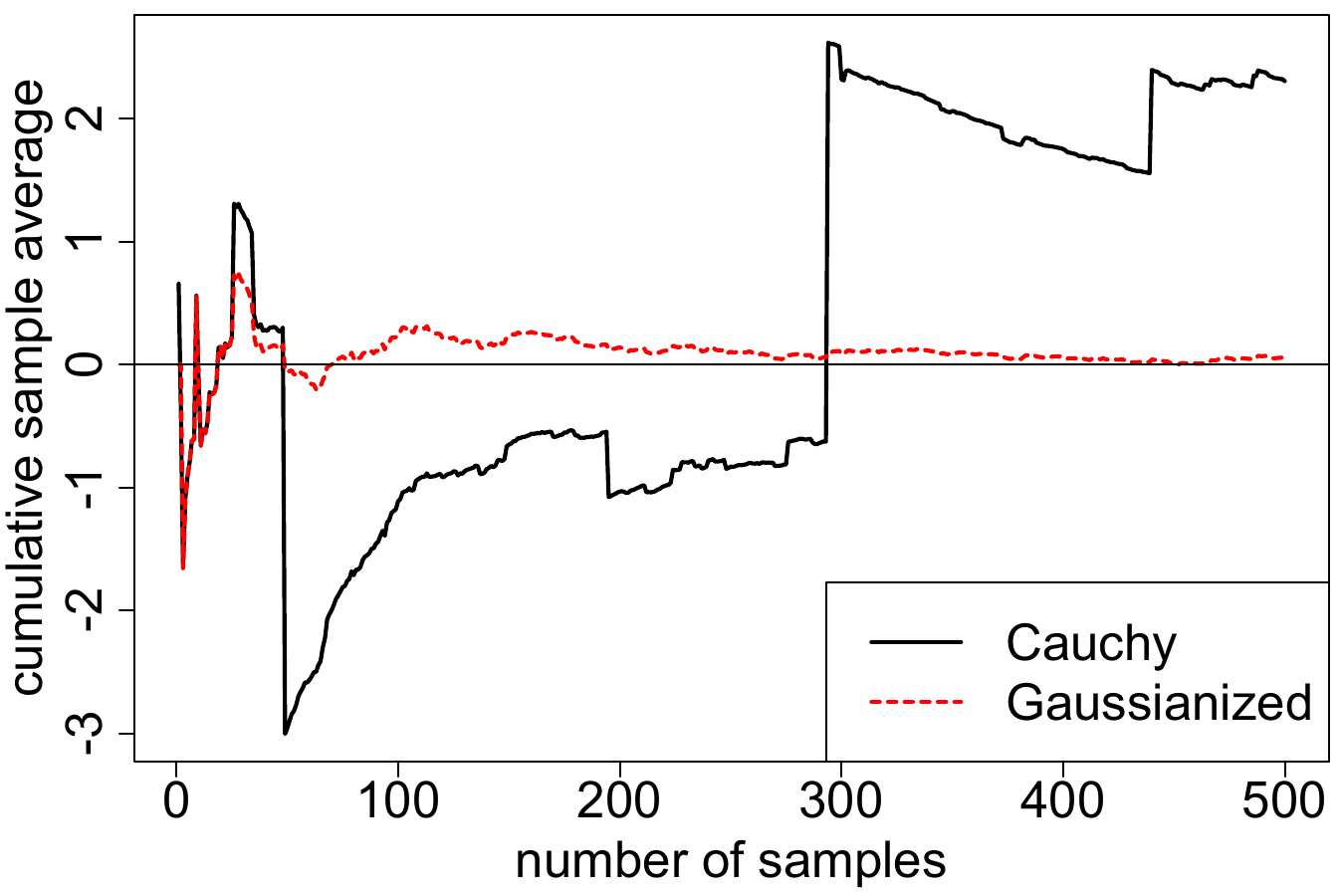}\label{fig:Cauchy_Gaussian_avg}}
\caption{Gaussianizing a standard Cauchy sample. For (d) $\tau^{(n)}$ was estimated for each fixed $n = 5, \ldots, 500$, before Gaussianizing $(y_1, \ldots, y_n)$. }
\label{fig:Gaussianizing_Cauchy}
\end{figure}

\subsection{Multivariate Extensions}
While this work focuses on univariate case, multivariate extensions of the presented methods can be defined component-wise -- analogously to the multivariate version of Tukey's $h$ distribution \citep{FieldGenton06}. While this may not make the transformed RVs jointly Gaussian, it still provides a good starting point for more well-behaved multivariate modeling.

\subsection{Box-Cox Transformation to Remove Heavy Tails}
A popular method to deal with skewed, high variance data is the Box-Cox transformation
\begin{equation}
\label{eq:Box_cox}
\mathbf{y}_{\lambda} = \begin{cases}
\frac{\mathbf{y}^{\lambda} - 1}{\lambda} & \text{ if } \lambda > 0 \\
\log \mathbf{y} & \text{ if } \lambda = 0.
\end{cases} 
\end{equation}
The parameter $\lambda$ can be chosen by MLE. However one major limitation of \eqref{eq:Box_cox} is the non-negativity constraint on $\mathbf{y}$, which prohibits its use in many applications. To avoid this limitation it is common to shift the data, $\tilde{\mathbf{y}} = \mathbf{y} + | \min(\mathbf{y}) | \geq 0$. However, as Fig.\ \ref{fig:BoxCox_Cauchy_sample} shows applying the Box-Cox transformation to the Cauchy sample\footnote{We use $\tilde{\mathbf{y}} = \mathbf{y} + | \min(\mathbf{y}) | + 1$ and use \texttt{boxcox} from the \texttt{MASS} R package; $\widehat{\lambda} = 0.37$.} completely fails. Furthermore, this restricts $Y$ to a half-open interval $[c, \infty)$ and is not desirable if the underlying process can occur on the entire real line, since it undermines statistical inference for yet unobserved data. See \citet{Sakia92_BoxCoxReview} for a more detailed discussion and the Box-Cox transformation in general.\\

Furthermore, the main purpose of the Box-Cox transformation is to stabilize variance \citep{Blaylocketal80_BoxCoxHeteroskedastic, Lawrance87_BoxCox, Tsiotas07_BoxCoxconditionalVariance} and remove right tail skewness \citep{GoncalvesMeddahi11}; a lower empirical kurtosis is merely a by-result of the variance stabilization. In contrast, the Lambert W framework is designed to model heavy-tailed RVs and remove heavy tails from data, and has no difficulties with negative values.

\section{Generating Heavy Tails Using Transformations}
\label{sec:heavy_tailed}
Random variables exhibit heavy tails if more mass than for a Gaussian RV lies at the outer end of the density support. A RV $Z$ has a tail index $a$ if its cdf satisfies $1 - F_Z(z) \sim L(z) z^{-a}$, where $L(z)$ is a slowly varying function at infinity, i.e.\ $\lim_{z \rightarrow \infty} \frac{L(tz)}{L(z)} = 1$ for all $t > 0$ \citep{Baek10_HeavyTail}.\footnote{There are various similar definitions of heavy/fat/long tails; for this work these differences are not essential.}  The heavy tail index $a$ is an important characteristic of $Z$; for example, only moments up to order $a$ exist.

\subsection{Tukey's $h$ Distribution}
A parametric transformation is the basis of Tukey's $h$ RVs \citep{Hoaglin06}
\begin{equation}
\label{eq:Tukey_h}
Z = U \exp \left( \frac{h}{2} U^2 \right), \quad h \geq 0,
\end{equation}
where $U$ is standard Normal RV and $h$ is the heavy tail parameter. The RV $Z$ has tail parameter $a = 1/h$ \citep{MorgenthalerTukey00_FittingQuantiles} and reduces to the Gaussian for $h=0$. \citet{MorgenthalerTukey00_FittingQuantiles} extend the $h$ distribution to the skewed, heavy-tailed family of $hh$ RVs
\begin{equation}
\label{eq:trafo_hh}
Z = \begin{cases}
U \exp \left( \frac{\delta_{\ell}}{2} U^2 \right), & \text{if } U \leq 0, \\
U \exp \left( \frac{\delta_r}{2} U^2 \right),  & \text{if } U > 0,
\end{cases}
\end{equation}
where again $U \sim \mathcal{N}(0,1)$. Here $\delta_{\ell} \geq 0$ and $\delta_r \geq 0$ shape the left and right tail of $Z$, respectively; thus transformation \eqref{eq:trafo_hh} can model skewed and heavy-tailed data - see Fig.\ \ref{fig:G_2delta}. For simplicity let $H_{\delta}(u) := u \exp \left( \frac{\delta}{2} u^2 \right)$.

\begin{figure}
\centering
\subfloat[ $\delta_{\ell} \delta_{r}$ transformation \eqref{eq:trafo_hh} ]{\includegraphics[width=.32\textwidth]{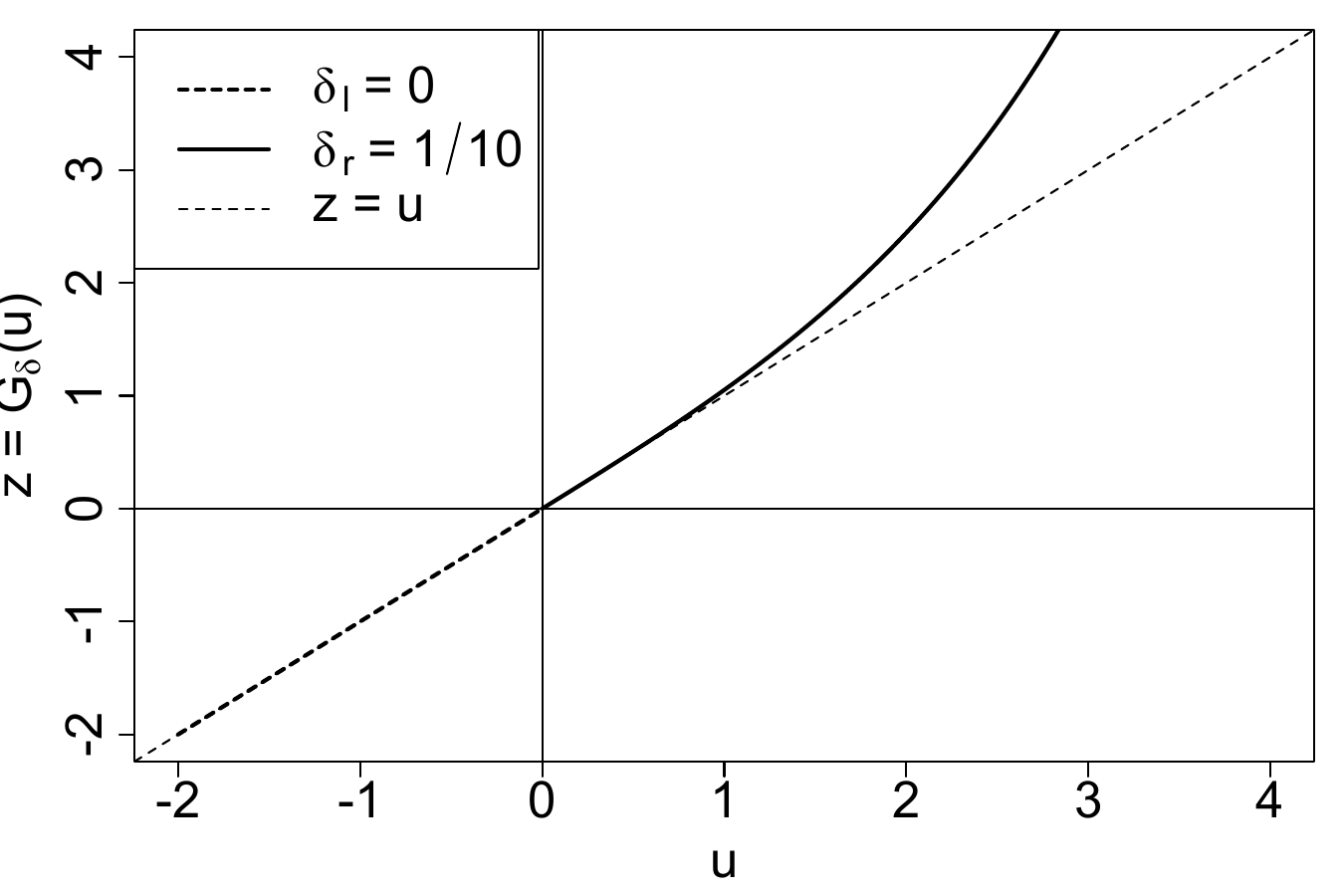}\label{fig:G_2delta}} 
\hspace{0.01\textwidth}
\subfloat[ Inverse $W_{\delta_{\ell}, \delta_{r}}(z)$ in \eqref{eq:backtrafo_hh} ]{\includegraphics[width=.32\textwidth]{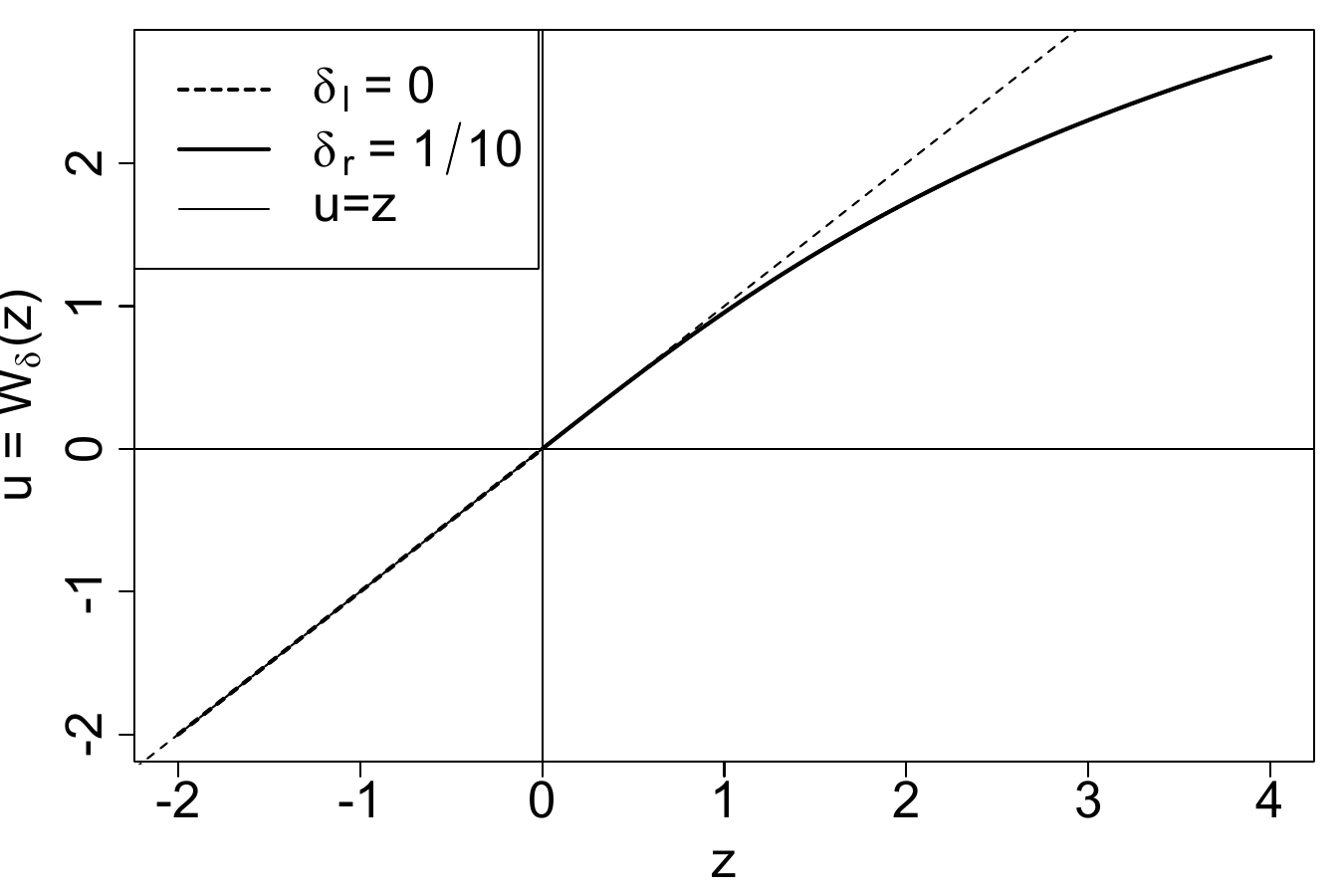}\label{fig:W_2delta}}
\hspace{0.01\textwidth}
\subfloat[ Inverse $W_{\delta}(z)$ \eqref{eq:W_delta} as a function of $\delta$ and $z$] {\includegraphics[width=.32\textwidth]{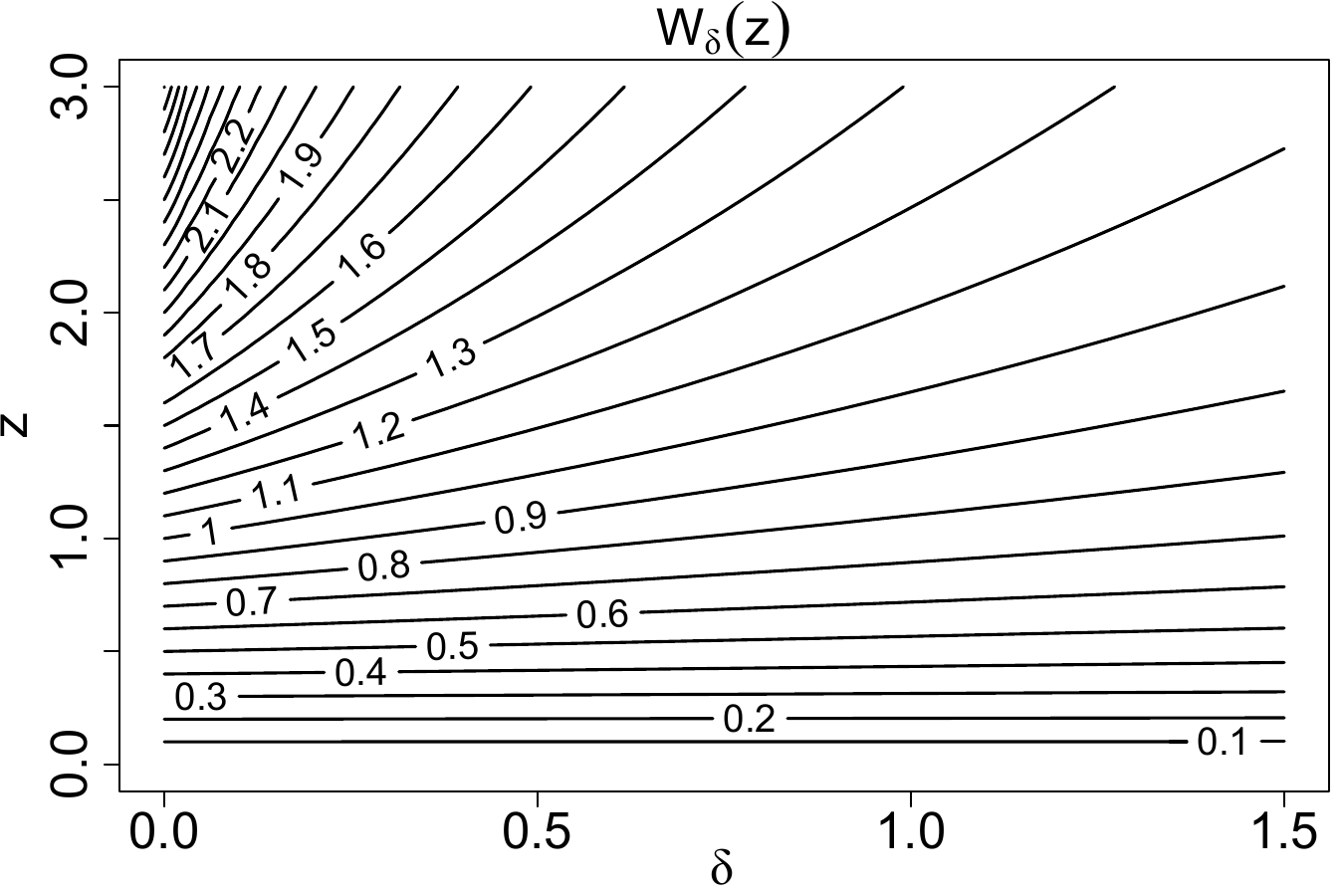}\label{fig:W_delta_z_contour}}
\caption{Transformation and inverse transformation for $\delta_{\ell} = 0$ and $\delta_r = 1/10$: identity on the left (same tail behavior) and a heavy-tailed transformation in the right tail of input $U$.}
\label{fig:G_W}
\end{figure}

Despite their great flexibility they are not popular in statistical practice, because the inverse of \eqref{eq:Tukey_h} or \eqref{eq:trafo_hh} has not been found. Consequently, no closed-form expressions for the cdf or pdf are available.  Although \citet{MorgenthalerTukey00_FittingQuantiles} express the pdf of \eqref{eq:Tukey_h} as ($h \equiv \delta$)
\begin{equation}
g_Z(z) = \frac{f_U \left( H_{\delta}^{-1}(z) \right)}{ H_{\delta}^{'}\left( H_{\delta}^{-1}(z) \right)},
\end{equation}
they fall short of explicitly specifying $H_{\delta}^{-1}(z)$. So far this inverse has been considered analytically intractable \citep{Field04}, or was only numerically approximated  \citep{Todd08_Parametric_pdf_cdf_tukey, Fischer10_financial_teletraffic}. Thus parameter inference relied on matching empirical and theoretical quantiles \citep{Field04, MorgenthalerTukey00_FittingQuantiles}, or by the method of moments \citep{Todd08_Parametric_pdf_cdf_tukey}. Only recently \citet{Todd08_Parametric_pdf_cdf_tukey} provided numerical approximations. Hence, a closed form, analytically tractable pdf that can be computed efficiently is essential for a wide-spread use of Tukey's $h$ (\& variants).\\

In this work I present this long sought explicit inverse, which is readily available in standard statistics software. For ease of notation and concision main results are shown for $\delta_{\ell} = \delta_r = \delta$; analogous results for $\delta_{\ell} \neq \delta_r$ will be stated without details.
\subsection{Heavy Tail Lambert W Random Variables}
Tukey's $h$ transformation \eqref{eq:Tukey_h} is strongly related to the approach taken by \citet{GMGLambertW_Skewed}
to introduce skewness in continuous RVs $X \sim F_X(x)$. In particular, if $Z \sim$ Tukey's $h$, then $Z^2 \sim $ skewed Lambert W $\times \chi^2_1$ with skew parameter $\gamma = h$.
 
Adapting the skew Lambert W $\times$ $F_X$ input/output idea\footnote{Most concepts and methods from the skew Lambert W $\times F_X$ case transfer one-to-one to the heavy tail Lambert W RVs presented here. Thus for the sake of concision I refer to \citet{GMGLambertW_Skewed} for details and background information on the Lambert W framework.} (see Fig.\ \ref{fig:LambertW_heavy_flowchart}), Tukey's $h$ RVs can be generalized to \emph{heavy-tailed Lambert W $\times$ $F_X$ RVs}.

\begin{definition} 
\label{def:noncentral_nonscaled}
Let $U$ be a continuous RV with cdf $F_U\left(u \mid \boldsymbol \beta \right)$, pdf $f_U\left(u \mid \boldsymbol \beta \right)$, and parameter vector $\boldsymbol \beta$. Then,
\begin{equation}
\label{eq:non_central_trafo_def}
Z = U \exp \left(\frac{\delta}{2} U^2 \right), \quad \delta \in \R,
\end{equation}
is a \emph{non-central, non-scaled heavy tail Lambert W $\times$ $F_X$} RV with parameter vector $\theta = \left( \boldsymbol \beta, \delta \right)$, where $\delta$ is the tail parameter.
\end{definition}
Tukey's $h$ distribution results for $U$ being a standard Gaussian $\mathcal{N}(0, 1)$.

\begin{definition} 
For a continuous location-scale family RV $X \sim F_X( x \mid \boldsymbol \beta)$ define a \emph{location-scale heavy-tailed Lambert W $\times$ $F_X$} RV
\begin{equation}
\label{eq:normalized_LambertW_Y}
Y = \left\{ U \exp \left(\frac{\delta}{2} U^2 \right) \right\} \sigma_x + \mu_x, \quad \delta \in \R,
\end{equation}
with parameter vector $\theta = (\boldsymbol \beta, \delta)$, where $U = (X - \mu_x)/\sigma_x$.
\end{definition}

The input is not necessarily Gaussian but can be any other location-scale continuous RV, e.g., from a uniform distribution: $X \sim U(a,b)$. 
\begin{definition} 
Let $X \sim F_X(x/s \mid \boldsymbol \beta)$ be a continuous scale-family RV, with standard deviation $\sigma_x$; let $U = X/\sigma_x$. Then,
\begin{equation}
\label{eq:non_central_trafo}
Y = X \exp \left(\frac{\delta}{2} U^2 \right), \quad \delta \in \R,
\end{equation}
is a \emph{scaled heavy-tailed Lambert W $\times$ $F_X$} RV with parameter $\theta = \left( \boldsymbol \beta, \delta \right)$.
\end{definition}

Let $\tau := \left( \mu_x(\boldsymbol \beta), \sigma_x(\boldsymbol \beta), \delta \right)$ define transformation \eqref{eq:normalized_LambertW_Y}.\footnote{For non-central, non-scale input set $\tau = (0,1,\delta)$; for scale-family input $\tau = (0, \sigma_x, \delta)$.}  If $X \in (-\infty, \infty)$, then for all $\delta \geq 0$ also the location-scale $Y \in (-\infty, \infty)$. For a scale family $X \in [0, \infty)$ also the scale Lambert W $\times$ $F_X$ RV $Y \in [0, \infty)$.\\

The shape parameter $\delta$ ($=$ Tukey's $h$) governs the tail behavior of $Y$: for $\delta > 0$ values further away from $\mu_x$ are increasingly emphasized, leading to a heavy-tailed version of $F_X(x)$; for $\delta = 0$, $Y \equiv X$; and for $\delta < 0$ values far away from the mean are mapped back again closer to $\mu_x$. Thus heavy tail Lambert W $\times$ $F_X$ RVs generalize $X \sim F_X(x)$ to heavy-tailed versions of itself, $Y \sim G_Y(y)$, with a reduction to $X$ for $\delta = 0$. 

The Lambert W formulation of heavy tail modeling is more general than Tukey's $h$ distribution as $X$ can have any distribution $F_X(x)$, not necessarily Gaussian (Fig.\ \ref{fig:LambertW_densities}). 

\begin{remark}[Only non-negative $\delta$]
Although $\delta < 0$ leads to interesting properties of $Y$, it yields a non-bijective transformation and thus to parameter-dependent support and non-unique input. Thus for the remainder of this work I tacitly assume $\delta \geq 0$, unless stated otherwise.
\end{remark}

\subsection{Inverse Transformation: ``Gaussianize'' Heavy-Tailed Data}
Transformation \eqref{eq:normalized_LambertW_Y} is bijective and its inverse can be obtained via Lambert's $W$ function, which is the inverse of $z = u \exp(u)$, i.e., that function which satisfies $W(z) \exp(W(z)) = z$. Lambert's $W$ has been studied extensively in mathematics, physics, and other areas of science \citep{Rosenlicht69, Corlessetal96, ValluriJeffreyCorless00}, and is implemented in the GNU Scientific Library (GSL) \citep{GSL11_Manual}. Only very recently it received attention in the statistics literature \citep{Jodra09_LambertWforGompertz_Makeham,RathieSilva11_ApplicationsLambertW,GMGLambertW_Skewed, Pakes11_LambertW}. It has many useful properties (see Appendix \ref{sec:Auxiliary} and \citet{Corlessetal96}), in particular $W(z)$ is bijective for $z \geq 0$.

\begin{lemma}
\label{lem:inverse_trafo_h}
The inverse transformation of \eqref{eq:normalized_LambertW_Y} is
\begin{equation}
\label{eq:backtrafo_normalized_X}
W_{\tau}(Y) := W_{\delta}\left(\frac{Y - \mu_x}{\sigma_x}\right) \sigma_x + \mu_x = U \sigma_x + \mu_x = X,
\end{equation}
where
\begin{equation}
\label{eq:W_delta}
W_{\delta}(z) := \operatorname{sgn}\left( z \right)  \left( \frac{W\left(\delta z^2\right)}{\delta} \right)^{1/2},
\end{equation}
and $\operatorname{sgn}(z)$ is the sign of $z$. $W_{\delta}(z)$ is bijective for all $\delta \geq 0$ and all $z \in \R$.
\end{lemma}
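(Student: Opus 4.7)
The plan is to first reduce the statement to inverting the core map $H_\delta(u) = u\exp(\delta u^2/2)$, then to derive the closed form by a direct algebraic manipulation that reveals a Lambert $W$ equation, and finally to verify bijectivity of $H_\delta$ (whence of $W_\delta$) by monotonicity. Since $Y \mapsto (Y-\mu_x)/\sigma_x$ is itself a bijection from $\R$ to $\R$, it suffices to prove that $H_\delta : \R \to \R$ is a bijection with inverse $W_\delta$ given by \eqref{eq:W_delta}; the formula \eqref{eq:backtrafo_normalized_X} then follows by composing with the affine bijection.

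The derivation of the inverse is the heart of the lemma. Writing $z = H_\delta(u) = u\exp(\delta u^2/2)$, the key observation is that $\exp(\delta u^2/2) > 0$ so $\operatorname{sgn}(z) = \operatorname{sgn}(u)$. Squaring both sides and multiplying by $\delta$ gives
\begin{equation*}
\delta z^2 = (\delta u^2)\exp(\delta u^2).
\end{equation*}
Setting $w := \delta u^2 \geq 0$, this is precisely $w e^w = \delta z^2$, the defining equation of Lambert's $W$. Since $\delta z^2 \geq 0$ we are in the domain of the principal branch $W_0$, on which $W$ is a bijection from $[0,\infty)$ to $[0,\infty)$, so $w = W(\delta z^2)$ is the unique non-negative solution. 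Therefore $u^2 = W(\delta z^2)/\delta$, and restoring the sign via $\operatorname{sgn}(u) = \operatorname{sgn}(z)$ yields $u = \operatorname{sgn}(z)(W(\delta z^2)/\delta)^{1/2}$, which is exactly $W_\delta(z)$. The $\delta = 0$ case is handled as a limit: since $W(x) = x - x^2 + O(x^3)$ near the origin, $W(\delta z^2)/\delta \to z^2$, so $W_0(z) = z$, consistent with $H_0 = \mathrm{id}$.

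For bijectivity on all of $\R$, differentiate:
\begin{equation*}
H_\delta'(u) = (1+\delta u^2)\exp(\delta u^2/2) > 0 \quad \text{for all } u \in \R \text{ and } \delta \geq 0,
\end{equation*}
so $H_\delta$ is strictly increasing. Combined with continuity, $H_\delta(0)=0$, and $H_\delta(u)\to \pm\infty$ as $u\to\pm\infty$, this makes $H_\delta:\R\to\R$ a bijection, and hence so is its two-sided inverse $W_\delta$. The only subtle step is the branch choice for Lambert $W$: one must argue that restricting to $w \geq 0$ is forced (rather than a convention) because $\delta u^2 \geq 0$, which rules out the secondary branch $W_{-1}$ and pins down uniqueness.

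The main potential obstacle is the careful bookkeeping around $\operatorname{sgn}(z)$ and the branch of $W$, since squaring is not invertible in general; however, the sign-preservation of $H_\delta$ and the non-negativity of $\delta z^2$ together eliminate any ambiguity. Everything else is a one-line verification that $W_\delta(H_\delta(u)) = u$ and $H_\delta(W_\delta(z)) = z$, which can be inserted if a fully explicit two-sided check is desired.
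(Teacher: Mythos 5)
Your proof is correct and follows essentially the same route as the paper: square $z = u\exp(\delta u^2/2)$, multiply by $\delta$ to obtain the Lambert $W$ equation $\delta u^2 e^{\delta u^2} = \delta z^2$, invert on the principal branch, and recover the sign from the fact that $\exp(\delta u^2/2) > 0$ forces $\sgn{z} = \sgn{u}$. Your additional monotonicity check $H_\delta'(u) = (1+\delta u^2)\exp(\delta u^2/2) > 0$ and the $\delta = 0$ limit are welcome supplements that the paper leaves implicit, but they do not change the argument.
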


Lemma \ref{lem:inverse_trafo_h} gives for the first time an analytic, bijective inverse of Tukey's $h$ transformation: $H_{\delta}^{-1}(y)$ of \citet{MorgenthalerTukey00_FittingQuantiles} is now analytically available as \eqref{eq:backtrafo_normalized_X}. Bijectivity implies that for any data $\mathbf{y}$ and parameter $\tau$, the exact input $\mathbf{x}_{\tau} = W_{\tau}(\mathbf{y}) \sim F_X(x)$ can be obtained. 

In view of the importance and popularity of Gaussianity, we clearly want to back-transform heavy-tailed data to a Gaussian rather than yet another heavy-tailed distribution. Typically tail behavior of RVs are compared by their kurtosis $\gamma_2(X) = \E (X - \mu_x)^4 / \sigma_x^4$, which for a Gaussian RV equals $3$. Hence for the future when we ``normalize $\mathbf{y}$'' we can not only subtract the mean, and divide by the standard deviation, but also transform it to $\mathbf{x}_{\tau}$ with $\widehat{\gamma}_2\left(\mathbf{x}_{\tau} \right) = 3$ -- a ``Normalization'' in the true sense of the word (see Fig.\ \ref{fig:Gaussianized_Cauchy_sample}). 

This data-driven view of the Lambert W framework can also be useful for kernel density estimation (KDE), where multivariate data is often pre-scaled to unit-variance, so the same bandwidth can be used in each dimension \citep{Wasserman07_Nonparametric, Hwang94_nonparametricmultivariate}. Thus ``normalizing'' the Lambert Way might likely also improve KDE for heavy-tailed data \citep[see also][]{Markovich05_KDE_HeavyTail, Maiboroda04_KDE_HeavyTail}.

\begin{corollary}[Inverse transformation for Tukey's $hh$]
\label{cor:inverse_trafo_hh}
The inverse transformation of \eqref{eq:trafo_hh} is
\begin{equation}
\label{eq:backtrafo_hh}
W_{\delta_{\ell}, \delta_r}(z) =
 \begin{cases}
W_{\delta_{\ell}}(z), & \text{if } z \leq 0, \\
W_{\delta_r}(z),  & \text{if } z > 0.
\end{cases}
\end{equation}
\end{corollary}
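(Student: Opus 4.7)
The plan is to reduce the claim to Lemma \ref{lem:inverse_trafo_h} applied separately on each half-line. First I would note that for any $\delta \geq 0$ the factor $\exp(\frac{\delta}{2} u^2)$ is strictly positive, so the map $H_{\delta}(u) = u \exp(\frac{\delta}{2} u^2)$ preserves sign: $\operatorname{sgn}(H_\delta(u)) = \operatorname{sgn}(u)$. Applied to \eqref{eq:trafo_hh} this means $\operatorname{sgn}(Z) = \operatorname{sgn}(U)$ on each branch, so the sign of an observed $z$ reveals which branch generated it: $z \leq 0$ forces $U \leq 0$ and hence $Z = H_{\delta_{\ell}}(U)$, whereas $z > 0$ forces $U > 0$ and hence $Z = H_{\delta_r}(U)$.

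Next I would invoke Lemma \ref{lem:inverse_trafo_h} on each half-line separately. On $\{z \leq 0\}$, solving $z = H_{\delta_{\ell}}(u)$ for $u$ yields $u = W_{\delta_{\ell}}(z)$, and on $\{z > 0\}$, solving $z = H_{\delta_r}(u)$ yields $u = W_{\delta_r}(z)$. Pasting these two pieces together along the split at the origin gives precisely the formula \eqref{eq:backtrafo_hh}. One small verification I would record is that the two pieces agree at $z = 0$: the $\operatorname{sgn}(z)$ factor in \eqref{eq:W_delta} together with $W(0) = 0$ gives $W_{\delta_{\ell}}(0) = 0 = W_{\delta_r}(0)$, so the piecewise definition is continuous and unambiguous at the join.

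Finally I would address bijectivity. Lemma \ref{lem:inverse_trafo_h} already asserts that each $W_{\delta_j}$ is a bijection on $\R$; because each $W_{\delta_j}$ is sign-preserving, it restricts to a bijection of $\{z \leq 0\}$ onto itself and of $\{z > 0\}$ onto itself. Since the two half-lines partition $\R$, gluing the two restrictions produces a bijection $W_{\delta_{\ell},\delta_r}\colon \R \to \R$ whose composition with \eqref{eq:trafo_hh} is the identity on either side. There is no substantive obstacle here; the only thing demanding care is the sign bookkeeping at $z = 0$, and this is already handled by the explicit $\operatorname{sgn}$ factor in the definition \eqref{eq:W_delta}.
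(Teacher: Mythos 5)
Your proof is correct and follows the route the paper itself intends: the paper states this corollary without any proof, treating it as an immediate branch-wise application of Lemma \ref{lem:inverse_trafo_h}, and your write-up supplies exactly the missing justification (sign preservation of $H_\delta$ identifies the branch, the lemma inverts each branch, and the two pieces agree at $z=0$). Nothing to add or correct.
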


Figure \ref{fig:W_2delta} shows $W_{\delta_{\ell}, \delta_r}(z)$ for $\delta_l = 0$ and $\delta_r = 1/10$. The transformation in Fig.\ \ref{fig:G_2delta} generates a right heavy tail version of $U$ (x-axis) by stretching only the positive axis (y-axis). By definition $W_{\delta_{\ell}, \delta_r}(z)$ removes the heavier right tail in $Z$ (positive y-axis).  Figure \ref{fig:W_delta_z_contour} shows how $W_{\delta}(z)$ operates for various degrees of heavy tails and $z \in [0,3]$. If $\delta$ is close to zero, then also $W_{\delta}(z) \approx z$; for larger $\delta$, the inverse maps $z$ to (much) smaller $u$.

\begin{remark}[Generalized transformation]
Transformation \eqref{eq:Tukey_h} can be generalized to 
\begin{equation}
\label{eq:delta_alpha_trafo}
Z = U \exp \left(\frac{\delta}{2 \alpha}  \left( U^2 \right)^{\alpha} \right), \quad \alpha > 0.
\end{equation}
The inner term $U^2$ guarantees bijectivity for all $\alpha > 0$. The inverse is
\begin{equation}
W_{\delta, \alpha}(z) := \operatorname{sgn}(z) \left( W\left( \frac{\delta z^{2 \alpha}}{\delta} \right) \right)^{\frac{1}{2 \alpha}}.
\end{equation}

For comparison with Tukey's $h$ I consider $\alpha = 1$ only. For $\alpha = 1/2$ transformation \eqref{eq:delta_alpha_trafo} is closely related to skewed Lambert W $\times$ $F_X$ distributions.
\end{remark}

\subsection{Distribution and Density}
\label{sec:dist_dens}
For ease of notation let 
\begin{align}
\label{eq:z_u_x_defs}
z = \frac{y-\mu_x}{\sigma_x}, \quad u = W_{\delta}(z), \text{ and }  x = W_{\tau}(y) = u \sigma_x + \mu_x.
\end{align}

\begin{theorem}[Distribution and Density of $Y$]
\label{theorem:cdf_Y}
The cdf and pdf of a location-scale heavy tail Lambert W $\times$ $F_X$ RV $Y$ equal 
\begin{eqnarray}
\label{eq:cdf_LambertW_Y}
G_Y\left(y \mid \boldsymbol \beta, \delta \right) = F_X\left(W_{\delta}(z) \sigma_x + \mu_x \mid \boldsymbol \beta \right)
\end{eqnarray}
and
\begin{align}
\label{eq:pdf_LambertW_Y}
g_Y\left(y \mid \boldsymbol \beta, \delta \right) 
& =  f_X\left( W_{\delta}\left(  \frac{y - \mu_x}{\sigma_x} \right) \sigma_x + \mu_x \mid \boldsymbol \beta \right) \cdot \frac{W_{\delta}\left( \frac{y - \mu_x}{\sigma_x}  \right)}{ \frac{y - \mu_x}{\sigma_x}  \left[ 1 + W\left( \delta \left( \frac{y - \mu_x}{\sigma_x} \right)^2 \right) \right]} .
\end{align}

Clearly, $G_Y\left(y \mid \boldsymbol \beta, \delta = 0 \right) = F_X \left( y \mid \boldsymbol \beta \right)$ and $g_Y\left(y \mid \boldsymbol \beta, \delta = 0 \right)  = f_X\left(y \mid \boldsymbol \beta \right) $, since $\lim_{\delta \rightarrow 0}W_{\delta}(z) = z$ and $\lim_{\delta \rightarrow 0} W(\delta z^2) = 0$ for all $z \in \R$.

For scale family or non-central, non-scale input set $\mu_x = 0$ or $\mu_x = 0, \sigma_x = 1$.
\end{theorem}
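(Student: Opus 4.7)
The plan is a standard change-of-variables argument exploiting the bijectivity of $H_\tau$ established in Lemma \ref{lem:inverse_trafo_h}. First I would tackle the cdf. Since $H_\delta(u) = u \exp(\delta u^2/2)$ has derivative $(1+\delta u^2)\exp(\delta u^2/2) > 0$ for $\delta \geq 0$, the mapping $y = H_\tau(x)$ is strictly increasing, so
\begin{equation*}
G_Y(y) = \Prob(Y \le y) = \Prob\!\left( H_\tau(X) \le y \right) = \Prob\!\left( X \le W_\tau(y) \right) = F_X\!\left( W_\delta(z) \sigma_x + \mu_x \mid \boldsymbol\beta \right),
\end{equation*}
which is exactly \eqref{eq:cdf_LambertW_Y}. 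The limiting statement $G_Y(y \mid \boldsymbol\beta, 0) = F_X(y \mid \boldsymbol\beta)$ follows from $\lim_{\delta \to 0} W(\delta z^2) = 0$ and hence $\lim_{\delta \to 0} W_\delta(z) = z$, combined with the chain rule to take the limit through $F_X$.

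For the pdf I would differentiate the cdf via the chain rule, giving $g_Y(y) = f_X(W_\tau(y)\mid \boldsymbol\beta) \cdot W_\delta'(z) \cdot (1/\sigma_x) \cdot \sigma_x = f_X(W_\tau(y)\mid \boldsymbol\beta) \cdot W_\delta'(z)$. The main (and essentially only) computation is thus $W_\delta'(z)$. Rather than differentiating the explicit form \eqref{eq:W_delta} directly (which introduces messy chain-rule/absolute value bookkeeping), I would use implicit differentiation: setting $u := W_\delta(z)$, the defining equation $z = u \exp(\delta u^2 / 2)$ differentiates to
\begin{equation*}
1 = \frac{du}{dz} \exp\!\left( \tfrac{\delta}{2} u^2 \right) \left( 1 + \delta u^2 \right).
\end{equation*}
Substituting $\exp(\delta u^2/2) = z/u$ from the defining identity yields $W_\delta'(z) = u/[z(1+\delta u^2)]$. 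Finally, from \eqref{eq:W_delta} we have $\delta u^2 = \delta \cdot W(\delta z^2)/\delta = W(\delta z^2)$, which gives
\begin{equation*}
W_\delta'(z) = \frac{W_\delta(z)}{z\left[ 1 + W(\delta z^2) \right]},
\end{equation*}
matching the first form of $g_Y$ in \eqref{eq:pdf_LambertW_Y}. The second form is obtained by rewriting the ratio $W_\delta(z)/z$ using the hint in the statement: from $z = W_\delta(z) \exp(\delta W_\delta(z)^2/2)$ we read off $W_\delta(z)/z = \exp(-\delta W_\delta(z)^2/2)$, and substitution produces the exponential factor.

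The only real subtlety is the behaviour at $z = 0$, where both the numerator $W_\delta(z)$ and the denominator $z$ vanish, and where the piecewise $\operatorname{sgn}$ in \eqref{eq:W_delta} could cause concern about differentiability. I would handle this either by a direct Taylor expansion ($W(\delta z^2) = \delta z^2 + O(z^4)$ near $0$, so $W_\delta(z) = z + O(z^3)$ and $W_\delta'(0) = 1$ on both sides), or equivalently by noting that $H_\delta$ is smooth with $H_\delta'(0) = 1 \ne 0$, so its inverse is smooth at $0$ by the inverse function theorem; hence the formula extends by continuity. No other obstacle should arise, since Lambert's $W$ is analytic on the relevant branch. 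The analogous result for Tukey's $hh$ (Corollary \ref{cor:inverse_trafo_hh}) follows by applying the same argument piecewise on $\{y \leq \mu_x\}$ and $\{y > \mu_x\}$.
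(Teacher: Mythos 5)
Your proof is correct, and its skeleton (cdf via monotonicity of $H_\tau$, pdf via the chain rule) matches the paper's. The one place where you genuinely diverge is the computation of $\frac{d}{dz}W_\delta(z)$: the paper devotes Lemma \ref{lem:derivative_W_delta_wrt_z} to differentiating the explicit representation $\sgn{z}\left(W(\delta z^2)/\delta\right)^{1/2}$, which requires invoking the derivative identity $W'(z) = \frac{W(z)}{z(1+W(z))}$ for Lambert's $W$ and then some chain-rule bookkeeping with the square root, whereas you implicitly differentiate the defining relation $z = u\exp(\delta u^2/2)$ and read off $W_\delta'(z) = u/[z(1+\delta u^2)]$ from the inverse function theorem. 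Your route is more elementary (it never needs the Lambert $W$ derivative formula), it sidesteps the $\sgn{z}$ bookkeeping entirely, and it delivers the correct positive sign directly --- note that the paper's Lemma \ref{lem:derivative_W_delta_wrt_z} as \emph{stated} carries a spurious minus sign in its first expression even though the derivation inside its proof produces the positive quantity actually used in \eqref{eq:pdf_LambertW_Y}. The paper's route, on the other hand, keeps everything expressed in terms of $W(\delta z^2)$, which is the form reused later in the likelihood calculations (Lemmas \ref{lem:derivative_W_delta_square}--\ref{lem:deriv_Gaussian_loglik}). Your extra care at $z=0$ and the explicit appeal to smoothness of $H_\delta$ with $H_\delta'(0)=1$ is a point the paper glosses over; it is a welcome addition rather than a necessity.
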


The explicit formula \eqref{eq:pdf_LambertW_Y} allows a fast computation and theoretical analysis of the likelihood, which is essential for -- either frequentist or Bayesian -- statistical inference. Detailed properties of \eqref{eq:pdf_LambertW_Y} are given in Section \ref{sec:MLE}.\\

Figure \ref{fig:LambertW_densities} shows \eqref{eq:cdf_LambertW_Y} and \eqref{eq:pdf_LambertW_Y} for various $\delta \geq 0$ with for four different input  $X \sim F_X(x \mid \boldsymbol \beta)$: for $\delta = h = 0$ the input equals the output (solid black); for larger $\delta$ the tails of $G_Y(y \mid \theta)$ and $g_Y(y \mid \theta)$ get heavier (dashed colored).

\begin{figure}
\centering
\subfloat{\includegraphics[width=\textwidth, trim = 0 5.1cm 0 0, clip = true]{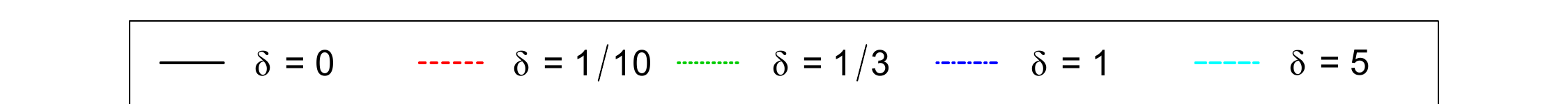}\label{fig:LambertW_pdfs_cdfs_legend_only}}

\subfloat
{\includegraphics[ width=.24\textwidth]{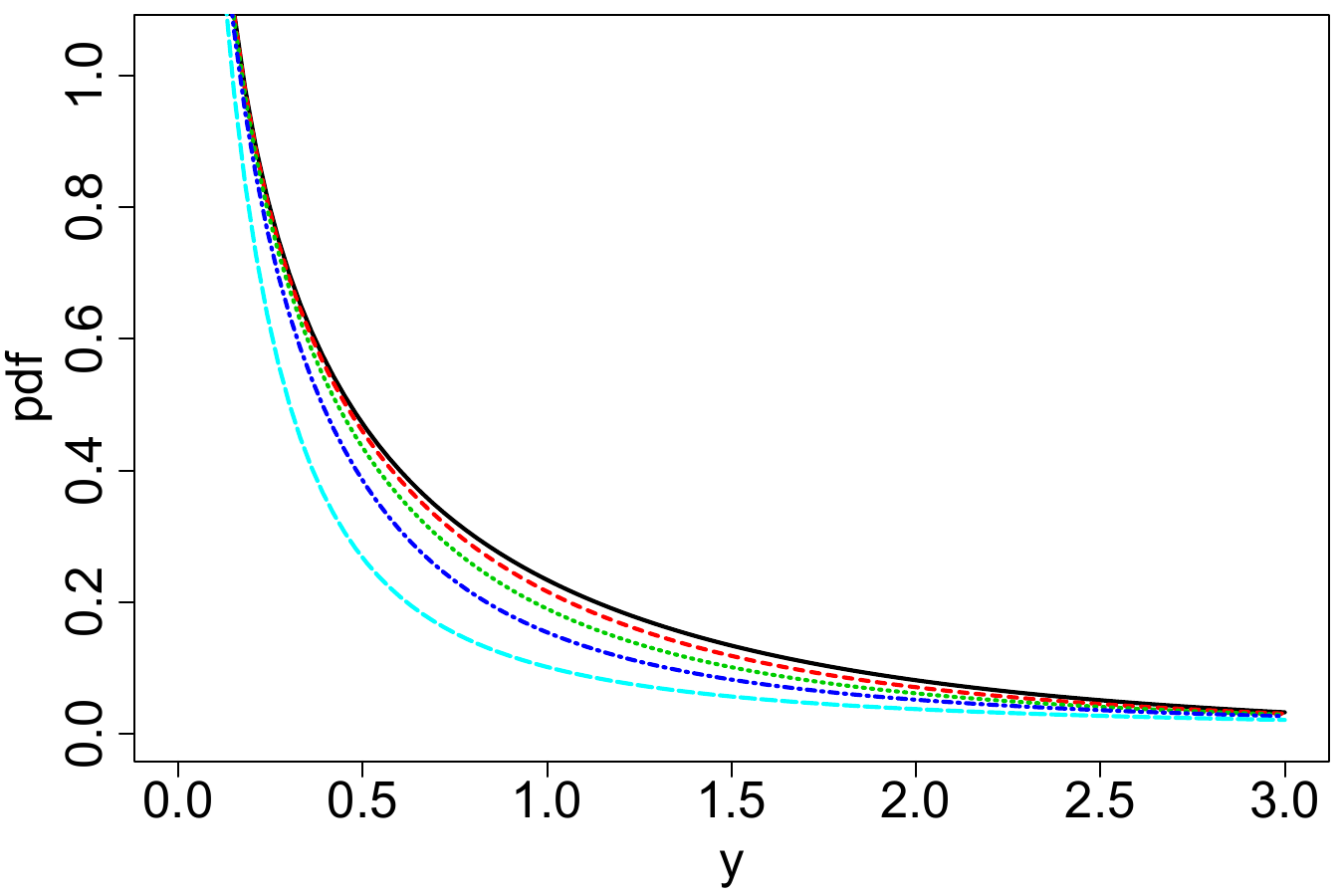}\label{fig:LambertW_chisq_density}}
\subfloat 
{\includegraphics[width=.24\textwidth]{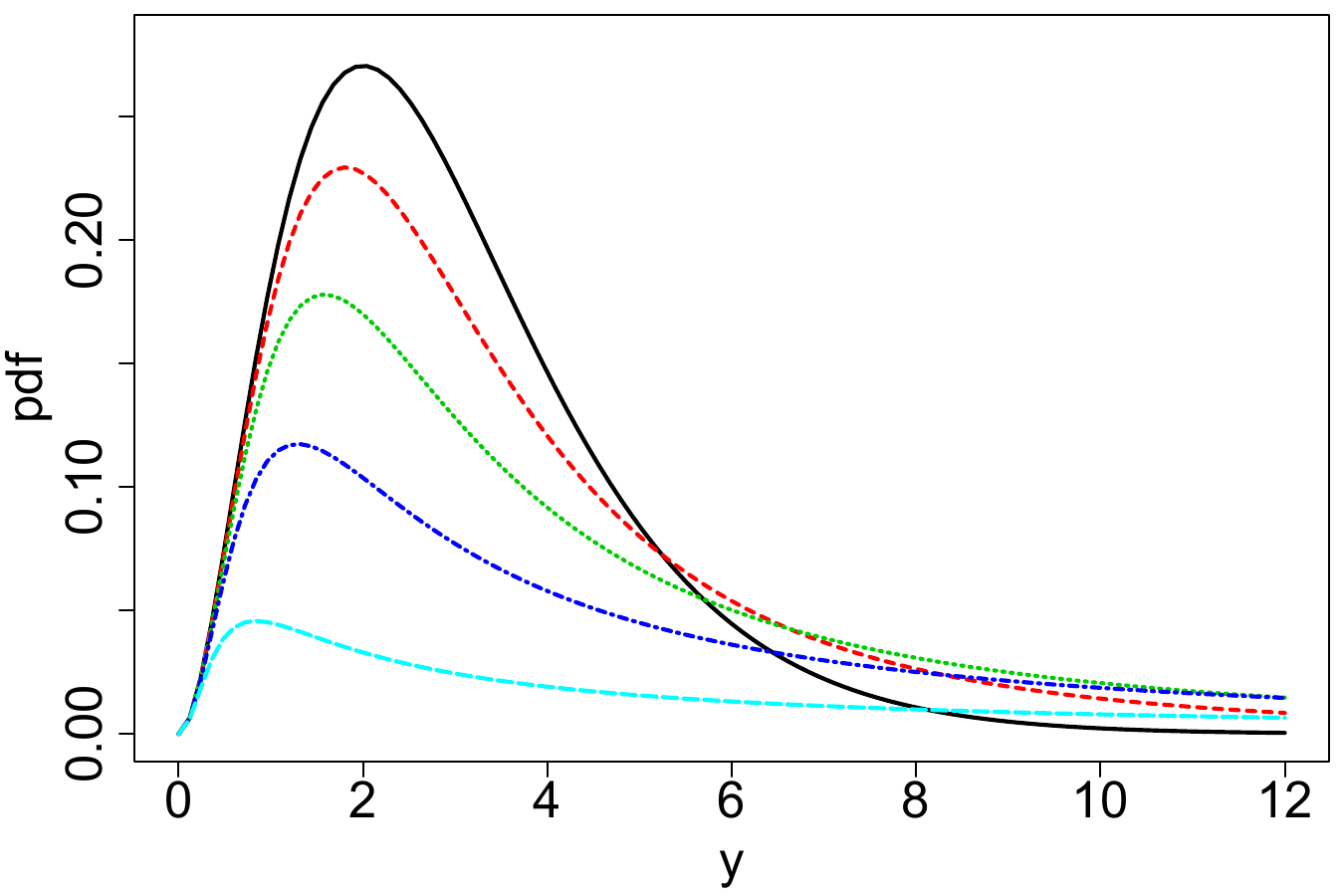}\label{fig:LambertW_exp_density}}
\subfloat 
{\includegraphics[width=.24\textwidth]{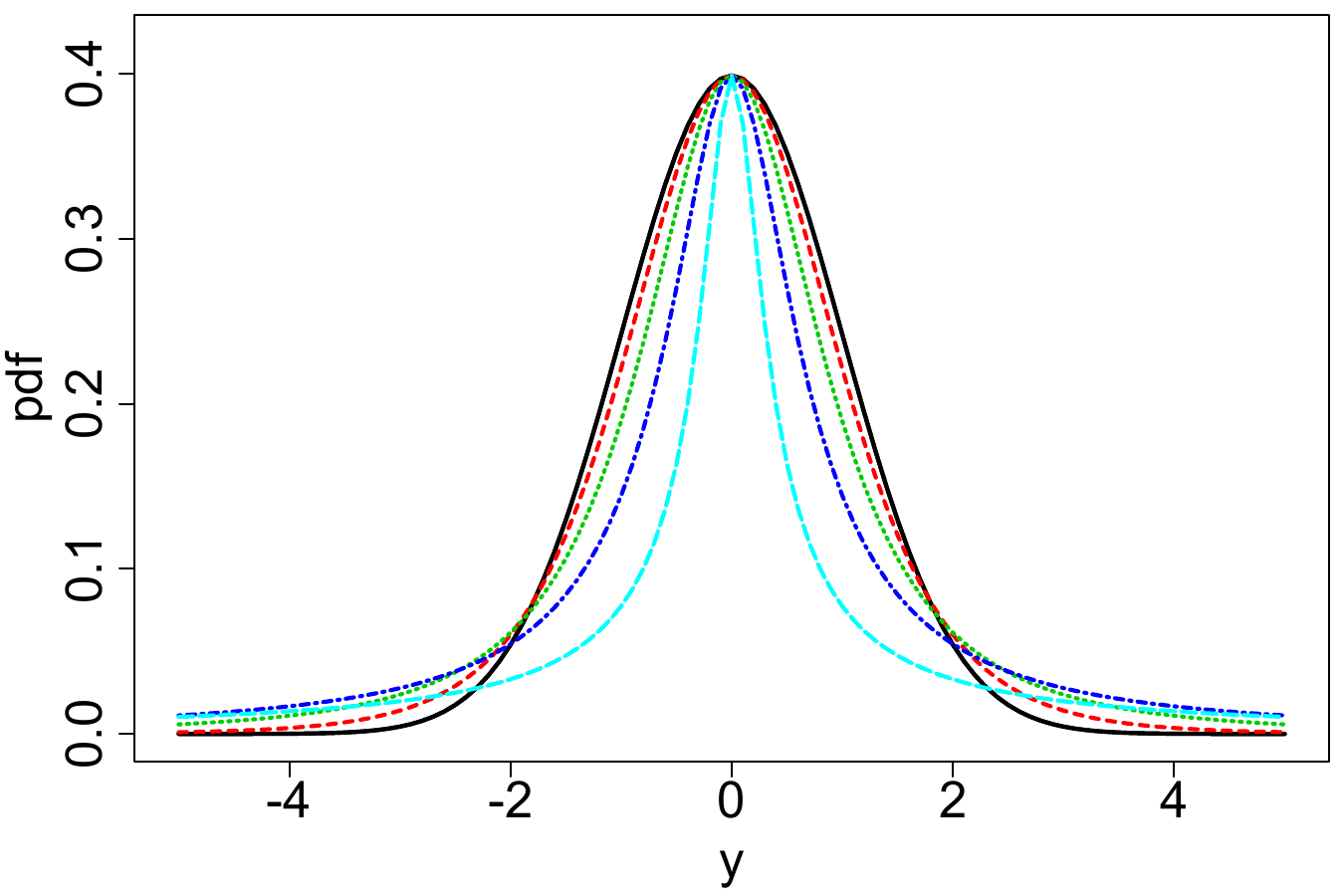}\label{fig:LambertW_normal_density}}
\subfloat 
{\includegraphics[width=.24\textwidth]{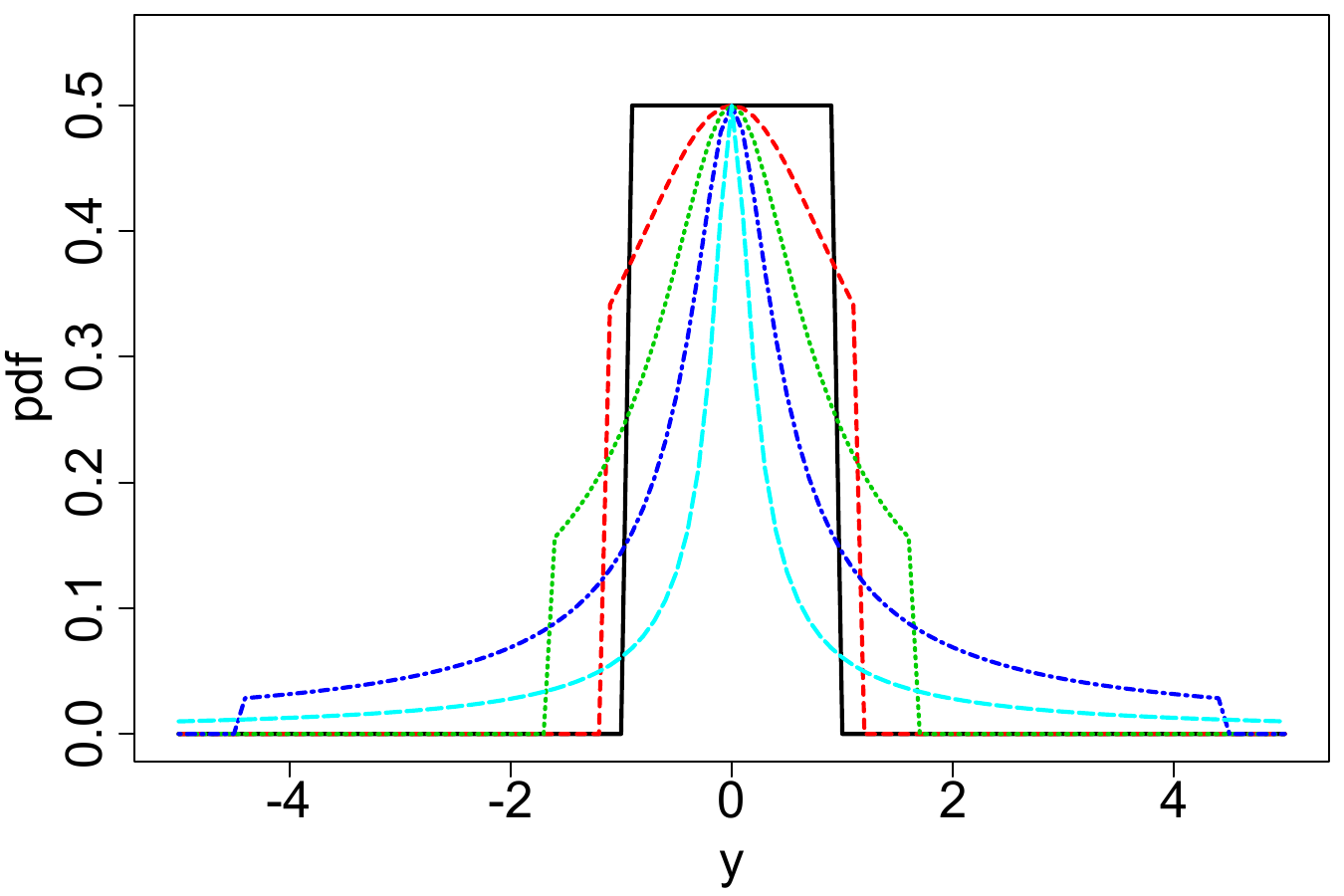}\label{fig:LambertW_unif_density}}

\setcounter{subfigure}{0}
\subfloat[Lambert W $\times$ $\chi_k^2$ \newline with $\boldsymbol \beta = k = 1$.]{\includegraphics[ width=.24\textwidth]{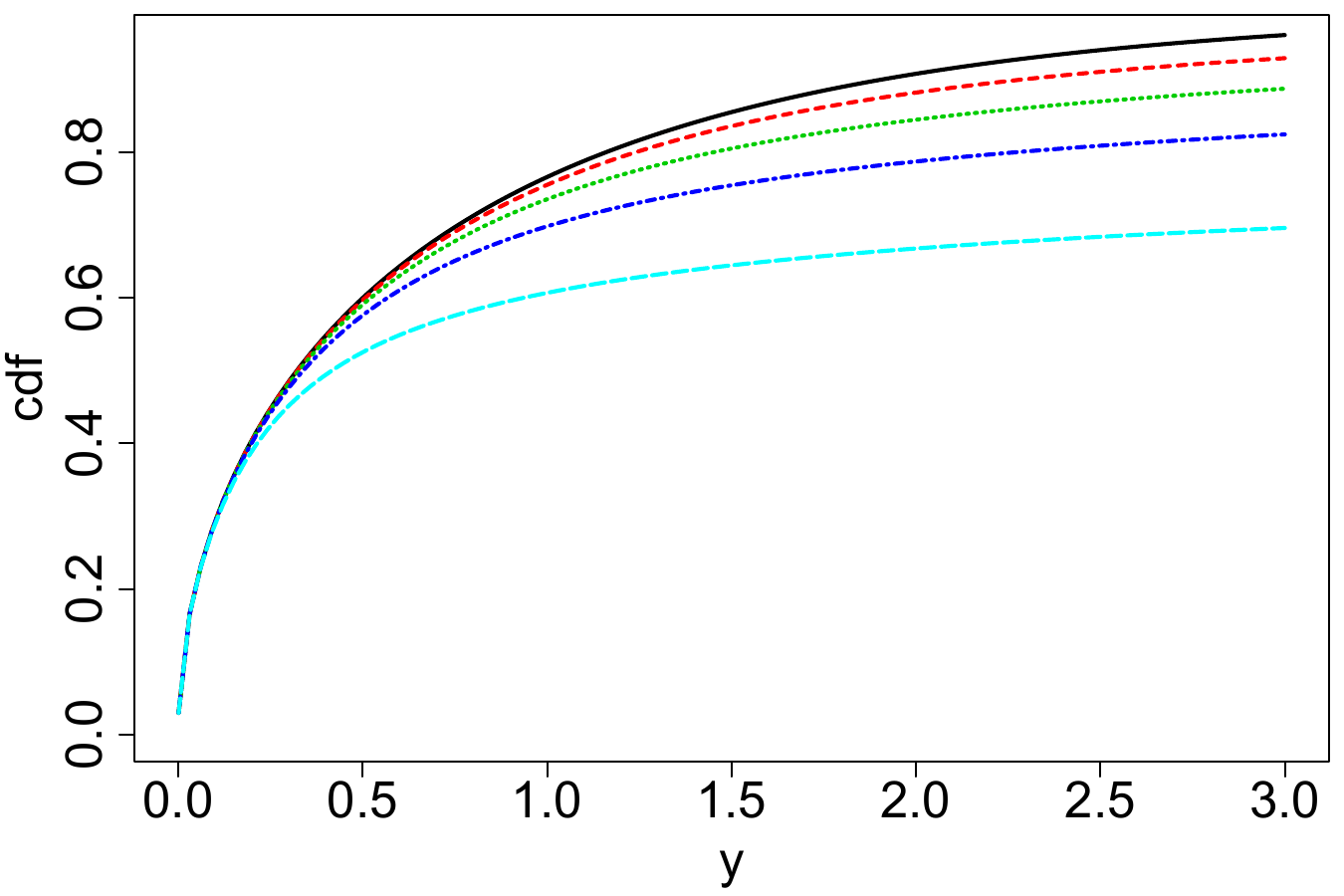}\label{fig:LambertW_chisq_cdfs}}
\subfloat[Lambert W $\times$ $\Gamma(s, r)$ \newline with $\boldsymbol \beta = (s, r) = (3,1)$.]{\includegraphics[width=.24\textwidth]{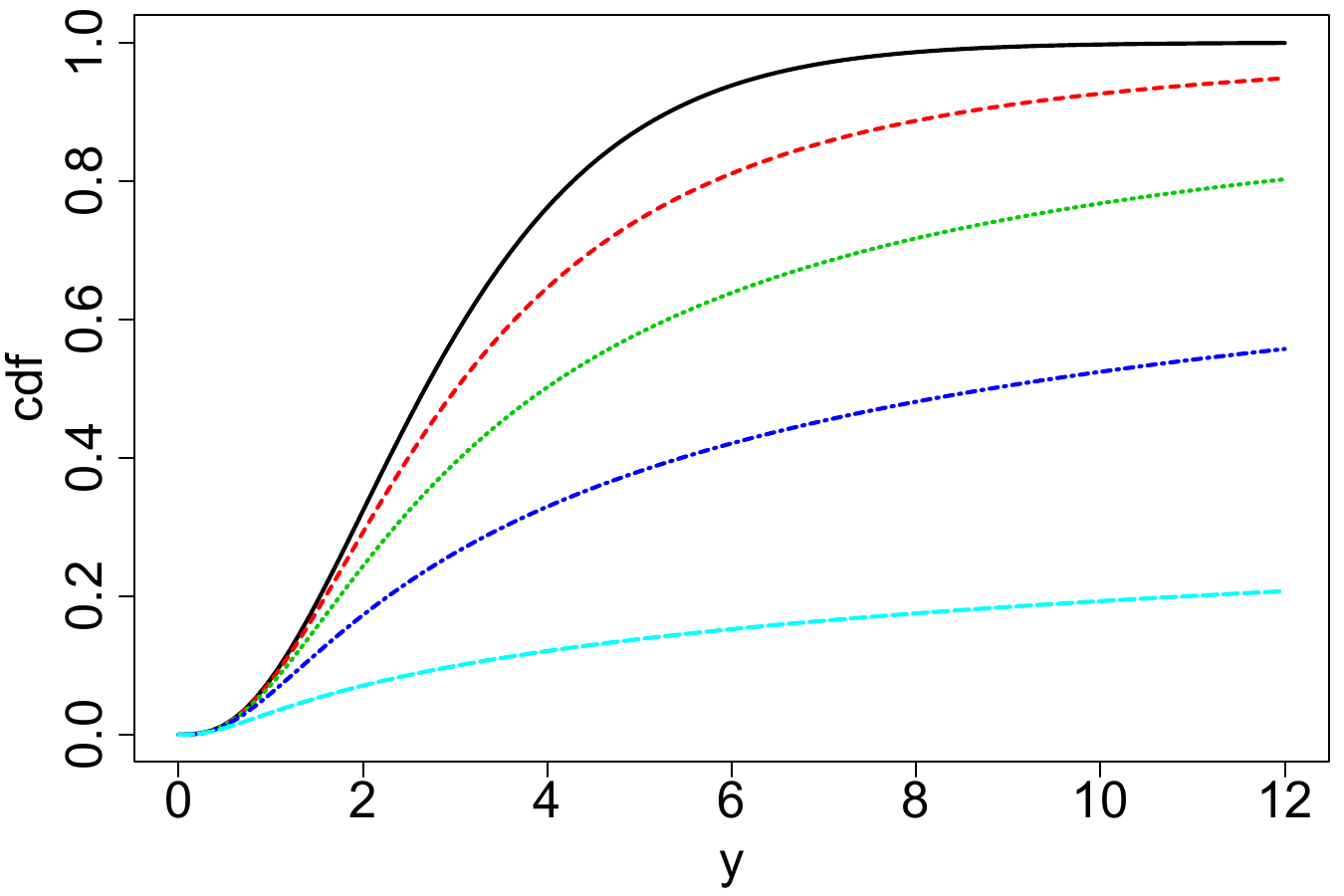}\label{fig:LambertW_exp_cdfs}}
\subfloat[Lambert W $\times$ $\mathcal{N}(\mu,\sigma^2)$ \newline with $\boldsymbol \beta = (\mu, \sigma) = (0,1)$.]{\includegraphics[width=.24\textwidth]{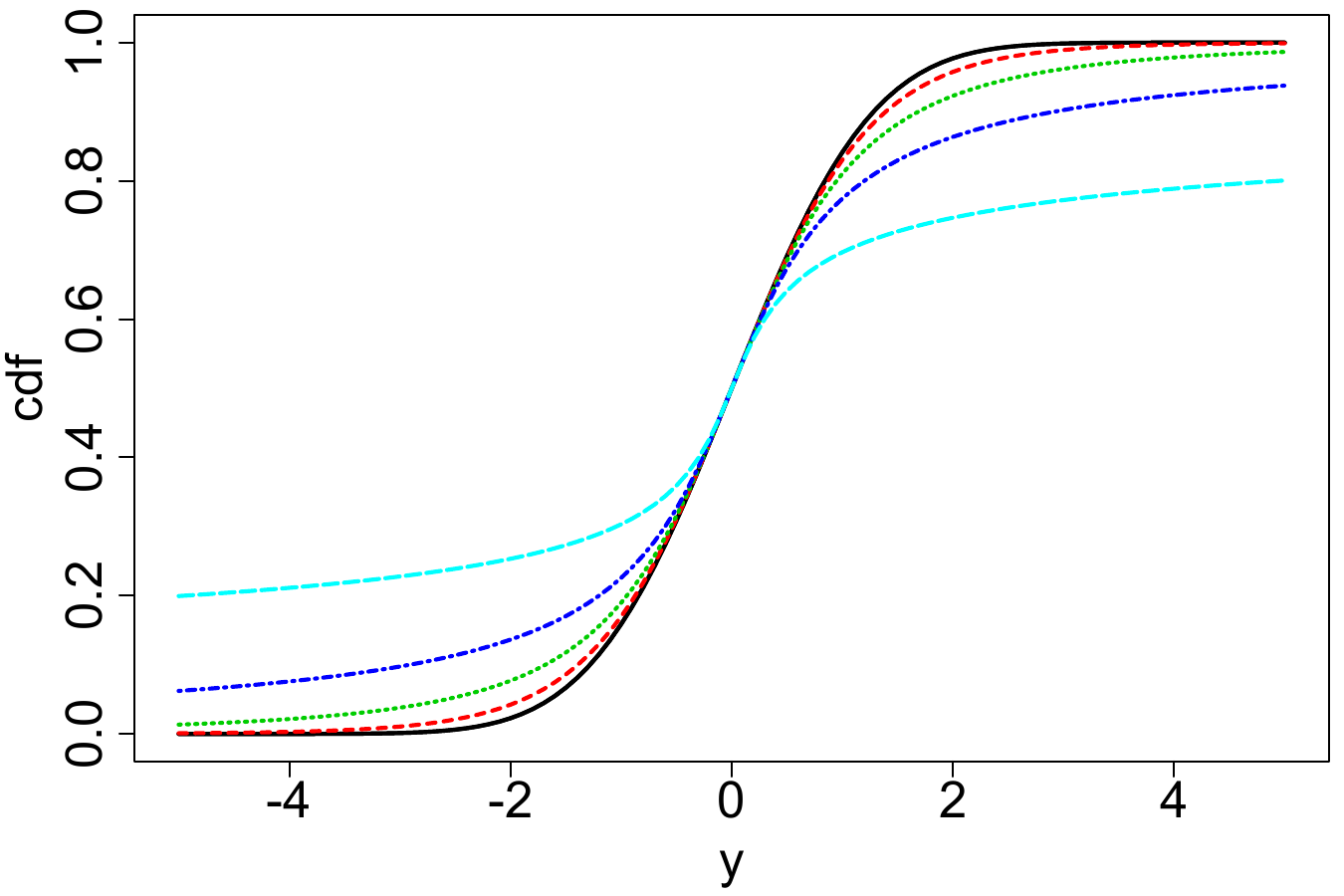}\label{fig:LambertW_normal_cdfs}}\subfloat[Lambert W $\times$ $U(a,b)$ \newline with $\boldsymbol \beta = (a, b) = (-1,1)$.]{\includegraphics[width=.24\textwidth]{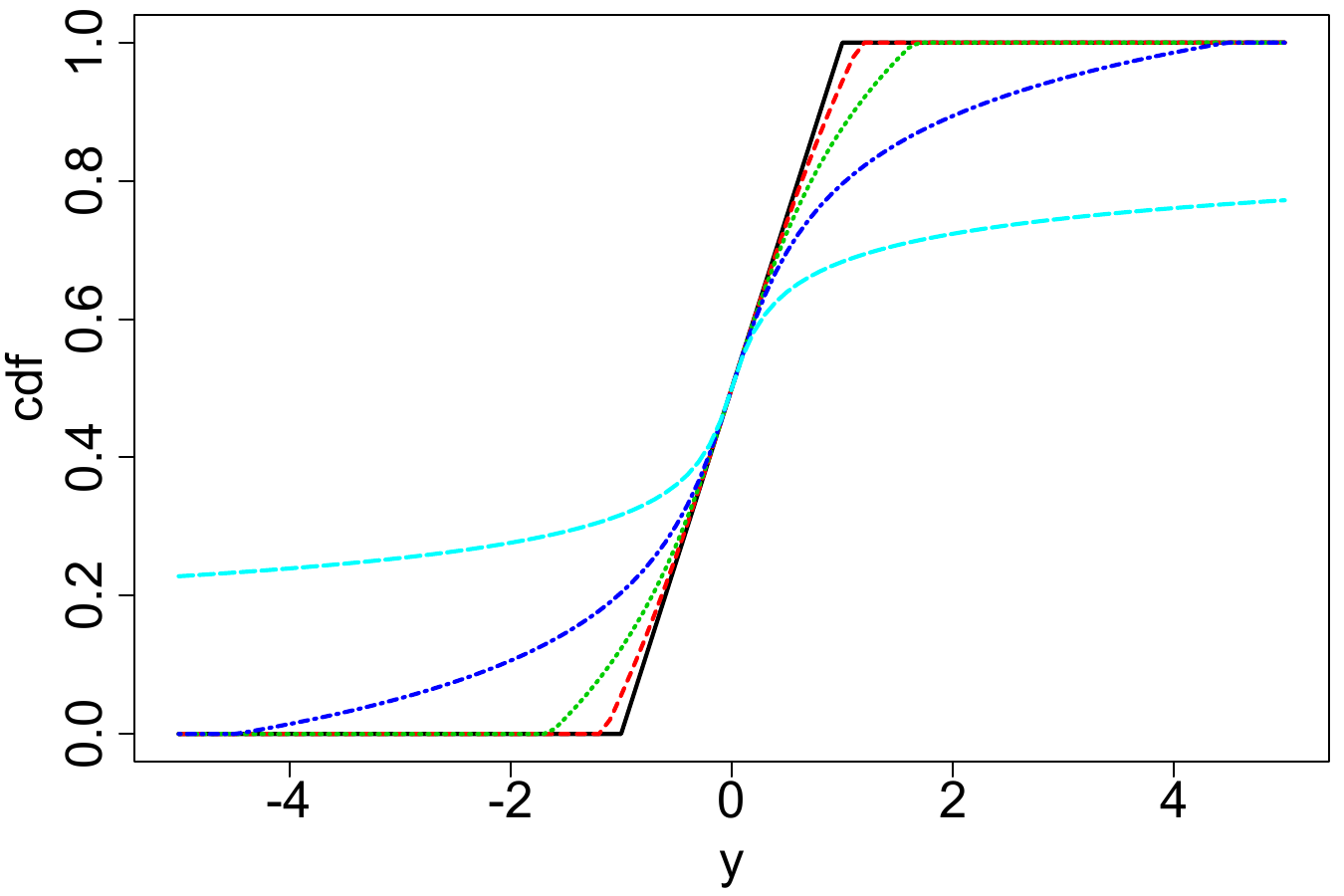}\label{fig:LambertW_unif_cdfs}}\\
\caption{Pdf (top) and cdf (bottom) of a heavy-tail (a) ``non-central, non-scaled'', (b) ``scale'', and (c and d) ``location-scale'' Lambert W $\times$ $F_X$ RV $Y$ for various degrees of heavy tails (color, dashed lines).}
\label{fig:LambertW_densities}
\end{figure} 

\begin{corollary} 
The cdf and pdf of $Z$ in \eqref{eq:trafo_hh} equal
\begin{align}
\label{eq:cdf_hh}
G_Z\left(z \mid \boldsymbol \beta, \delta_{\ell}, \delta_r \right) = 
 \begin{cases}
G_Z\left(z \mid \boldsymbol \beta, \delta_{\ell} \right), & \text{if } z \leq 0, \\
G_Z\left(z \mid \boldsymbol \beta, \delta_r \right),  & \text{if } z > 0,
\end{cases}
\end{align}
and
\begin{align}
\label{eq:pdf_hh}
g_Z\left(z \mid \boldsymbol \beta, \delta \right) =
 \begin{cases}
g_Z\left(z \mid \boldsymbol \beta, \delta_{\ell} \right), & \text{if } z \leq 0, \\
g_Z\left(z \mid \boldsymbol \beta, \delta_r \right),  & \text{if } z > 0.
\end{cases}
\end{align}
\end{corollary}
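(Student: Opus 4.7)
The approach is to exploit the fact that the per-branch transformation $H_\delta(u) = u \exp(\delta u^2/2)$ is sign-preserving: for $\delta \geq 0$ it is strictly increasing on $\R$ with $H_\delta(0)=0$, so $H_\delta(u)$ and $u$ share sign. Applied to the piecewise definition \eqref{eq:trafo_hh}, this means $\{Z \leq 0\} = \{U \leq 0\}$ and $\{Z > 0\} = \{U > 0\}$, and on each of these two events the transformation from $U$ to $Z$ is a one-parameter heavy-tail Lambert W map with parameter $\delta_\ell$ or $\delta_r$ respectively. The plan is therefore to reduce each case of \eqref{eq:cdf_hh} and \eqref{eq:pdf_hh} to the single-parameter result in Theorem \ref{theorem:cdf_Y}, applied to the relevant branch.

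For the cdf, I would split on the sign of $z$. If $z \leq 0$, then $\{Z \leq z\} \subseteq \{Z \leq 0\} = \{U \leq 0\}$, so on this event $Z = H_{\delta_\ell}(U)$ and inversion via Corollary \ref{cor:inverse_trafo_hh} gives $P(Z \leq z) = P(U \leq W_{\delta_\ell}(z))$; by Theorem \ref{theorem:cdf_Y} applied to the (symmetric) Lambert W $\times F_X$ RV with tail parameter $\delta_\ell$, this is exactly $G_Z(z \mid \boldsymbol\beta, \delta_\ell)$. If $z > 0$, then $\{Z \leq z\} = \{U \leq 0\} \cup \{0 < U \leq W_{\delta_r}(z)\}$, which equals $\{U \leq W_{\delta_r}(z)\}$ since $W_{\delta_r}(z) > 0$; this probability equals $G_Z(z \mid \boldsymbol\beta, \delta_r)$ by the same theorem.

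For the pdf I would differentiate the cdf piecewise, or, more transparently, apply the standard change of variables separately on each half-line: $Z = H_{\delta_\ell}(U)$ maps $(-\infty,0]$ bijectively onto $(-\infty,0]$ with inverse $W_{\delta_\ell}$, so for $z \leq 0$,
\begin{equation*}
g_Z(z) \;=\; f_U\bigl(W_{\delta_\ell}(z)\bigr)\, \left| \frac{\mathrm{d}}{\mathrm{d}z} W_{\delta_\ell}(z)\right|,
\end{equation*}
which by Theorem \ref{theorem:cdf_Y} equals $g_Z(z \mid \boldsymbol\beta, \delta_\ell)$; the $z>0$ case is identical with $\delta_r$ in place of $\delta_\ell$.

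The only subtlety is checking that the piecewise pdf is well-defined at the junction $z=0$. Since $W_\delta(0) = 0$ and $W(\delta \cdot 0) = 0$ for every $\delta \geq 0$, both branches of \eqref{eq:pdf_hh} evaluate to $f_U(0)$ at $z=0$ regardless of $\delta_\ell, \delta_r$, so the two pieces agree and no ambiguity arises at the boundary. I do not expect any technical obstacle beyond this bookkeeping, because the sign-preservation of $H_\delta$ makes the two branches completely decoupled and reduces everything to the already-established one-parameter case.
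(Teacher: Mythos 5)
Your proof is correct, and since the paper states this corollary without proof, your argument supplies exactly the reasoning it implicitly relies on: sign-preservation of $H_\delta$ decouples the two half-lines, reducing each branch to the single-parameter result of Theorem \ref{theorem:cdf_Y}, with the check that both pdf branches agree (equal to $f_U(0)$) at $z=0$. Nothing further is needed.
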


\subsection{Quantile Function}
\label{subsec:quantile_general}
Quantile fitting has been the standard technique to estimate $\mu_x$, $\sigma_x$, and $\delta$ of Tukey's $h$. In particular, the median of $Y$ and $X$ are equal. Thus for symmetric, location-scale family input the sample median of $\mathbf{y}$ is a robust estimate for $\mu_x$ for any $\delta \geq 0$ (see also Section \ref{sec:simulations}). General quantiles can be computed via \citep{Hoaglin06}
\begin{equation}
\label{eq:quantile_function}
y_{\alpha} = u_{\alpha} \exp \left( \frac{\delta}{2} u_{\alpha}^2 \right) \sigma_x + \mu_x,
\end{equation}
where $u_{\alpha} = W_{\delta}(z_{\alpha})$ are the $\alpha$-quantiles of $F_U(u)$. As quantiles of $U$ are typically tabulated, or easily available in software packages, \eqref{eq:quantile_function} can be computed very efficiently using $u_{\alpha}$ and $\tau$.\\

This simple conversion can be especially useful for education: teaching heavy-tailed statistics in introductory courses soon becomes too difficult using e.g., Cauchy or $\alpha$-stable distributions. Yet, transforming data via Lambert's $W$, using previously learned methods for the Gaussian case, and then transforming the inference back to the ``heavy-tailed world'' -- e.g., transforming quantiles using \eqref{eq:quantile_function} -- is straightforward. Thus the Lambert W $\times$ $F_X$ framework can promote heavy-tailed statistics in introductory courses.
\section{Tukey's h distribution: Gaussian input}
\label{sec:Gaussian}

For Gaussian input Lambert W $\times$ $F_X$ equals Tukey's $h$, which has been studied extensively. \citet{Dutta02_MeasuringSkewnessKurtosis} show that
\begin{equation}
\E Z^n = \begin{cases}
0, & \text{if } \text{n is odd and } n < \frac{1}{\delta}, \\
\frac{n! (1- n \delta)^{\frac{-(n+1)}{2}}}{2^{n/2} (n/2)!} ,  & \text{if } \text{n is even and } n < \frac{1}{\delta}, \\
\nexists ,  & \text{if } n > \frac{1}{\delta},
\end{cases}
\end{equation}
which in particular implies \citep{Todd08_Parametric_pdf_cdf_tukey} 
\begin{align}
\E Z &= \E Z^3 = 0, \text{ if } \delta < 1 \text{ and } 1/3 \text{, respectively} \\
\label{eq:Y_moments}
\text{ and }\E Z^2 & = \frac{1}{(1 - 2 \delta)^{3/2}}, \text{ if } \delta < \frac{1}{2} , \quad \E Z^4 = 3 \frac{1}{(1-4\delta)^{5/2}}, \text{ if } \delta < \frac{1}{4}.
\end{align}
Thus the kurtosis of $Y$ equals (see Fig.\ \ref{fig:var_kurt})
\begin{equation}
\label{eq:kurt_Y_delta}
\gamma_2(\delta) = 3 \frac{(1-2 \delta)^3}{(1-4\delta)^{5/2}} \text{ for } \delta < 1/4.
\end{equation}
For $\delta = 0$, \eqref{eq:Y_moments} and \eqref{eq:kurt_Y_delta} reduce to the familiar Gaussian values. 

\begin{corollary} 
\label{cor:Tukey_h}
The cdf of Tukey's $h$ equals
\begin{align}
\label{eq:cdf_Tukey_h}
G_Y\left(y \mid \mu_x, \sigma_x, \delta \right) = \Phi \left(\frac{ W_{\tau}(y) - \mu_x}{\sigma_x} \right),
\end{align}
where $\Phi(u)$ is the cdf of a standard Normal. The pdf equals (for $\delta > 0$)
\begin{align}
\label{eq:pdf_Tukey_h}
g_Y\left(y \mid  \mu_x, \sigma_x, \delta \right) & = \frac{1}{\sqrt{2 \pi}} \exp{ \left( - \frac{1 + \delta}{2}  W_{\delta}\left( \frac{y - \mu_x}{\sigma_x}  \right)^2 \right)} \cdot \frac{1}{ 1 + W\left( \delta \left( \frac{y - \mu_x}{\sigma_x} \right)^2 \right) }
\end{align}
\end{corollary}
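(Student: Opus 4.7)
The plan is to invoke Theorem \ref{theorem:cdf_Y} directly, specializing the general location-scale formulas \eqref{eq:cdf_LambertW_Y} and \eqref{eq:pdf_LambertW_Y} to Gaussian input $X \sim \mathcal{N}(\mu_x, \sigma_x^2)$, in which case $F_X(x \mid \boldsymbol{\beta}) = \Phi\!\left(\frac{x - \mu_x}{\sigma_x}\right)$ and $f_X(x \mid \boldsymbol{\beta}) = \frac{1}{\sqrt{2\pi}\,\sigma_x}\exp\!\left(-\frac{(x-\mu_x)^2}{2\sigma_x^2}\right)$. No additional theory is needed, since Tukey's $h$ distribution is by definition the heavy-tail Lambert W $\times$ $F_X$ law under standard Gaussian input, and the location-scale extension is immediate.

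For the cdf, I substitute $F_X$ into \eqref{eq:cdf_LambertW_Y}, evaluated at the argument $x = W_\delta(z)\sigma_x + \mu_x = W_\tau(y)$. The argument of $\Phi$ collapses to $\frac{W_\tau(y) - \mu_x}{\sigma_x}$, which is exactly \eqref{eq:cdf_Tukey_h}. Equivalently this equals $W_\delta\!\left(\frac{y-\mu_x}{\sigma_x}\right)$, confirming the $\delta \to 0$ limit reduces to $\Phi\!\left(\frac{y-\mu_x}{\sigma_x}\right)$.

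For the pdf, the key observation is that evaluating the Gaussian density at $x = W_\delta(z)\sigma_x + \mu_x$ collapses the quadratic in the exponent: $\frac{(x-\mu_x)^2}{2\sigma_x^2} = \frac{W_\delta(z)^2}{2}$. Plugging this into the second equality of \eqref{eq:pdf_LambertW_Y} and merging the two exponential factors yields
\[
g_Y(y) \;\propto\; \exp\!\left(-\tfrac{1}{2}W_\delta(z)^2\right)\cdot \exp\!\left(-\tfrac{\delta}{2}W_\delta(z)^2\right)\cdot \frac{1}{1 + W(\delta z^2)} \;=\; \exp\!\left(-\tfrac{1+\delta}{2}W_\delta(z)^2\right)\cdot\frac{1}{1 + W(\delta z^2)},
\]
which is precisely the stated form \eqref{eq:pdf_Tukey_h}. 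The $\delta = 0$ case follows from the limits $W_\delta(z) \to z$ and $W(\delta z^2) \to 0$ already recorded in Theorem \ref{theorem:cdf_Y}, recovering the Gaussian pdf.

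The proof is essentially a single substitution, so the ``main obstacle'' is purely notational: one has to keep careful track of the Gaussian normalization constant and verify that the $e^{-(\delta/2)W_\delta(z)^2}$ factor produced by the Jacobian in Theorem \ref{theorem:cdf_Y} combines cleanly with the $e^{-W_\delta(z)^2/2}$ factor coming from the standard Normal density. No further appeals to properties of Lambert's $W$ are required beyond the defining identity $W_\delta(z)/z = e^{-(\delta/2)W_\delta(z)^2}$ and the bijectivity of $W_\delta$ established in Lemma \ref{lem:inverse_trafo_h}.
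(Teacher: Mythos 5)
Your proof is correct and is exactly the paper's own argument: the paper's proof of Corollary \ref{cor:Tukey_h} consists of the single line ``Take $X \sim \mathcal{N}\left( \mu_x, \sigma_x^2 \right)$ in Theorem \ref{theorem:cdf_Y},'' and you have simply written out that substitution and the merging of the two exponential factors explicitly. Your attention to the normalization constant is warranted, incidentally: a literal substitution of the $\mathcal{N}(\mu_x,\sigma_x^2)$ density into \eqref{eq:pdf_LambertW_Y} produces a prefactor $\frac{1}{\sqrt{2\pi}\,\sigma_x}$, so the $\frac{1}{\sqrt{2\pi}}$ in \eqref{eq:pdf_Tukey_h} as printed is missing a $1/\sigma_x$ (it is correct for $\sigma_x = 1$), which your use of $\propto$ quietly sidesteps.
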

\begin{proof}
Take $X \sim \mathcal{N}\left( \mu_x, \sigma_x^2 \right)$ in Theorem \ref{theorem:cdf_Y}.
\end{proof}

Section \ref{sec:MLE} studies functional properties of \eqref{eq:pdf_Tukey_h} in more detail.

\subsection{Tukey's h versus student's t}
Student's $t_{\nu}$ distribution with $\nu$ degrees of freedom is often used to model heavy-tailed data  \citep{Yan05, WongChanKam09_tmixtureAR}, as its tail index equals $\nu$. Thus the $n$th moment of a student t RV $T$ exists if $n < \nu$. In particular,
\begin{equation}
\label{eq:first_three_moments_nu}
\E T = \E T^3 = 0 \text{ if } \nu < 1 \text{ or} < 3, \quad \E T^2 = \frac{\nu}{\nu-2} = \frac{1}{1-\frac{2}{\nu}} \text{ if } \frac{1}{\nu} < \frac{1}{2},
\end{equation}
and kurtosis
\begin{equation}
\label{eq:kurt_t_nu}
\gamma_2(\nu) = 3 \frac{\nu - 2}{\nu -4} = 3 \frac{1 - 2 \frac{1}{\nu}}{1 - 4 \frac{1}{\nu}} \text{ if } \frac{1}{\nu} < \frac{1}{4}.
\end{equation}

Comparing \eqref{eq:kurt_t_nu} and \eqref{eq:Y_moments} with \eqref{eq:kurt_Y_delta} and \eqref{eq:first_three_moments_nu} shows a natural association between $1 / \nu$ and $\delta$ and a close similarity between the first four moments of student's $t$ and Tukey's $h$ (Fig.\ \ref{fig:var_kurt}). By continuity and monotonicity, the first four moments of a location-scale $t$ distribution can always be exactly matched by a corresponding location-scale Lambert W $\times$ Gaussian. Thus if student's $t$ is used to model heavy tails, and not as the true distribution of a test statistic, it might be worthwhile to also fit heavy tail Lambert W $\times$ Gaussian distributions for an equally valuable ``second opinion''. For example, a parallel analysis on S\&P 500 $\log$-returns in Section \ref{sec:SP500} leads to divergent inference regarding the existence of fourth moments.\edit{what about loglik?} Additionally, the Lambert W approach allows to Gaussianize and thus reveal hidden patterns in the data; patterns that can be easily overseen in presence of heavy tails (Section \ref{sec:solar_flares}).

\begin{figure}[!t]
\centering
\subfloat[Variance]{\includegraphics[width=.45\textwidth]{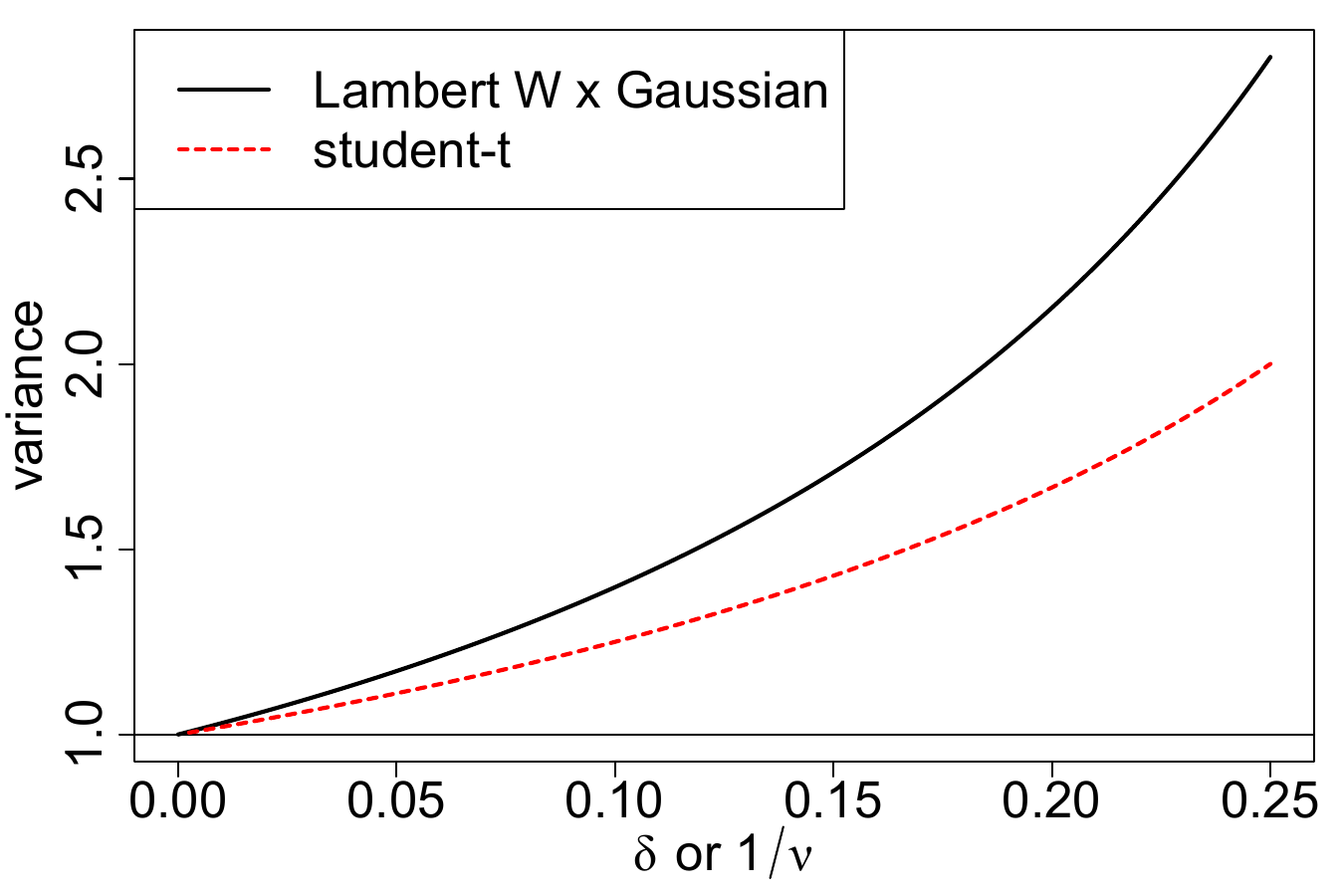}\label{fig:var_t_LambertW}}
\hspace{0.01\textwidth}
\subfloat[Kurtosis]{\includegraphics[width=.45\textwidth]{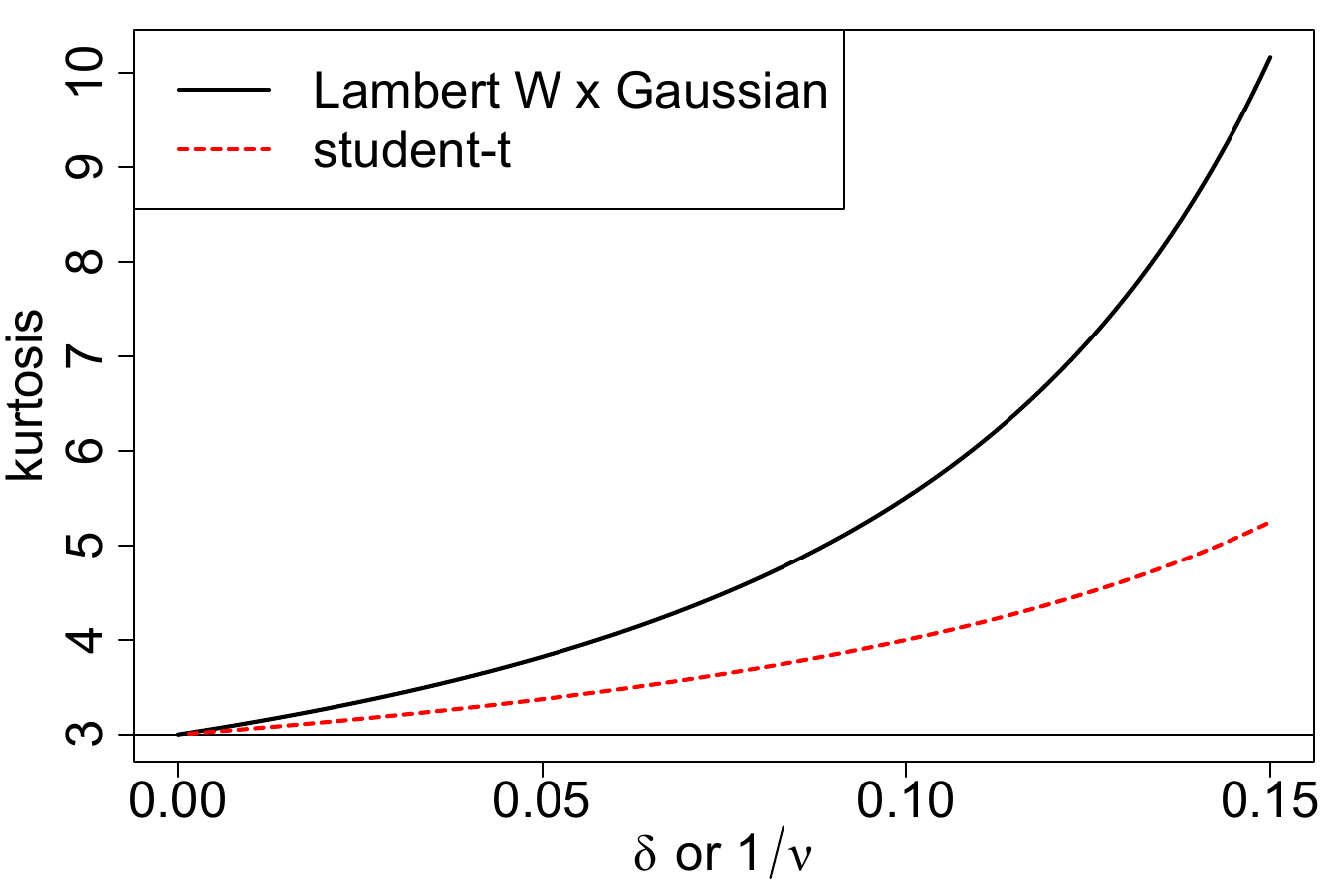}\label{fig:kurt_t_LambertW}}
\caption{\label{fig:var_kurt} Comparing moments of Lambert W $\times$ Gaussian and student's $t$.}

\end{figure}

\section{Parameter Estimation}
\label{sec:estimation}
For a sample of $N$ independent identically distributed (i.i.d.) observations $\mathbf{y}=( y_1, \ldots, y_N )$ from transformation \eqref{eq:normalized_LambertW_Y}, $\theta = (\boldsymbol \beta, \delta)$ has to be estimated from the data. Due to the lack of a closed form pdf of $Y$, this has been typically done by matching quantiles or a method of moments estimator \citep{Field04, MorgenthalerTukey00_FittingQuantiles, Todd08_Parametric_pdf_cdf_tukey}. These inefficient methods can now be replaced by the -- fast and usually efficient -- maximum likelihood estimator (MLE) using the pdf in \eqref{eq:pdf_LambertW_Y}. \citet{RaynerMacGillivray02_numericalMLE} introduce a numerical MLE procedure based on quantile functions, but they conclude that ``sample sizes significantly larger than $100$ should be used to obtain reliable estimates through maximum likelihood''. Simulations in Section \ref{sec:simulations} show that log-likelihood maximization with the Lambert W methodology converges quickly and is accurate even for sample sizes as small as $N = 10$.

\subsection{Maximum Likelihood Estimation (MLE)}
\label{sec:MLE}
For an i.i.d.\ sample $\mathbf{y} \sim g_Y\left(y \mid \boldsymbol \beta, \delta \right)$ the log-likelihood function equals
\begin{equation}
\label{eq:loglikelihood_Y}
\ell \left( \theta \mid \mathbf{y} \right) = \sum_{i=1}^{N} \log g_Y(y_i  \mid  \boldsymbol \beta, \delta).
\end{equation}
The \textsc{MLE} is that $\theta = (\boldsymbol \beta, \delta)$ which maximizes \eqref{eq:loglikelihood_Y}, i.e.\
\begin{equation}
\widehat{\theta}_{MLE} = \left(\widehat{\boldsymbol \beta}, \widehat{\delta}\right)_{MLE} = \arg \max_{\boldsymbol \beta, \delta} \ell \left(  \boldsymbol \beta, \delta \mid \mathbf{y}  \right).
\end{equation}
Since $g_Y(y_i \mid \boldsymbol \beta, \delta)$ is a function of $f_X(x_i \mid \boldsymbol \beta)$, the \textsc{MLE} depends on the specification of the input density. Eq.\ \eqref{eq:loglikelihood_Y} can be decomposed as
\begin{align}
\label{eq:loglikelihood_Y_with_xi}
\ell \left( \boldsymbol \beta, \delta \mid \mathbf{y} \right) & = \ell \left(  \boldsymbol \beta \mid \mathbf{x}_{\tau} \right) + \mathcal{R}\left( \tau \mid \mathbf{y} \right),
\end{align}
where
\begin{equation}
\label{eq:likelihood_x.hat}
\ell \left( \boldsymbol \beta \mid \mathbf{x}_{\tau} \right) = \sum_{i=1}^{N} \log f_X\left( W_{\delta}\left( \frac{y_i - \mu_x}{\sigma_x} \right) \sigma_x + \mu_x \mid \boldsymbol \beta \right) = \sum_{i=1}^{N} \log f_X\left( \mathbf{x}_{\tau} \mid \boldsymbol \beta \right)
\end{equation}
is the log-likelihood of the back-transformed data $\mathbf{x}_{\tau}$ and
\begin{equation}
\label{eq:R_penalty_all}
\mathcal{R}\left( \tau \mid \mathbf{y} \right) = \sum_{i=1}^{n} \log R\left( \mu_x, \sigma_x, \delta \mid  y_i \right),
\end{equation}
where
\begin{equation}
\label{eq:R_penalty}
R\left( \mu_x, \sigma_x, \delta \mid  y_i \right)  = \frac{W_{\delta}\left( \frac{y_i - \mu_x}{\sigma_x} \right)}{  \frac{y_i - \mu_x}{\sigma_x}\left[ 1 + \delta \left( W_{\delta}\left( \frac{y_i - \mu_x}{\sigma_x} \right) \right)^2 \right]}.
\end{equation}
Note that $R\left( \mu_x, \sigma_x, \delta \mid y_i \right)$ only depends on $\mu_x(\boldsymbol \beta)$ and $\sigma_x(\boldsymbol \beta)$ (and $\delta$), but not necessarily on every coordinate of $\boldsymbol \beta$.\\

\begin{figure}[!t]
\centering
\subfloat[Penalty $R\left( \mu_x, \sigma_x, \delta \mid y_i \right)$ \eqref{eq:R_penalty} as a function of $\delta$ and $y$ ($\mu_x = 0$ and $\sigma_x = 1$).
]{\includegraphics[width=.32\textwidth]{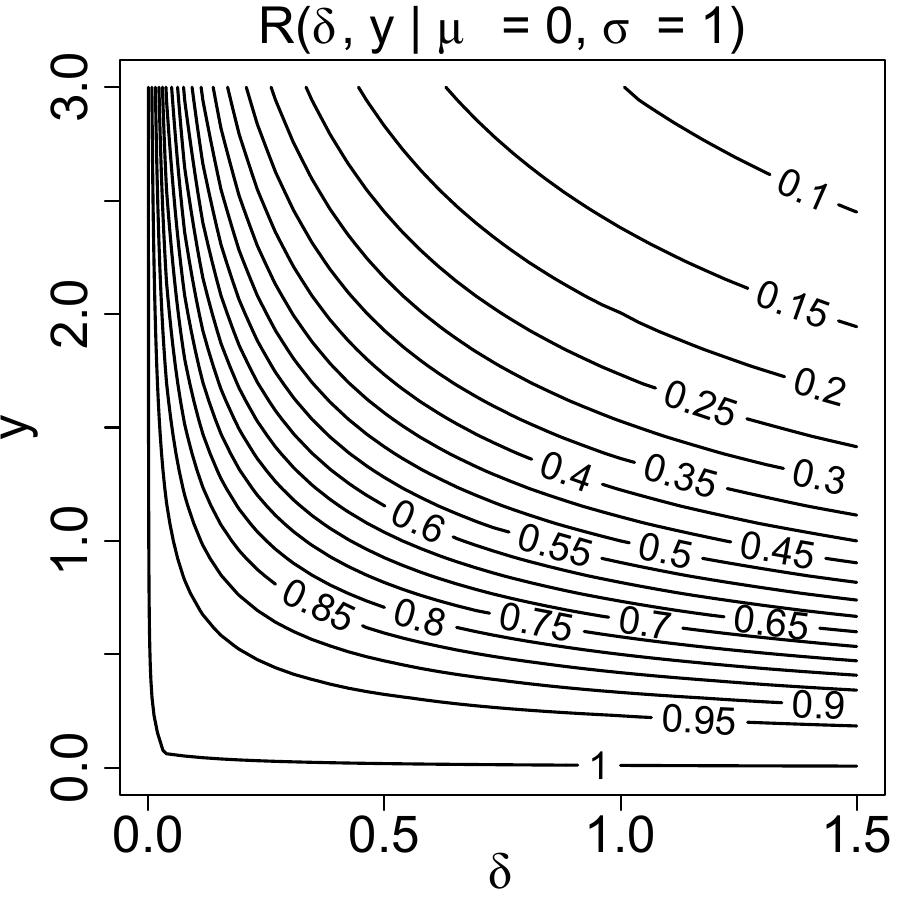}\label{fig:R_delta_y}}
\hspace{0.02\linewidth}
\subfloat[Sample $\mathbf{z}$ of Lambert W $\times$ Gaussian with $\delta = 1/3$ (left); Log-likelihood $\ell \left( \theta \mid \mathbf{y} \right)$ (solid, black) decomposes in input log-likelihood (dotted, green) and penalty (dashed, red).
]
{\includegraphics[width=.6\textwidth]{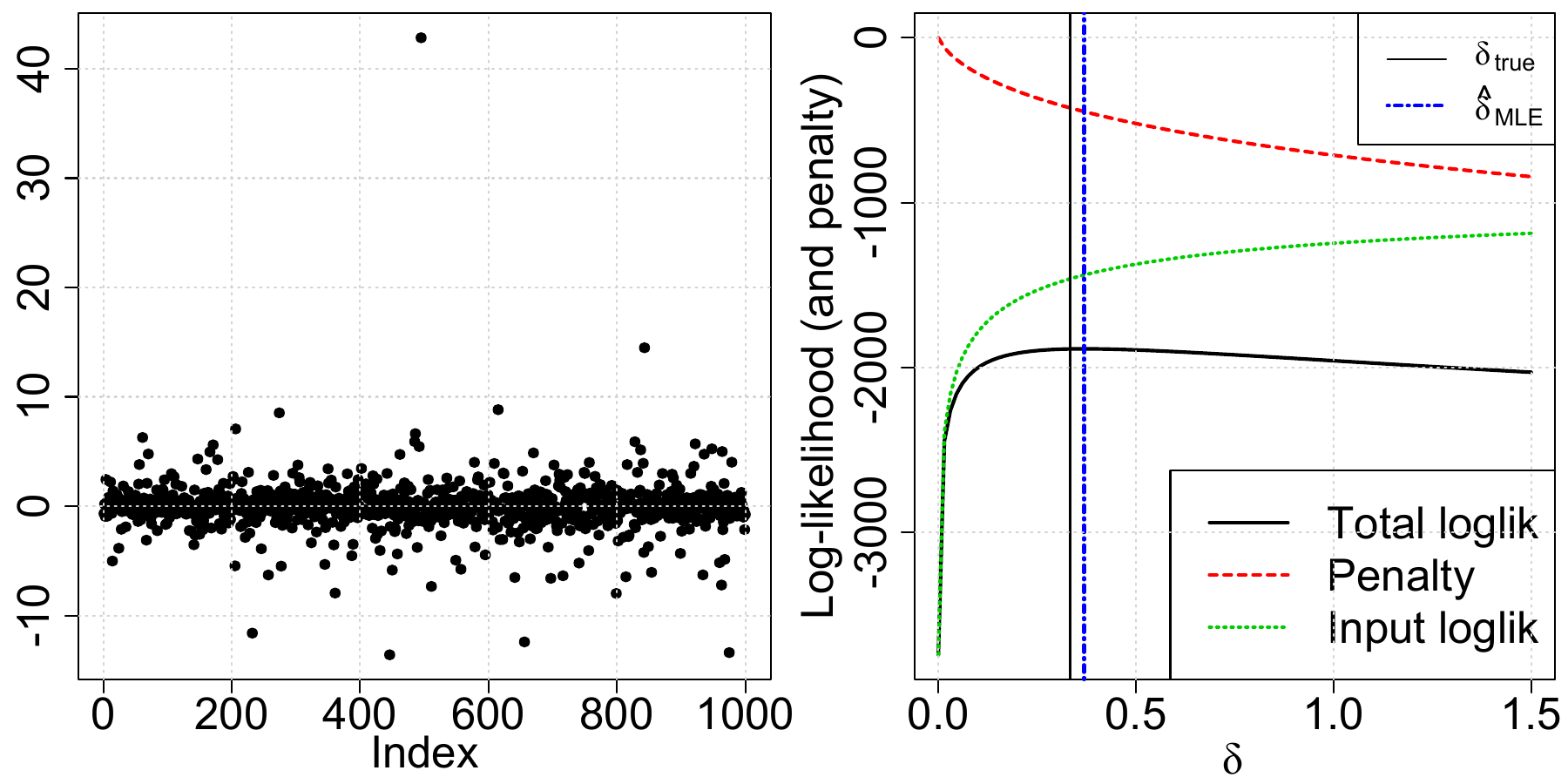}\label{fig:sample_loglik_decomposition}}
\caption{\label{fig:loglik_penalty} Log-likelihood decomposition for Lambert W $\times$ $F_X$ distributions.}
\end{figure} 

Decomposition \eqref{eq:loglikelihood_Y_with_xi} shows the difference between the exact MLE $ \left(\widehat{\boldsymbol \beta}, \widehat{\delta}\right)$ based on $\mathbf{y}$ and the approximate MLE $\widehat{\boldsymbol \beta}_{\mathbf{x}_{\tau}}$ based on $\mathbf{x}_{\tau}$ alone: if we knew $\tau = (\mu_x, \sigma_x, \delta)$ beforehand, then we could back-transform $\mathbf{y}$ to $\mathbf{x}_{\tau}$ and estimate $\widehat{\boldsymbol \beta}_{\mathbf{x}_{\tau}}$ from $\mathbf{x}_{\tau}$ (maximize \eqref{eq:likelihood_x.hat} with respect to $\boldsymbol \beta$). In practice, however, $\tau$ must also be estimated and this enters the likelihood via the additive term $\mathcal{R}\left( \tau \mid \mathbf{y} \right)$. A little calculation shows that for any $y_i \in \R$, $\log R\left( \mu_x, \sigma_x, \delta \mid y_i \right) \leq 0$ if $\delta \geq 0$, with equality if and only if $\delta = 0$. Thus $\mathcal{R}\left( \tau \mid \mathbf{y} \right)$ can be interpreted as a penalty for transforming the data. Maximizing \eqref{eq:loglikelihood_Y_with_xi} faces a trade-off between transforming the data to follow $f_X(x \mid \boldsymbol \beta)$ (and thus increasing $\ell \left(\boldsymbol \beta \mid \mathbf{x}_{\widehat{\tau}} \right)$) versus the penalty of a more extreme transformation (and thus decreasing $\mathcal{R}\left( \tau \mid \mathbf{y} \right)$) -- see Fig.\ \ref{fig:sample_loglik_decomposition}.\\

Figure \ref{fig:R_delta_y} shows a contour plot of $R\left( \mu_x = 0, \sigma_x = 1, \delta \mid y \right)$ as a function of $\delta$ and $y = z$. The penalty for transforming the data increases (in absolute value) either if $\delta$ gets larger (for fixed $y$) or for larger $y$ (for fixed $\delta$). In both cases, increasing $\delta$ makes the transformed data $W_{\delta}(z)$ get closer to $0 = \mu_x$, which in turn increases its input likelihood. For $\delta = 0$, the penalty disappears since input equals output; for $y = 0$ there is no penalty since $W_{\delta}(0) = 0$ for all $\delta$.

Figure \ref{fig:sample_loglik_decomposition} shows a random sample ($N = 1000$) $\mathbf{z} \sim $ Lambert W $\times$ Gaussian with $\delta = 1/3$ and the decomposition of the log-likelihood as in \eqref{eq:loglikelihood_Y_with_xi}. Since $\boldsymbol \beta = (0,1)$ is known, the likelihood and penalty are only functions of $\delta$. The monotonicity of the penalty (decreasing, red) and the input likelihood (increasing, green) as a function of $\delta$ is not particular to this sample, but holds true in general (see Theorem \ref{thm:MLE_delta} below). This monotonicity in each component implies that their sum (black line) has a unique maximum; here $\widehat{\delta}_{MLE} = 0.37$ (blue, dashed vertical line).\\

The maximization of \eqref{eq:loglikelihood_Y_with_xi} can be carried out numerically. Here I show existence and uniqueness of $\widehat{\delta}_{MLE}$ assuming that $\mu_x$ and $\sigma_x$ are known. Theoretical results for $\widehat{\theta}_{MLE}$ remain for future work. Given the ``nice'' form of $g_Y(y)$ - continuous, twice differentiable,\footnote{Assuming that $f_X(\cdot)$ is twice differentiable.} its support does not depend on the parameter, etc.\ - the MLE for $\theta = (\boldsymbol \beta, \delta)$ should have the usual optimality properties \citep{LehmannCasella98_TPE}.

\subsubsection{Properties of The MLE For The Heavy Tail Parameter}
Without loss of generality let $\mu_x = 0$ and $\sigma_x = 1$. In this case
\begin{align}
\label{eq:loglik_delta_z}
\ell \left( \delta \mid \mathbf{z} \right) & \propto -\frac{1}{2} \sum_{i=1}^{N} \left[ W_{\delta}(z_i) \right]^2 + \sum_{i=1}^{N} \log \frac{W_{\delta}\left( z_i \right)}{z_i} - \log  \left( 1 + \delta \left[ W_{\delta}(z_i) \right]^2 \right) \\
& = -\frac{1 + \delta}{2} \sum_{i=1}^{N} \left[ W_{\delta}(z_i) \right]^2 - \sum_{i=1}^{N} \log  \left( 1 + \delta \left[ W_{\delta}(z_i) \right]^2 \right).
\end{align}

\begin{theorem}[Unique MLE for $\delta$] 
\label{thm:MLE_delta}
Let $Z$ have a Lambert W $\times$ Gaussian distribution, where $\mu_x = 0$ and $\sigma_x = 1$ are assumed to be known and fixed. Also consider only the case $\delta \in [0, \infty)$.\footnote{While for some samples $\mathbf{z}$ the MLE also exists for $\delta <0$, it can not be guaranteed for all $\mathbf{z}$. If $\delta < 0$ (and $z \neq 0$), then $W_{\delta}(z)$ is either not unique in $\R$ (principal and non-principal branch) or may not even have a real-valued solution.}
\begin{enumerate}[a)]
\item If 
\begin{equation}
\label{eq:cond_delta_MLE_greater_0}
\frac{\sum_{i=1}^{n} z_i^4}{ \sum_{i=1}^{n} z_i^2} \leq 3,
\end{equation}
then $\widehat{\delta}_{MLE} = 0$. 
\end{enumerate}
If \eqref{eq:cond_delta_MLE_greater_0} does \emph{not} hold, then 
\begin{enumerate}[a)]
\setcounter{enumi}{1}
\item $\widehat{\delta}_{MLE} > 0$ exists and is a positive solution to
\begin{equation}
\label{eq:derivative_equal_0}
\sum_{i=1}^{N} z_i^2 W'(\delta z_i^2) \left(\frac{1}{2} W_{\delta}\left(z_i\right)^{2} - \left( \frac{1}{2} + \frac{1}{1 + W\left(\delta z_i^2 \right)} \right) \right) = 0.
\end{equation}
\item  There is only one such $\delta$ satisfying \eqref{eq:derivative_equal_0}, i.e.\ $\widehat{\delta}_{MLE}$ is unique.
\end{enumerate}
\end{theorem}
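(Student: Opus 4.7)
The plan is to analyse the score $\ell'(\delta)$ and second derivative $\ell''(\delta)$ of the log-likelihood in \eqref{eq:loglik_delta_z} on $[0,\infty)$, and deduce all three parts from two global facts: $\ell(\delta)\to -\infty$ as $\delta\to\infty$, and $\ell''(\delta^\ast)<0$ at every interior critical point $\delta^\ast>0$. First I would derive the score equation \eqref{eq:derivative_equal_0}. The main tool is the identity $W_\delta(z)^2 = W(\delta z^2)/\delta$ combined with Lambert's ODE $W'(x)=W(x)/[x(1+W(x))]$, which give
\[
\frac{\partial}{\partial\delta}W_\delta(z)^2 \;=\; -\,\frac{W(\delta z^2)^2}{\delta^2\,(1+W(\delta z^2))},\qquad \frac{\partial}{\partial\delta}\log\!\bigl(1+W(\delta z^2)\bigr) \;=\; \frac{z^2 W'(\delta z^2)}{1+W(\delta z^2)}.
\]
Substituting these into the derivative of \eqref{eq:loglik_delta_z} and collecting terms yields \eqref{eq:derivative_equal_0}. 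Evaluating at $\delta=0$ with $W(0)=0$, $W'(0)=1$, $W_0(z_i)=z_i$ reduces $\ell'(0)$ to $\tfrac12\sum z_i^4-\tfrac32\sum z_i^2$, so \eqref{eq:cond_delta_MLE_greater_0} is exactly $\ell'(0)\le 0$. For behaviour at $+\infty$, the asymptotic $W(x)\sim\log x$ shows $\sum\log(1+W(\delta z_i^2))\to\infty$ while $(1+\delta)\sum W_\delta(z_i)^2=((1+\delta)/\delta)\sum W(\delta z_i^2)$ grows only like $\log\delta$, so $\ell(\delta)\to -\infty$ (assuming some $z_i\ne 0$; the degenerate case is trivial).

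Given these facts, part (b) is immediate: $\ell'(0)>0$ together with $\ell\to -\infty$ and continuity produces an interior maximiser solving \eqref{eq:derivative_equal_0}. Uniqueness in (c) then follows because if two interior critical points existed, the mean value theorem applied to $\ell'$ would yield a third point between them with $\ell''\ge 0$, contradicting the local-maximum property. The same argument yields (a): if $\ell'(0)\le 0$, no interior local maximum can exist (else an intermediate local minimum would have $\ell''\ge 0$), so $\ell$ is non-increasing on $[0,\infty)$ and the MLE sits at $\delta=0$.

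The main obstacle is thus the local-maximum claim $\ell''(\delta^\ast)<0$. I would attack it by writing the score in the compact form
\[
\ell'(\delta) \;=\; \frac{1}{2}\sum_{i=1}^{N}\frac{u_i^2(u_i^2-1)}{1+v_i}\;-\;\sum_{i=1}^{N}\frac{u_i^2}{(1+v_i)^2},
\]
with $v_i:=W(\delta z_i^2)$ and $u_i^2:=v_i/\delta = W_\delta(z_i)^2$, differentiating once more via the implicit relation $dv_i/d\delta = v_i/[\delta(1+v_i)]$, and then subtracting a suitable multiple of the vanishing first-order condition $\ell'(\delta^\ast)=0$ to cancel the indefinite-sign contributions and expose a sum of manifestly negative terms. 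Should this algebra prove unwieldy, a fallback is to reparametrise by $t=\log\delta$ or $\eta=\delta/(1+\delta)$ and verify strict concavity in the new coordinate using the same Lambert~$W$ identities; monotonicity of the reparametrisation then transfers uniqueness back to the original $\delta$.
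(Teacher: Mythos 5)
Your preparatory computations are fine and coincide with the paper's: the score equation \eqref{eq:derivative_equal_0} follows from $W_\delta(z)^2=W(\delta z^2)/\delta$ and $W'(x)=W(x)/[x(1+W(x))]$ exactly as you say, evaluating at $\delta=0$ gives $\ell'(0)=\tfrac12\sum z_i^4-\tfrac32\sum z_i^2$ so that \eqref{eq:cond_delta_MLE_greater_0} is precisely $\ell'(0)\le 0$, and $\ell(\delta)\to-\infty$ as $\delta\to\infty$ is correct. The genuine gap is that everything else --- part (a), the fact that the root found in (b) is a maximiser, and all of (c) --- is made to hinge on the single claim that $\ell''(\delta^\ast)<0$ at \emph{every} interior critical point, and you never establish that claim. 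You only describe how you ``would attack it'' (cancelling against the first-order condition, or reparametrising by $t=\log\delta$), and you explicitly concede the algebra may prove unwieldy. That inequality is the load-bearing step: without it the mean-value-theorem argument for uniqueness and the ``no interior local maximum when $\ell'(0)\le0$'' argument for (a) both collapse. As written, the proposal is a correct reduction of the theorem to an unproven second-order inequality, not a proof.

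For comparison, the paper never touches $\ell''$. It decomposes $\ell(\delta\mid\mathbf{z})$ into the Gaussian input log-likelihood evaluated at $W_\delta(\mathbf{z})$ plus the penalty $\mathcal{R}(\delta\mid\mathbf{z})$, and proves two pointwise sign statements: $\frac{\partial}{\partial\delta}\bigl[W_\delta(z)\bigr]^2=-W_\delta(z)^4/\bigl(1+W(\delta z^2)\bigr)\le 0$, which makes the input term monotonically increasing in $\delta$ and bounded above by $0$, and $\frac{\partial}{\partial\delta}\log R(\delta\mid z)=-z^2W'(\delta z^2)\bigl(\tfrac12+\tfrac1{1+W(\delta z^2)}\bigr)\le 0$, which makes the penalty monotonically decreasing and unbounded below; existence of the interior root is then obtained by showing $D(0)>0$ together with $D(\delta)<0$ for all $\delta\ge\delta_M$ once $\delta_M$ is large enough that $W_{\delta_M}(z_i)<1$ for every $i$ (so $W_{\delta}(z_i)^4<W_{\delta}(z_i)^2$ termwise). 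If you want to salvage your route you must actually carry out the $\ell''$ computation and verify its sign at critical points; the easier repair is to prove the two monotonicity statements above --- both follow in a few lines from the identities you already wrote down --- and argue the global shape of $\ell$ from them as the paper does.
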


Condition \eqref{eq:cond_delta_MLE_greater_0} says that $\widehat{\delta}_{MLE} > 0$ only if the data is heavy-tailed enough. Points b) and c) guarantee that there is no ambiguity in the heavy tail estimate. This is an advantage over student's $t$ distribution, for example, which has numerical problems and local maxima for unknown (and small) $\nu$ ($\leftrightarrow$ large $\delta$) \citep[see also][]{Fernandez99_tdistribution_pitfalls, LiuRubin95}. On the contrary,  $\widehat{\delta}_{MLE}$ is always a global maximum.\\

The log-likelihood and its gradient depend on $\delta$ and $\mathbf{z}$ only via $W_{\delta}(\mathbf{z})$. Given the heavy tails in $\mathbf{z}$ (for $\delta > 0$) one might expect convergence issues for larger $\delta$ (e.g.\ expected log-likelihood, Fisher information). However, $W_{\delta}(Z) \sim \mathcal{N}(0,1)$ for the true $\delta \geq 0$, and close to a standard Gaussian if $\widehat{\delta}_{MLE} \approx \delta$. Thus the performance of the MLE should not get worse for large $\delta$ as long as the initial estimate is close enough to the truth. Simulations in Section \ref{sec:simulations} support this conjecture, even for $\widehat{\theta}_{MLE}$.

\subsection{Iterative Generalized Method of Moments (IGMM)}
A disadvantage of the \textsc{MLE} is the mandatory a-priori specification of the input distribution. Especially for heavy-tailed data the eye is a bad judgement to choose a particular parametric $f_X(x \mid \boldsymbol \beta)$. It would be useful to directly estimate $\tau$, without the intermediate step of estimating $\theta$ first (and thus no distributional assumption for the input is necessary).

\citet{GMGLambertW_Skewed} presented an estimator for $\tau$ based on iterative generalized methods of moments (IGMM). The idea of IGMM is to find a $\tau$ such that the back-transformed data $\mathbf{x}_{\tau}$ has desired properties, e.g., is symmetric or has kurtosis $3$.  An estimator for $\mu_x$, $\sigma_x$, and $\delta$ can be constructed completely analogously to the skewed IGMM, with the advantage that the heavy tail transformation is bijective (the skewed transformation is not).  Since the algorithm is entirely analogous to the skewed case, details are given in the Supplementary Material, Appendix \ref{sec:details_IGMM}.\\

An advantage of \textsc{IGMM} is that it requires less specific knowledge about the input distribution. Usually, it is also faster than the MLE. Once $\widehat{\tau}_{IGMM}$ has been obtained, the back-transformed $\mathbf{x}_{\widehat{\tau}_{\textsc{IGMM}}}$ can be used to check if $X$ has characteristics of a known parametric distribution $F_X(x \mid \boldsymbol \beta)$.  It must be noted though that testing for a particular distribution $F_X$ are too optimistic as $\mathbf{x}_{\widehat{\tau}}$ will have ``nicer'' properties regarding $F_X$ than the true $\mathbf{x}$ would have. However, estimating the transformation requires only three parameters and for a large enough sample, losing three degrees of freedom should not matter for all practical purposes.

\section{Simulations}
\label{sec:simulations}
This section explores finite sample properties of estimators for $\theta = (\mu_x, \sigma_x, \delta)$ and $(\mu_y, \sigma_y)$ under Gaussian input $X \sim \mathcal{N}(\mu_x, \sigma_x^2)$. In particular, it compares Gaussian \textsc{MLE} (estimation of $\mu_y$ and $\sigma_y$ only), \textsc{IGMM} and Lambert W $\times$ Gaussian \textsc{MLE}, and - for a heavy tail competitor -- the median.\footnote{For IGMM, optimization was restricted to $\delta \in [0, 10]$.} All results below are based on $n = 1,000$ replications.

\subsection{Estimating $\delta$ Only}
\label{sec:estimate_delta_only}
Here I show finite sample properties of $\widehat{\delta}_{MLE}$ for $U \sim \mathcal{N}(0,1)$, where $\mu_x = 0$ and $\sigma_x = 1$ are known and fixed.  Theorem \ref{thm:MLE_delta} shows that $\widehat{\delta}_{MLE}$ is unique: either at the boundary $\delta = 0$ or at the globally optimal solution to \eqref{eq:derivative_equal_0}. Results in Table \ref{tab:MLE_sim} were obtained by numerical optimization restricted to $\delta \geq 0$ ($\Leftrightarrow \log \delta \in \R$) using the \texttt{nlm} function in R.

\begin{table}[!tbp]
 \begin{center}
\caption{\label{tab:MLE_sim} Finite sample properties of $\widehat{\delta}_{MLE}$. For each $N$, $\delta$ was estimated $n = 1,000$ times from a random sample $\mathbf{z} \sim$ Tukey's $h$. The left column for each $\delta$ shows bias, $\overline{\widehat{\delta}}_{MLE}  - \delta$; each right column shows the root mean square error (RMSE) times $\sqrt{N}$.
} 
 \fbox{
 \begin{tabular}{r|rr|c|rr|c|rr|c|rr}
\multicolumn{1}{l}{N}&
\multicolumn{2}{c}{$\delta = 0$}&
\multicolumn{1}{c}{\ }&
\multicolumn{2}{c}{$\delta = 1/10$}&
\multicolumn{1}{c}{\ }&
\multicolumn{2}{c}{$\delta = 1/3$}&
\multicolumn{1}{c}{\ }&
\multicolumn{2}{c}{$\delta = 1/2$}
\tabularnewline \cline{2-3} \cline{5-6} \cline{8-9} \cline{11-12}
\hline
 10&$0.025$&$0.191$&&$-0.017$&$0.394$&&$-0.042$&$0.915$&&$-0.082$&$1.167$\tabularnewline
 50&$0.013$&$0.187$&&$-0.010$&$0.492$&&$-0.018$&$0.931$&&$-0.016$&$1.156$\tabularnewline
 100&$0.010$&$0.200$&&$-0.010$&$0.513$&&$-0.009$&$0.914$&&$-0.006$&$1.225$\tabularnewline
 400&$0.005$&$0.186$&&$-0.003$&$0.528$&&$ 0.000$&$0.927$&&$-0.004$&$1.211$\tabularnewline
 1000&$0.003$&$0.197$&&$ 0.000$&$0.532$&&$-0.001$&$0.928$&&$-0.001$&$1.203$\tabularnewline
 2000&$0.003$&$0.217$&&$-0.001$&$0.523$&&$ 0.000$&$0.935$&&$-0.001$&$1.130$\tabularnewline
\hline
\multicolumn{1}{c}{N }&
\multicolumn{2}{c}{$\delta = 1$}&
\multicolumn{1}{c}{\ }&
\multicolumn{2}{c}{$\delta = 2$}&
\multicolumn{1}{c}{\ }&
\multicolumn{2}{c}{$\delta = 5$}&
\multicolumn{1}{c}{\ }&
\multicolumn{2}{c}{ }
\tabularnewline \cline{2-3} \cline{5-6} \cline{8-9} \cline{11-12}
\hline
 10&$-0.054$&$1.987$&&$-0.104$&$3.384$&&$-0.050$&$7.601$&&$$&$$\tabularnewline
 50&$-0.017$&$1.948$&&$-0.009$&$3.529$&&$ 0.014$&$7.942$&&$$&$$\tabularnewline
 100&$-0.014$&$2.024$&&$-0.001$&$3.294$&&$ 0.011$&$7.798$&&$$&$$\tabularnewline
 400&$ 0.001$&$1.919$&&$-0.002$&$3.433$&&$ 0.001$&$7.855$&&$$&$$\tabularnewline
 1000&$ 0.001$&$1.955$&&$ 0.001$&$3.553$&&$-0.001$&$7.409$&&$$&$$\tabularnewline
 2000&$ 0.001$&$1.896$&&$ 0.000$&$3.508$&&$-0.001$&$7.578$&&$$&$$\tabularnewline
\end{tabular}
}

\end{center}

\end{table}

Table \ref{tab:MLE_sim} shows that the MLE is unbiased for every $\delta$ and settles down (about $N = 100$) to an asymptotic variance, which is increasing with $\delta$. Assuming $\mu_x$ and $\sigma_x$ to be known is unrealistic and thus these finite sample properties are only an indication of the behavior of the joint MLE, $\widehat{\theta}_{MLE}$. Nevertheless they are very remarkable for extremely heavy-tailed data ($\delta > 1$), where standard statistical methods typically break down. One explanation in this behavior lies in the particular form of the likelihood \eqref{eq:loglik_delta_z} and its gradient \eqref{eq:derivative_equal_0} (Theorem \ref{thm:MLE_delta}). Although both depend on $\mathbf{z}$, they only do so through $W_{\delta}\left( \mathbf{z} \right) = \mathbf{u} \sim \mathcal{N}(0,1)$. Hence as long as $\widehat{\delta}_{MLE}$ is sufficiently close to the true $\delta$, \eqref{eq:loglik_delta_z} and \eqref{eq:derivative_equal_0} are functions of almost Gaussian RVs and standard asymptotic results should still apply.

\subsection{Estimating All Parameters Jointly}
Here we consider the realistic scenario where $\mu_x$ and $\sigma_x$ are also unknown. We consider various sample sizes ($N=50$, $100$, and $1000$) and different degrees of heavy tails, $\delta \in \lbrace 0, 1/3, 1, 1.5 \rbrace$, each one representing a particularly interesting situation: 
\begin{inparaenum}[i)]
\item Gaussian data (does additional - superfluous - estimation of $\delta$ affect other estimates?),
\item fourth moments do not exist anymore, 
\item non-existing mean,
\item extremely heavy-tailed data -- can we get useful estimates at all?
\end{inparaenum}

\begin{table}[!ht]
\centering
\caption{\label{tab:simulations} In each subtable: (first rows) average, (middle rows) proportion of estimates below truth, (bottom rows) empirical standard deviation \text{times $\sqrt{N}$}.}
\subfloat[Truly Gaussian data: $\delta = 0$ \label{tab:simulations_delta_0}]{
\tiny
\centering
\begin{tabular}{|c||r||rr||rrr|r||rrr|r||r|}\hline 
 \multicolumn{1}{|c||}{$\delta = 0$}&\multicolumn{1}{|c||}{median}&\multicolumn{2}{c||}{Gaussian MLE} &\multicolumn{4}{c||}{IGMM}& \multicolumn{4}{c||}{Lambert W MLE} & \multicolumn{1}{c|}{NA}\tabularnewline
 \hline
 \multicolumn{1}{|c||}{N}&\multicolumn{1}{c||}{$ $ }&\multicolumn{1}{c}{$\mu_y$}&\multicolumn{1}{c||}{$\sigma_y$}&\multicolumn{1}{c}{$\mu_x$}&\multicolumn{1}{c}{$\sigma_x$}&\multicolumn{1}{c|}{$\delta$}&\multicolumn{1}{c||}{$\sigma_y$}&\multicolumn{1}{c}{$\mu_x$}&\multicolumn{1}{c}{$\sigma_x$}&\multicolumn{1}{c|}{$\delta$}&\multicolumn{1}{c||}{$\sigma_y$}&\multicolumn{1}{c|}{ratio}\tabularnewline

\hline
$  50$&$0.00$&$0.00$&$0.98$&$0.00$&$0.97$&$0.02$&$0.99$&$0.00$&$0.96$&$0.02$&$0.98$&$0$\tabularnewline
$ 100$&$0.00$&$0.00$&$0.99$&$0.00$&$0.98$&$0.01$&$1.00$&$0.00$&$0.97$&$0.01$&$0.99$&$0$\tabularnewline
$1000$&$0.00$&$0.00$&$1.00$&$0.00$&$0.99$&$0.00$&$1.00$&$0.00$&$0.99$&$0.00$&$1.00$&$0$\tabularnewline
\hline
$  50$&$0.50$&$0.50$&$0.57$&$0.51$&$0.60$&$0.66$&$0.54$&$0.51$&$0.65$&$0.66$&$0.56$&$0$\tabularnewline
$ 100$&$0.50$&$0.51$&$0.56$&$0.51$&$0.62$&$0.62$&$0.53$&$0.52$&$0.65$&$0.62$&$0.56$&$0$\tabularnewline
$1000$&$0.50$&$0.49$&$0.52$&$0.49$&$0.62$&$0.56$&$0.52$&$0.49$&$0.63$&$0.56$&$0.52$&$0$\tabularnewline
\hline
$  50$&$1.24$&$1.01$&$0.72$&$1.01$&$0.76$&$0.21$&$0.73$&$1.02$&$0.78$&$0.26$&$0.72$&$0$\tabularnewline
$ 100$&$1.25$&$1.02$&$0.70$&$1.02$&$0.76$&$0.23$&$0.70$&$1.03$&$0.78$&$0.26$&$0.70$&$0$\tabularnewline
$1000$&$1.26$&$0.98$&$0.73$&$0.98$&$0.79$&$0.22$&$0.73$&$0.98$&$0.79$&$0.22$&$0.73$&$0$\tabularnewline
\hline
\end{tabular}
}\\

\subfloat[No fourth moments: $\delta = 1/3$ \label{tab:simulations_delta_1_3}]{
\tiny
\centering
\begin{tabular}{|c||r||rr||rrr|r||rrr|r||r|}\hline 
 \multicolumn{1}{|c||}{$\delta = 1/3$}&\multicolumn{1}{|c||}{median}&\multicolumn{2}{c||}{Gaussian MLE} &\multicolumn{4}{c||}{IGMM}& \multicolumn{4}{c||}{Lambert W MLE} & \multicolumn{1}{c|}{NA}\tabularnewline
 \hline
 \multicolumn{1}{|c||}{N}&\multicolumn{1}{c||}{$ $ }&\multicolumn{1}{c}{$\mu_y$}&\multicolumn{1}{c||}{$\sigma_y$}&\multicolumn{1}{c}{$\mu_x$}&\multicolumn{1}{c}{$\sigma_x$}&\multicolumn{1}{c|}{$\delta$}&\multicolumn{1}{c||}{$\sigma_y$}&\multicolumn{1}{c}{$\mu_x$}&\multicolumn{1}{c}{$\sigma_x$}&\multicolumn{1}{c|}{$\delta$}&\multicolumn{1}{c||}{$\sigma_y$}&\multicolumn{1}{c|}{ratio}\tabularnewline
\hline
$  50$&$0.00$&$0.00$&$ 1.98$&$0.00$&$1.07$&$0.29$&$   \infty$&$0.00$&$1.01$&$0.33$&$  \infty$&$0$\tabularnewline
$ 100$&$0.00$&$0.00$&$ 2.03$&$0.00$&$1.04$&$0.31$&$   \infty$&$0.00$&$1.00$&$0.33$&$  \infty$&$0$\tabularnewline
$1000$&$0.00$&$0.00$&$ 2.18$&$0.00$&$1.00$&$0.33$&$ 2.34$&$0.00$&$1.00$&$0.33$&$ 2.34$&$0$\tabularnewline
\hline
$  50$&$0.50$&$0.51$&$ 0.78$&$0.50$&$0.38$&$0.63$&$ 0.60$&$0.50$&$0.52$&$0.54$&$ 0.54$&$0$\tabularnewline
$ 100$&$0.50$&$0.51$&$ 0.78$&$0.51$&$0.42$&$0.61$&$ 0.60$&$0.50$&$0.51$&$0.54$&$ 0.54$&$0$\tabularnewline
$1000$&$0.48$&$0.51$&$ 0.77$&$0.51$&$0.47$&$0.56$&$ 0.55$&$0.51$&$0.50$&$0.53$&$ 0.52$&$0$\tabularnewline
\hline
$  50$&$1.27$&$2.21$&$ 6.56$&$1.44$&$1.45$&$1.10$&$NA$&$1.23$&$1.35$&$1.14$&$NA$&$0$\tabularnewline
$ 100$&$1.30$&$2.33$&$11.28$&$1.43$&$1.42$&$1.12$&$NA$&$1.19$&$1.34$&$1.09$&$NA$&$0$\tabularnewline
$1000$&$1.23$&$2.25$&$16.76$&$1.39$&$1.45$&$1.20$&$15.97$&$1.17$&$1.33$&$1.08$&$12.30$&$0$\tabularnewline
\hline

\end{tabular}
}\\

\subfloat[Non-existing mean: $\delta = 1$ \label{tab:simulations_delta_1}]{
\tiny
\centering
\begin{tabular}{|c||r||rr||rrr|r||rrr|r||r|}\hline 
 \multicolumn{1}{|c||}{$\delta = 1$}&\multicolumn{1}{|c||}{median}&\multicolumn{2}{c||}{Gaussian MLE} &\multicolumn{4}{c||}{IGMM}& \multicolumn{4}{c||}{Lambert W MLE} & \multicolumn{1}{c|}{NA}\tabularnewline
 \hline
 \multicolumn{1}{|c||}{N}&\multicolumn{1}{c||}{$ $ }&\multicolumn{1}{c}{$\mu_y$}&\multicolumn{1}{c||}{$\sigma_y$}&\multicolumn{1}{c}{$\mu_x$}&\multicolumn{1}{c}{$\sigma_x$}&\multicolumn{1}{c|}{$\delta$}&\multicolumn{1}{c||}{$\sigma_y$}&\multicolumn{1}{c}{$\mu_x$}&\multicolumn{1}{c}{$\sigma_x$}&\multicolumn{1}{c|}{$\delta$}&\multicolumn{1}{c||}{$\sigma_y$}&\multicolumn{1}{c|}{ratio}\tabularnewline
\hline
50&$0.00$&$  -0.10$&$    24.6$&$-0.01$&$1.18$&$0.90$&$\infty$&$0.00$&$1.01$&$0.99$&$\infty$&$0$\tabularnewline
100&$0.00$&$   0.74$&$    72.4$&$ 0.00$&$1.09$&$0.95$&$\infty$&$0.00$&$1.01$&$0.99$&$\infty$&$0$\tabularnewline
1000&$0.00$&$   3.84$&$   348.1$&$ 0.00$&$1.01$&$1.00$&$\infty$&$0.00$&$1.00$&$1.00$&$\infty$&$0$\tabularnewline
\hline
50&$0.53$&$   0.52$&$     1.0$&$ 0.51$&$0.34$&$0.65$&$  1$&$0.51$&$0.52$&$0.52$&$  1$&$0$\tabularnewline
100&$0.50$&$   0.52$&$     1.0$&$ 0.51$&$0.38$&$0.63$&$  1$&$0.50$&$0.53$&$0.53$&$  1$&$0$\tabularnewline
1000&$0.49$&$   0.52$&$     1.0$&$ 0.51$&$0.48$&$0.53$&$  1$&$0.49$&$0.51$&$0.51$&$  1$&$0$\tabularnewline
\hline
50&$1.27$&$  65.85$&$   424.3$&$ 2.10$&$2.50$&$2.32$&$NA$&$1.19$&$1.70$&$2.16$&$NA$&$0$\tabularnewline
100&$1.30$&$ 410.75$&$  4050.2$&$ 2.01$&$2.28$&$2.59$&$NA$&$1.17$&$1.74$&$2.25$&$NA$&$0$\tabularnewline
1000&$1.26$&$3307.58$&$104052.7$&$ 1.93$&$2.21$&$2.81$&$NA$&$1.11$&$1.64$&$2.18$&$NA$&$0$\tabularnewline
\hline

\end{tabular}
}\\
\subfloat[Extreme heavy tails: $\delta = 1.5$ \label{tab:simulations_delta_1_5}]{
\tiny
\centering
\begin{tabular}{|c||r||rr||rrr|r||rrr|r||r|}\hline 
 \multicolumn{1}{|c||}{$\delta = 1.5$}&\multicolumn{1}{|c||}{median}&\multicolumn{2}{c||}{Gaussian MLE} &\multicolumn{4}{c||}{IGMM}& \multicolumn{4}{c||}{Lambert W MLE} & \multicolumn{1}{c|}{NA}\tabularnewline
 \hline
 \multicolumn{1}{|c||}{N}&\multicolumn{1}{c||}{$ $ }&\multicolumn{1}{c}{$\mu_y$}&\multicolumn{1}{c||}{$\sigma_y$}&\multicolumn{1}{c}{$\mu_x$}&\multicolumn{1}{c}{$\sigma_x$}&\multicolumn{1}{c|}{$\delta$}&\multicolumn{1}{c||}{$\sigma_y$}&\multicolumn{1}{c}{$\mu_x$}&\multicolumn{1}{c}{$\sigma_x$}&\multicolumn{1}{c|}{$\delta$}&\multicolumn{1}{c||}{$\sigma_y$}&\multicolumn{1}{c|}{ratio}\tabularnewline
\hline
50&$-0.02$&$     6.84$&$    309$&$-0.02$&$1.23$&$1.37$&$ \infty $&$-0.01$&$1.00$&$1.49$&$\infty$&$0.01$\tabularnewline
100&$ 0.00$&$   -51.16$&$   3080$&$-0.01$&$1.12$&$1.44$&$ \infty $&$ 0.00$&$1.01$&$1.50$&$\infty$&$0.00$\tabularnewline
1000&$ 0.00$&$   176.13$&$  14251$&$ 0.00$&$1.01$&$1.49$&$ \infty $&$ 0.00$&$1.00$&$1.50$&$\infty$&$0.00$\tabularnewline
\hline
50&$ 0.53$&$     0.48$&$      1$&$ 0.51$&$0.34$&$0.64$&$  1$&$ 0.53$&$0.53$&$0.54$&$  1$&$0.01$\tabularnewline
100&$ 0.51$&$     0.53$&$      1$&$ 0.54$&$0.37$&$0.61$&$  1$&$ 0.52$&$0.51$&$0.51$&$  1$&$0.00$\tabularnewline
1000&$ 0.50$&$     0.50$&$      1$&$ 0.50$&$0.47$&$0.54$&$  1$&$ 0.49$&$0.53$&$0.52$&$  1$&$0.00$\tabularnewline
\hline
50&$ 1.32$&$  1347.71$&$   9261$&$ 2.57$&$3.20$&$3.12$&$ NA $&$ 1.15$&$1.86$&$2.76$&$NA$&$0.01$\tabularnewline
100&$ 1.33$&$ 42156.28$&$ 418435$&$ 2.39$&$2.87$&$3.44$&$ NA $&$ 1.12$&$1.78$&$2.84$&$NA$&$0.00$\tabularnewline
1000&$ 1.26$&$124462.82$&$3903629$&$ 2.18$&$2.66$&$3.67$&$ NA $&$ 1.11$&$1.80$&$2.85$&$NA$&$0.00$\tabularnewline
\hline

\end{tabular}
}
\end{table}

The convergence tolerance for IGMM was set to $tol = 1.22 \cdot 10^{-4}$. Table \ref{tab:simulations} summarizes the simulation. Each sub-table is organized as follows: columns represent parameter estimates; the three main rows are the average over $n = 1,000$ replications (top), the proportion of estimates below the true value (middle), and the empirical standard deviation around the empirical average times $\sqrt{N}$ -- not around the truth (bottom).

The Gaussian MLE estimates $\sigma_y$ directly, while IGMM and the Lambert W $\times$ Gaussian MLE estimates $\delta$ and $\sigma_x$, which implicitly give $\widehat{\sigma}_y$ through $\sigma_y(\delta, \sigma_x) = \sigma_x \cdot \frac{1}{\sqrt{(1 - 2 \delta)^{3/2}}}$ if $\delta < 1/2$ (see \eqref{eq:Y_moments}). For a fair comparison each sub-table also includes a column for $\widehat{\sigma}_y = \widehat{\sigma}_x \cdot \frac{1}{\sqrt{(1 - 2 \widehat{\delta})^{3/2}}}$. Some of these entries contain ``$\infty$'', even for $\delta < 1/2$;  this occurs if at least one $\widehat{\delta} \geq 1/2$. 

For any $\delta < 1$, $\mu_x = \mu_y$, thus they can be directly compared. For $\delta \geq 1$, the mean does not exist; each sub-table for these $\delta$ interprets $\mu_y$ as the median.
 
\paragraph{Gaussian data: $\delta = 0$}
This setting checks if imposing the Lambert W framework, even though its use is superfluous, causes a quality loss in the estimation of $\mu_y = \mu_x$ or $\sigma_y = \sigma_x$. Furthermore, critical values for $H_0: \delta = 0$ (Gaussian tails) can be obtained. Table \ref{tab:simulations_delta_0} shows that all estimators are unbiased and quickly tend to a large-sample variance.  Additional estimation of $\delta$ does not affect the efficiency of $\widehat{\mu}_x$ compared to estimating solely $\mu$ (both for IGMM and Lambert W $\times$ Gaussian MLE). Estimating $\sigma_y$ directly by Gaussian MLE does not give better results than the Lambert W $\times$ Gaussian MLE: both are unbiased and have similar standard deviation. 

\paragraph{No fourth moment: $\delta = 1/3$}
Here $\sigma_y(\delta, \sigma_x = 1) = 2.28$, but fourth moments do not exist anymore.  This results in an increasing empirical standard deviation of $\widehat{\sigma}_{y}$ as $N$ grows. In contrast, estimates for $\sigma_x$ are not drifting off. In presence of these large heavy tails the median is much less variable than Gaussian MLE and IGMM. Yet, Lambert W $\times$ Gaussian MLE for $\mu_x$ even outperforms the median.

\paragraph{Non-existing mean: $\delta = 1$} Here the mean is non-finite. Thus both sample moments diverge, and their standard errors are also growing quickly. The median still provides a very good estimate for the location, but is again inferior to both Lambert W estimators, which are unbiased and seem to converge to an asymptotic variance at rate $\sqrt{N}$.

\paragraph{Extreme heavy tails: $\delta = 1.5$} As in Section \ref{sec:estimate_delta_only}, IGMM and Lambert W MLE continue to be unbiased even though the data is extremely heavy-tailed. Moreover, Lambert W MLE also has the smallest empirical standard deviation overall. In particular, the Lambert W MLE for $\mu_x$ has an approximately $20\%$ lower standard deviation than the median. 

The last column shows that for some $N$ about $1\%$ of the $n = 1,000$ simulations generated invalid likelihood values (NA and $\infty$). Here the search for the optimal $\delta$ lead into regions with a numerical overflow in the evaluation of $W_{\delta}(z)$. For a comparable summary, these few cases were omitted and new simulations added until a full $n = 1,000$ finite estimates were found. Since this only happened in $1\%$ of the cases and also such heavy-tailed data is rarely encountered in practice, this numerical issue is not a real limitation in statistical practice.

\subsection{Discussion of the Simulations}
This simulation study confirms well-known facts about the sample average, standard deviation, and median and compares them to finite sample properties of the two Lambert W estimators. The median is known to be robust, which shows here as its quality does not depend on the thickness of the tails.

IGMM is unbiased for $\tau$ independent of the magnitude of $\delta$. As expected the Lambert W MLE for $\theta$ has the best properties: it is unbiased for all $\delta$, and for $\delta = 0$ it performs as well as the classic sample mean and standard deviation. For small $\delta$ it has the same empirical standard deviation as the Gaussian MLE, but a lower one than the median for large $\delta$.

Hence the only advantage of estimating $\mu_y$ and $\sigma_y$ by sample moments of $\mathbf{y}$ is speed; otherwise the Lambert W $\times$ Gaussian MLE is at least as good as the Gaussian MLE and clearly outperforms it for heavy-tailed data.

\section{Applications}
\label{sec:applications}
Tukey's $h$ distribution has already proven useful to model heavy-tailed data, but parametric inference was limited to quantile fitting or methods of moments estimation \citep{Todd08_Parametric_pdf_cdf_tukey, Fischer10_financial_teletraffic, Field04}. Theorem \ref{theorem:cdf_Y} allows us to estimate $\theta$ by ML. 

This section shows the usefulness of the presented methodology on simulated as well as real world data: 
\begin{inparaenum}[i)]
\item Section \ref{sec:sample_mean_Cauchy} demonstrates Gaussianizing on the Cauchy sample from the Introduction;
\item Section \ref{sec:SP500} shows that heavy tail Lambert W $\times$ Gaussian distributions provide an excellent fit to daily \textsc{S\&P 500} log-return series; and
\item Section \ref{sec:solar_flares} shows how removing heavy tails reveals hidden patterns in power-law type data.
\end{inparaenum}

\subsection{Estimating Location of a Cauchy With The Sample Mean}
\label{sec:sample_mean_Cauchy}
It is well-known that the sample mean $\overline{\mathbf{y}}$ is a poor estimate of the location parameter of a Cauchy distribution, since the sampling distribution of  $\overline{\mathbf{y}}$ is again a Cauchy; in particular, its variance does not go to $0$ for $n \rightarrow \infty$.

Heavy-tailed Lambert W $\times$ Gaussian distributions have similar properties to a Cauchy for $\delta \approx 1$. The mean of $X$ equals the location of $Y$, due to symmetry around $\mu_x$ (for all $\delta \geq 0$) and $c$, respectively. Thus we can estimate $\tau$ from the Cauchy sample $\mathbf{y}$, transform $\mathbf{y}$ to $\mathbf{x}_{\widehat{\tau}}$, estimate $\mu_x$ from $\mathbf{x}_{\widehat{\tau}} = W_{\widehat{\tau}}(\mathbf{y})$, and thus obtain an estimate of $c$.\\

The data $\mathbf{y} \sim \mathcal{C}(0,1)$ in Fig.\ \ref{fig:Cauchy_sample} has heavy tails with two extreme (positive) samples. A Cauchy ML fit gives $\widehat{c} = 0.03 (0.055)$ and $\widehat{s} = 0.86 (0.053)$ (standard errors in parenthesis). A Lambert W $\times$ Gaussian MLE gives $\widehat{\mu}_x = 0.03 (0.055)$, $\widehat{\sigma}_x = 1.05 (0.072)$, and $\widehat{\delta} = 0.86 (0.082)$. Thus both fits correctly fail to reject $\mu_x = c = 0$. Table \ref{tab:summary_cauchy} shows summary statistics on both samples. Since the Cauchy distribution does not have a well-defined mean, $\overline{\mathbf{y}} = 2.304 (2.101)$ is not meaningful. However, $\mathbf{x}_{\widehat{\tau}_{MLE}}$ is approximately Gaussian and we use the sample average to do inference: $\overline{\mathbf{x}}_{\widehat{\tau}_{MLE}} = 0.033 (0.0472)$ correctly fails to reject a zero location for $\mathbf{y}$. The transformed $\mathbf{x}_{\widehat{\tau}_{MLE}}$ features additional Gaussian characteristics (symmetric, no excess kurtosis), and even the null hypothesis of Normality cannot be rejected (p-value $\geq 0.5$).\\

Figure \ref{fig:Cauchy_Gaussian_avg} shows the running sample average for the original sample and its Gaussianized version. For a fair comparison $\widehat{\tau}_{MLE}^{(n)}$ was re-estimated cumulatively for each $n = 5, \ldots, 500$, and then used to compute $(x_{1},\ldots, x_n)$. Even for small $n$ the transformation works extremely well: the highly influential point around $n \approx 50$ greatly affects $\overline{\mathbf{y}}$, but has no relevant effect on $\overline{\mathbf{x}}_{\widehat{\tau}_{MLE}^{(n)}}$. Overall, the sample average of the Gaussianized data has the usual good properties.  And even for very small $n$ it is already clear that the location of the underlying Cauchy distribution is approximately zero.\\

Although a toy example, it shows that removing (strong) heavy tails from data works and provides new, ``nice'' data which can then be used for more refined methods.

\begin{table}[!t]
\centering
\caption{\label{tab:summary_stats_both} Summary statistics for observed (heavy-tailed) $\mathbf{y}$ and back-transformed (Gaussianized) data $\mathbf{x}_{\widehat{\tau}_{MLE}}$. $**$ stands for $< 10^{-16}$; $*$  for $< 2.2 \cdot 10^{-16}$.}

\setcounter{subtable}{-1}
\subfloat 
{
\centering
\small
\fbox{
\begin{tabular}{r}
  \\ 
  \hline
  Min  \\ 
  Max  \\ 
  Mean  \\ 
  Median  \\ 
  Stdev  \\ 
  Skewness  \\ 
  Kurtosis \\ 
  \hline
  SW  \\
  AD \\
\end{tabular}
}
  }
\hspace{-0.4cm}
\setcounter{subtable}{0}
\subfloat[\label{tab:summary_cauchy} $\mathbf{y} \sim \mathcal{C}(0,1)$ \newline (Section \ref{sec:sample_mean_Cauchy})]{
\centering
\small
\fbox{
\begin{tabular}{rrr}
 $\mathbf{y}$ & $\mathbf{x}_{\widehat{\tau}}$ & $\mathbf{x}_{\widehat{\lambda}}$ \\ 
  \hline
 -161.59 & -3.16 & 0 \\ 
 952.95 & 3.81 & 33.18 \\ 
 2.30 & 0.03 & 14.98 \\ 
 0.04 & 0.04 & 14.96 \\ 
 46.980 & 1.06 & 1.20 \\ 
 17.43 & 0.12 & 3.90\\ 
 343.34 & 3.21 & 161.75 \\ 
  \hline
 $*$ & 0.71 & $**$\\
 $**$ & 0.51 & $**$ \\
\end{tabular}
}
  }
\hspace{-0.5cm}
  \subfloat[\label{tab:summary_stats_SP500} $\mathbf{y}$ = \textsc{S\&P 500} \newline (Section \ref{sec:SP500}) ]{
    \centering
    \small
    \fbox{
\begin{tabular}{rr}
 $\mathbf{y}$ & $\mathbf{x}_{\widehat{\tau}}$ \\ 
  \hline
 -7.11 & -2.42 \\ 
4.99 & 2.23 \\ 
 0.05 & 0.05 \\ 
 0.04 & 0.04 \\ 
 0.95 & 0.71 \\ 
 -0.30 & -0.04 \\ 
 7.70 & 2.93 \\
  \hline
 $*$ & 0.24 \\
 $*$ & 0.18 \\
\end{tabular}
}
  }
\hspace{-0.5cm}
\subfloat[\label{tab:summary_stats_solarflares} $\mathbf{y}$ = solar flares \newline (Section \ref{sec:solar_flares}) ]{
\centering
\small
\fbox{
\begin{tabular}{rr}
 $\mathbf{y}$ & $\mathbf{x}_{\widehat{\tau}}$ \\ 
  \hline
 20 & 20 \\ 
 231300 & 157 \\ 
 689.4 & 89.0 \\ 
 87 & 87 \\ 
 6520.6 & 27.0 \\ 
 22.2 & 0.1 \\ 
 582.1 & 1.9 \\ 
  \hline
 $**$ & $**$ \\
 $**$ & $**$ \\
\end{tabular}
}
}
\end{table}

\subsection{Heavy Tails in Finance: S\&P 500 Case Study}
\label{sec:SP500}
A lot of financial data displays negative skewness and excess kurtosis. Since financial data is in general not i.i.d., it is often modeled with a (skew) student-t distribution underlying a (generalized) auto-regressive conditional heteroskedastic (GARCH) \citep{Engle82, Bollerslev86} or a stochastic volatility (SV) model \citep{MelinoTurnbull90, DeoHurvichYu06}. Using the Lambert W approach we can build upon the knowledge and implications of Gaussianity (and avoid deriving properties of a GARCH or SV model with heavy-tailed innovations), and simply ``Gaussianize'' the reutns before fitting more complex -- GARCH or SV -- models.  

\begin{remark}
Time series models with Lambert W $\times$ Gaussian white noise are far beyond the scope of this work, but can be a direction of future research. Here I only consider the unconditional distribution.
\end{remark}

Figure \ref{fig:normfit_SP500} shows the \textsc{S\&P 500} log-returns with a total of $N = 2,780$ daily observations.\footnote{R package \texttt{MASS}, dataset \texttt{SP500}.} Table \ref{tab:summary_stats_SP500} confirms the heavy tails (sample kurtosis $7.70$), but also indicates negative skewness ($-0.296$). As the sample skewness $\widehat{\gamma}_1(\mathbf{y})$ is very sensitive to outliers, we should test for symmetry by fitting a skewed distribution and testing its skewness parameter(s) for zero. In case of the double-tail Lambert W $\times$ Gaussian this means to test $H_0: \delta_{\ell} = \delta_r = \delta$ versus $H_1: \delta_{\ell} \neq \delta_r$. Since the likelihood can now be computed by \eqref{eq:loglikelihood_Y_with_xi}, we can use a likelihood ratio test with one degree of freedom ($3$ versus $4$ parameters). The log-likelihood of the double-tail Lambert W $\times$ Gaussian fit (Table \ref{tab:SP500_LambertW_dd}) equals $-3606.0 = -2972.27 + (-633.73)$ (input + penalty), while the one $\delta$ fit gives $-3606.56 = -2971.47 + (-635.09)$.  Here the double tails pay a lower penalty for transforming the data, but in turn give less Gaussian transformed sample.  Comparing twice their difference to a $\chi^2_1$ distribution gives a p-value of $0.29$.  For comparison, a skew-t fit \citep{AzzaliniCapitanio03}, with location $c$, scale $s$, shape $\alpha$, and $\nu$ degrees of freedom  also yields \footnote{Function \texttt{st.mle} in the R package \href{cran.r-project.org/web/packages/sn}{\texttt{sn}}.} a non-significant $\widehat{\alpha}$ (Table \ref{tab:SP500_skew_t}). Thus both fits cannot reject symmetry.\\

\begin{figure}
\centering
\subfloat[Observed heavy tail returns $\mathbf{y}$]{\includegraphics[width=.45\textwidth]{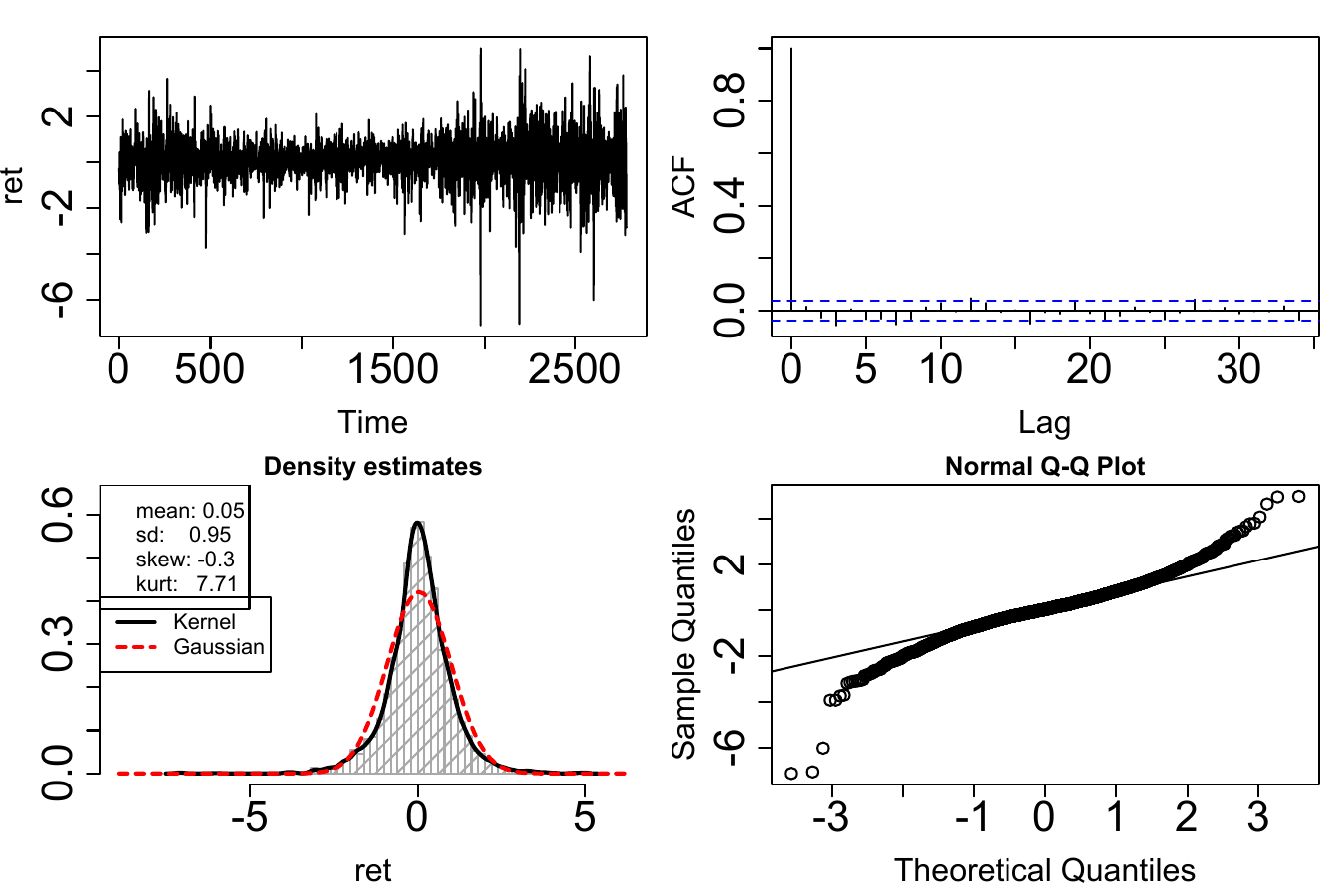}\label{fig:normfit_SP500}}
\hspace{0.05\textwidth}
\subfloat[Gaussianized returns $\mathbf{x}_{\widehat{\tau}_{MLE}}$ ]{\includegraphics[width=.45\textwidth]{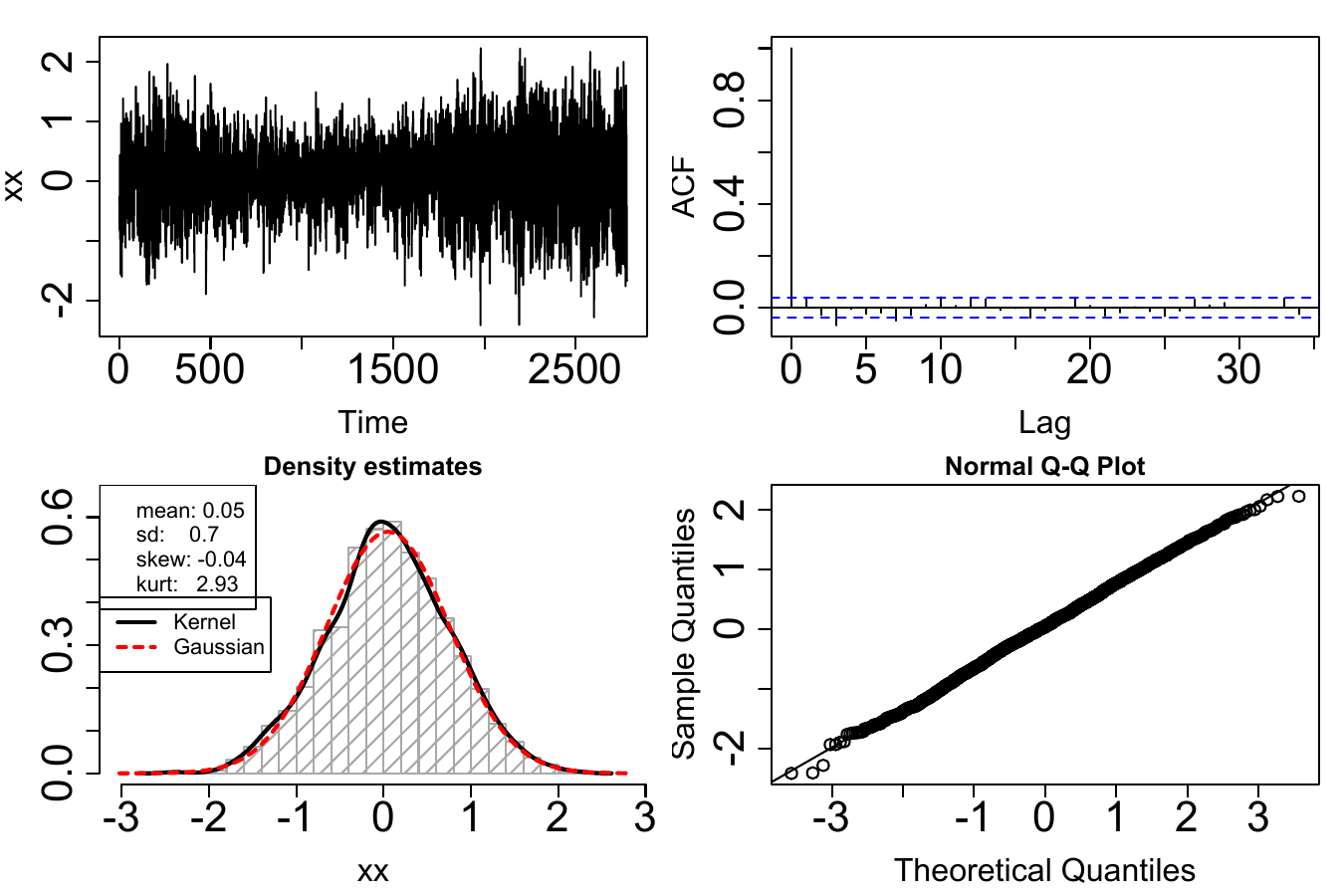}\label{fig:normfit_SP500_input}}
\caption{\label{fig:input_output} Lambert W Gaussianization of S\&P 500 log-returns: $\widehat{\tau} = (0.05, 0.70,   0.17)$. In (a) and (b): data (top left); autocorrelation function (ACF) (top right); histogram, Gaussian fit, and KDE (bottom left); Normal QQ plot (bottom right).}
\end{figure} 

Assume we have to make a decision if we should trade a certificate replicating the \textsc{S\&P 500}. Since we can either buy or sell, it is not important if the average return is positive or negative, as long as it is significantly different from zero.

\subsubsection{Gaussian Fit to Returns}
If we ignore heavy tails and estimate $(\mu_y, \sigma_y)$ by Gaussian MLE, $\widehat{\mu}_y = 0$ can not be rejected on a $\alpha = 1\%$ level (Table \ref{tab:SP500_Gaussian}). However, a plain sample average over-estimates the variance in presence of heavy tails, and thus adds bias to the test statistic.
\subsubsection{Heavy Tail Fit to Returns}
Both a heavy tail Lambert W $\times$ Gaussian (Table \ref{tab:SP500_LambertW}) and student-$t$ fit (Table \ref{tab:SP500_t}) reject the zero mean null (p-values, $10^{-4}$ and $3 \cdot 10^{-5}$, respectively). The standard errors for the location parameter are essentially the same.\\

While location and scale estimates are almost identical, the tail estimates lead to very different conclusions: while for $\widehat{\nu} = 3.71$ only moments up to order $3$ exist, in the Lambert W $\times$ Gaussian case moments up to order $5$ exist ($1/0.172 = 5.81$). This is especially noteworthy as many theoretical results in the (financial) time series literature rely on finite fourth moments \citep{Zadrozny05, Mantegna98}; consequently many empirical studies test if financial data actually satisfy this assumption \citep{Cont01_Empiricalproperties, Huismanetal01_TailIndexSmallSamples}. For this particular dataset student's $t$ and a Lambert W $\times$ Gaussian fit give different answers to the same question. Since previous empirical studies often use student's $t$ as a baseline \citep{WongChanKam09_tmixtureAR}, it might be worthwhile to re-examine their findings in light of heavy tail Lambert W $\times$ Gaussian distributions.

\begin{table}[!t]
  \centering
\caption{\label{tab:MLE_for_SP500} MLE fits to \textsc{S\&P 500} $\mathbf{y}$ (a, b, c, d, e) and the Gaussianized data  $\mathbf{x}_{\widehat{\tau}_{MLE}}$ (f).}
\subfloat[\label{tab:SP500_LambertW_dd} double-tail Lambert W $\times$ Gaussian = Tukey's $hh$ (\textsc{S\&P 500})]{
      \small
      \centering
\begin{tabular}{ccccc}
 &  Est. &  se &  t & Pr($> \mid t \mid $) \\
  \hline
$\mu_x$ 		& 0.06 	& 0.015 	& 3.66 & 0.00 \\ 
$\sigma_x$ 		& 0.71 	& 0.016 	& 44.00 & 0.00 \\ 
  $\delta_{\ell}$ & 0.19 & 0.021 	& 8.99 & 0.00 \\ 
  $\delta_r$ 	& 0.16 	& 0.019 	& 8.24 & 0.00 \\ 
   \hline
\end{tabular}
}
\subfloat[\label{tab:SP500_skew_t} skew $t$ (\textsc{S\&P 500})]{
      \small
      \centering
\begin{tabular}{ccccc}
 &  Est. &  se &  t & Pr($> \mid t \mid $) \\
  \hline
$c$ 	& 0.10 	& 0.061 	& 1.65 	& 0.10 \\ 
$s$ 	& 0.67 	& 0.017 	& 38.47 & 0.00 \\ 
$\alpha$ & -0.08 & 0.101 	& -0.77 & 0.44 \\ 
$\nu$ 	& 3.73 	& 0.297 	& 12.57 & 0.00 \\ 
   \hline
\end{tabular}
}\\

  \subfloat[\label{tab:SP500_LambertW} Lambert W $\times$ Gaussian = Tukey's $h$ (\textsc{S\&P 500})]{
    \small
    \centering
\begin{tabular}{ccccc}
 &  Est. &  se &  t & Pr($> \mid t \mid $) \\
  \hline
$\mu_x$ 	& 0.06 & 0.015 	& 3.65 & 0.000 \\
  $\sigma_x$ & 0.71 & 0.016 	& 43.95 & 0.000 \\ 
  $\delta$ 	& 0.17 & 0.016 		& 11.05 & 0.000 \\
 \end{tabular}
  }
  \subfloat[\label{tab:SP500_t} student-t (\textsc{S\&P 500})]{
    \small
    \centering
    \begin{tabular}{ccccc}
 &  Est. &  se &  t & Pr($> \mid t \mid $) \\
  \hline
$c$ & 0.06 & 0.015 & 3.65 & 0.00 \\ 
  $s$ & 0.67 & 0.017 & 39.51 & 0.00 \\
  $\nu$ & 3.72 & 0.295 & 12.61 & 0.00 \\ 
 \end{tabular}
  }\\

  \subfloat[\label{tab:SP500_Gaussian} Gaussian (\textsc{S\&P 500})]{
    \small
    \centering
   \begin{tabular}{ccccc}
 &  Est. &  se &  t & Pr($> \mid t \mid $) \\
  \hline
$\mu_y$  	& 0.05 	& 0.018 	& 2.55 & 0.01 \\ 
  $\sigma_y$ & 0.95 & 0.013 	& 74.57 & 0.00 \\ 
 \end{tabular}
  }
  \subfloat[\label{tab:SP500_input_MLE} Gaussian ($\mathbf{x}_{\widehat{\tau}_{MLE}}$)]{
    \small
    \centering
\begin{tabular}{ccccc}
 &  Est. &  se &  t & Pr($> \mid t \mid $) \\
  \hline
$\mu_{x_{\widehat{\tau}}} $		&0.05 	& 0.013 & 3.81 & 0.00 \\  
$\sigma_{x_{\widehat{\tau}}}$ 	& 0.71 & 0.009 & 74.57 & 0.00 \\
 \end{tabular}
  }
\end{table}

\subsubsection{``Gaussianizing'' Returns}
A typical parameter inference study would conclude here. Using Lambert's W function we can analyze the back-transformed $\mathbf{x}_{\widehat{\tau}_{MLE}}$ to test if a Lambert W $\times$ Gaussian distribution is indeed appropriate. Figure \ref{fig:normfit_SP500_input} shows that $\mathbf{x}_{\widehat{\tau}_{MLE}}$ is indistinguishable from a Gaussian sample. Not even one Normality test can reject Gaussianity: p-values are $0.18$, $0.18$, $0.31$, and $0.24$, respectively (Anderson Darling, Cramer-von-Mises, Shapiro-Francia, Shapiro-Wilk; see \citet{Thode02}). Table \ref{tab:summary_stats_SP500} also shows that Lambert W ``Gaussianiziation'' was successful: $\widehat{\gamma}_2(\mathbf{x}_{\widehat{\tau}}) = 2.93$ and $\widehat{\gamma}_2(\mathbf{x}_{\widehat{\tau}})= -0.039$ are within the typical variation for a Gaussian sample. Thus
\begin{align}
\label{eq:final_model_SP500}
Y = \left( U e^{\frac{0.172}{2} U^2} \right) 0.705 + 0.055, \quad U = \frac{X -  0.055}{0.705}, \quad U \sim \mathcal{N}(0,1)
\end{align}
is an adequate (unconditional) Lambert W $\times$ Gaussian model for the \textsc{S\&P 500} log-returns $\mathbf{y}$. For trading, this means that the expected return is significantly larger than zero ($\widehat{\mu}_x = 0.055 >0$), and thus replicating certificates should be bought.

\subsubsection{Gaussian MLE for Gaussianized Data}
For $\delta_l = \delta_r \equiv \delta < 1$, also $\mu_x \equiv \mu_y$. We can therefore replace testing $\mu_{y} = 0$ versus $\mu_{y} \neq 0$ for a non-Gaussian $\mathbf{y}$, with the very well understood hypothesis test $\mu_{x} = 0$ versus $\mu_{x} \neq 0$ for the Gaussian  $\mathbf{x}_{\widehat{\tau}_{MLE}}$. In particular, standard errors based on $\frac{\widehat{\sigma}}{\sqrt{N}}$ - and thus t and p-values - should be closer to the ``truth'' (Table  \ref{tab:SP500_LambertW} and \ref{tab:SP500_t}) than a Gaussian MLE on the non-Gaussian $\mathbf{y}$ (Table \ref{tab:SP500_Gaussian}). Table \ref{tab:SP500_input_MLE} shows that standard errors for $\widehat{\mu}_{\mathbf{x}}$ are even a bit too small compared to the heavy-tailed versions. Since the ``Gaussianizing'' transformation was estimated, treating $\mathbf{x}_{\widehat{\tau}_{MLE}}$ as if it was original data is too optimistic regarding its Gaussianity (recall the penalty \eqref{eq:R_penalty_all} in the total likelihood \eqref{eq:loglikelihood_Y_with_xi}).\\

This example confirms that if a model and its theoretical properties are based on Gaussianity, but the observed data is heavy-tailed, then Gaussianizing the data first gives more reliable inference than applying the Gaussian methods to the original, heavy-tailed data (Fig.\ \ref{fig:LambertW_heavy_flowchart}). Clearly, a joint estimation of the model parameters based on Lambert W $\times$ Gaussian errors (or any other heavy-tailed distribution) would be optimal. However, theoretical properties and estimation techniques may not have been developed and implemented yet, or are simply not known to researchers who are non-experts in heavy-tailed statistics. The Lambert Way to Gaussianize data thus is a pragmatic method to improve statistical inference on heavy-tailed data, while preserving the ease of usage and interpretation of Gaussian models.

\subsection{Removing Power Law From Solar Flare Counts}
\label{sec:solar_flares}
The previous section focused on Lambert W $\times$ $F_X$ distributions as a ``true'' model for the data $\mathbf{y}$. Here I consider it merely as a data transformation to remove heavy tails. In the same way as scaling $\mathbf{y}$ to zero-mean, unit-variance data, $(\mathbf{y} - \overline{\mathbf{y}}) / \widehat{\sigma}_y$, does not necessarily mean we believe the underlying process is Gaussian, we can also convert $\mathbf{y}$ to $\mathbf{x}_{\tau} = W_{\tau}(\mathbf{y})$ without assuming that $\mathbf{y}$ is actually Lambert W $\times$ Gaussian. While $\mathbf{x}_{\widehat{\tau}}$ might lose the interpretability of the observed data (e.g.\, units become distorted), it can be helpful for exploratory data analysis (EDA), as the eye is a bad judgment to detect regularities corrupted by heavy tails. Removing them can reveal hidden patterns and thus greatly improve the accuracy of statistical inference for $\mathbf{y}$.\\
\begin{figure}[!t]
\centering
\setcounter{subfigure}{0}

\subfloat[Peak X-ray intensity $\mathbf{y}$.]
{\includegraphics[width=.31\textwidth]
	{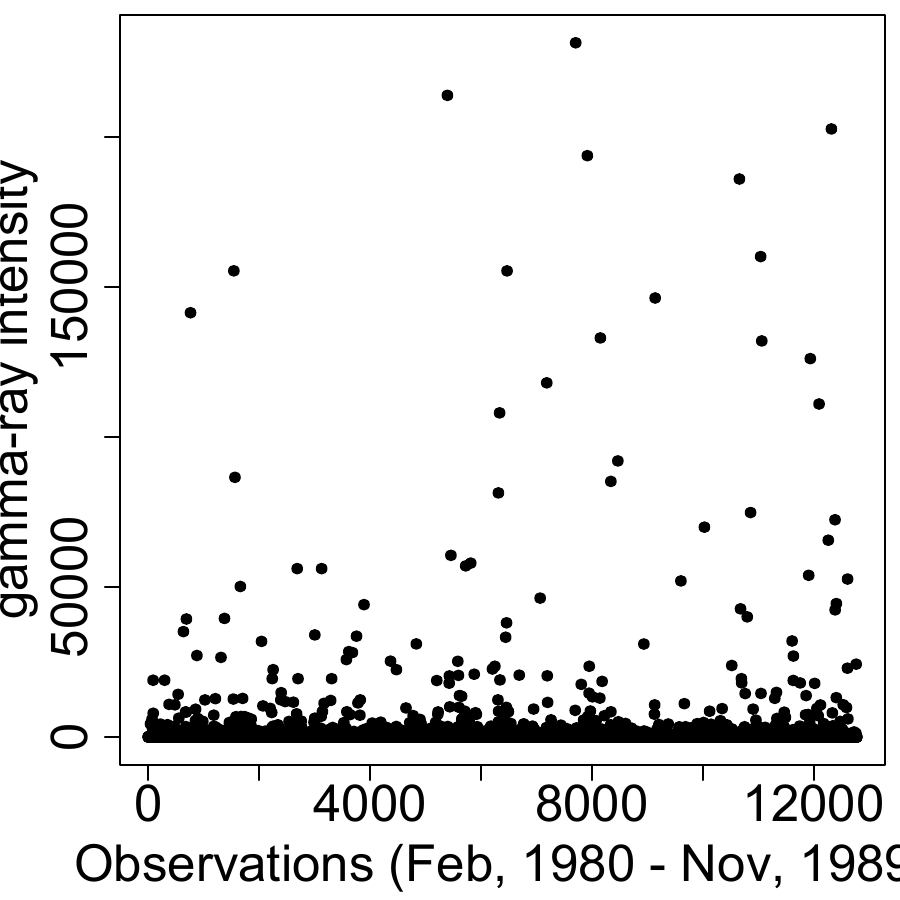}\label{fig:flares}}
\hspace{0.01\textwidth}
\subfloat[KDE fit of $\mathbf{y}$ over time. For better visualization $y < 165$.]
{\includegraphics[trim = 15mm 10mm 35mm 20mm , clip, width=.31\textwidth]
	{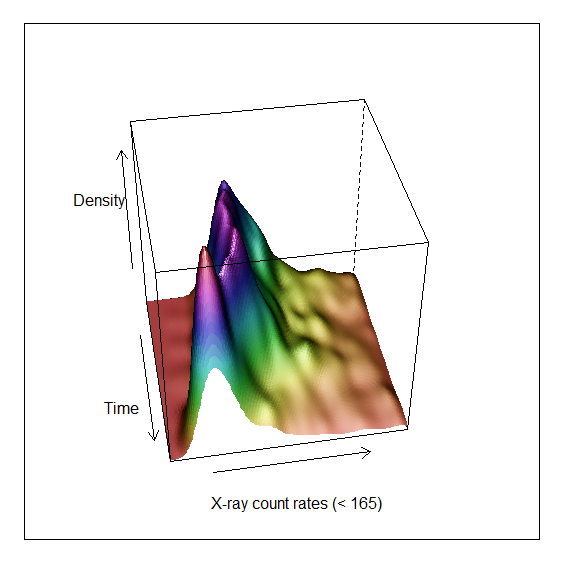}\label{fig:Gaussianized_Flares_cutoff_KDE3D}}
\hspace{0.01\textwidth}
\subfloat[KDE fit of $\mathbf{x}_{\widehat{\tau}}$ over time (no truncation in $x$).] {\includegraphics[trim = 15mm 10mm 35mm 20mm, clip, width=.31\textwidth]
	{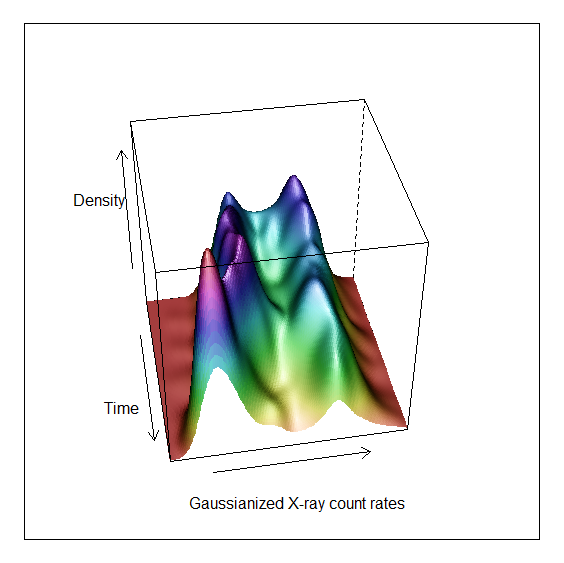}\label{fig:Gaussianized_Flares_KDE3D}}\\

\subfloat[Zoom to $\mathbf{y} \leq 400$; horizontal lines $\widehat{y}_{\min} = 323 \pm 89$ for power-law cut-off. 
]
{\includegraphics[width=.31\textwidth]{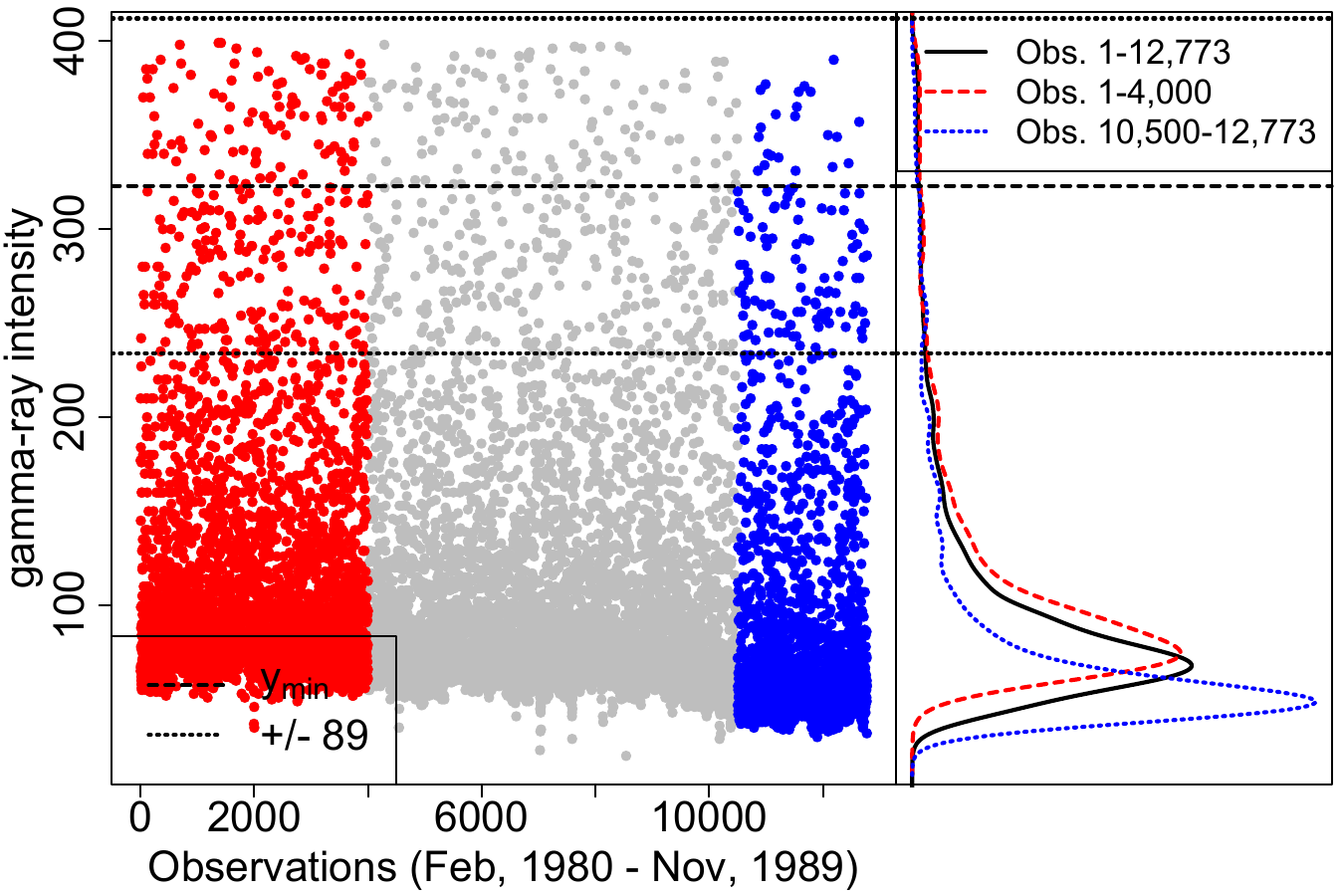}\label{fig:flares_cutoff} }
\hspace{0.01\textwidth}
\subfloat[Back-transformed $\mathbf{x}_{\widehat{\tau}_1}$, $\widehat{\tau}_1 = (74.46, 26.32, 1.53)$; horizontal lines at $W_{\widehat{\tau}_1}(\widehat{y}_{\min} \pm 89) = 114.94$ and $(110.96, 117.58)$. 
]{\includegraphics[width=.31\textwidth]
	{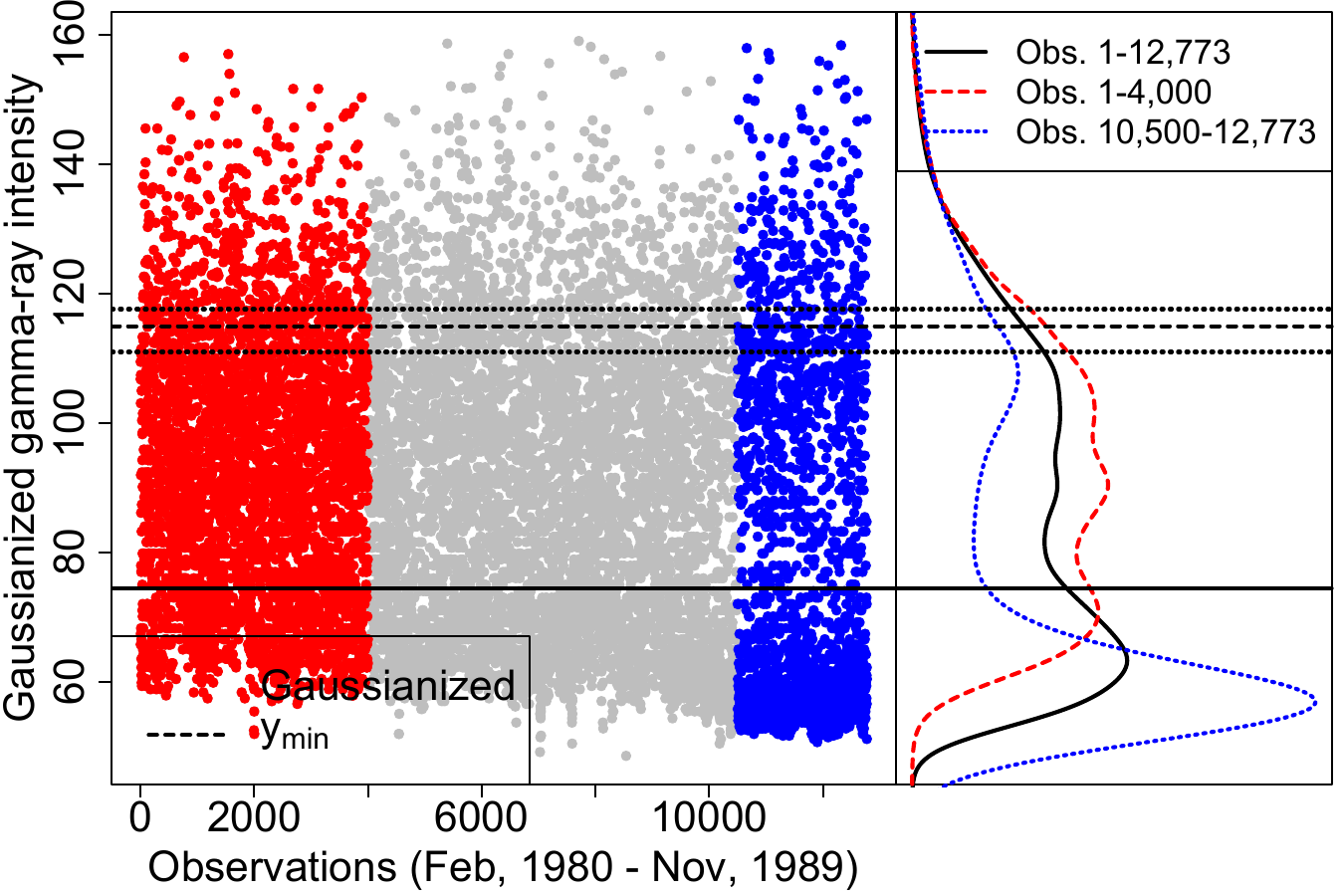}\label{fig:Gaussianized_flares_h}}
\hspace{0.01\textwidth}
\subfloat[Back-transformed $\mathbf{x}_{\widehat{\tau}}$, $\widehat{\tau} = (86.97, 26.80, 0, 2.37)$; horizontal lines at $W_{\widehat{\tau}}(\widehat{y}_{\min}\pm 89)  = 121.16$ and  $(117.74, 123.37)$. 
]
{\includegraphics[width=.31\textwidth]
	{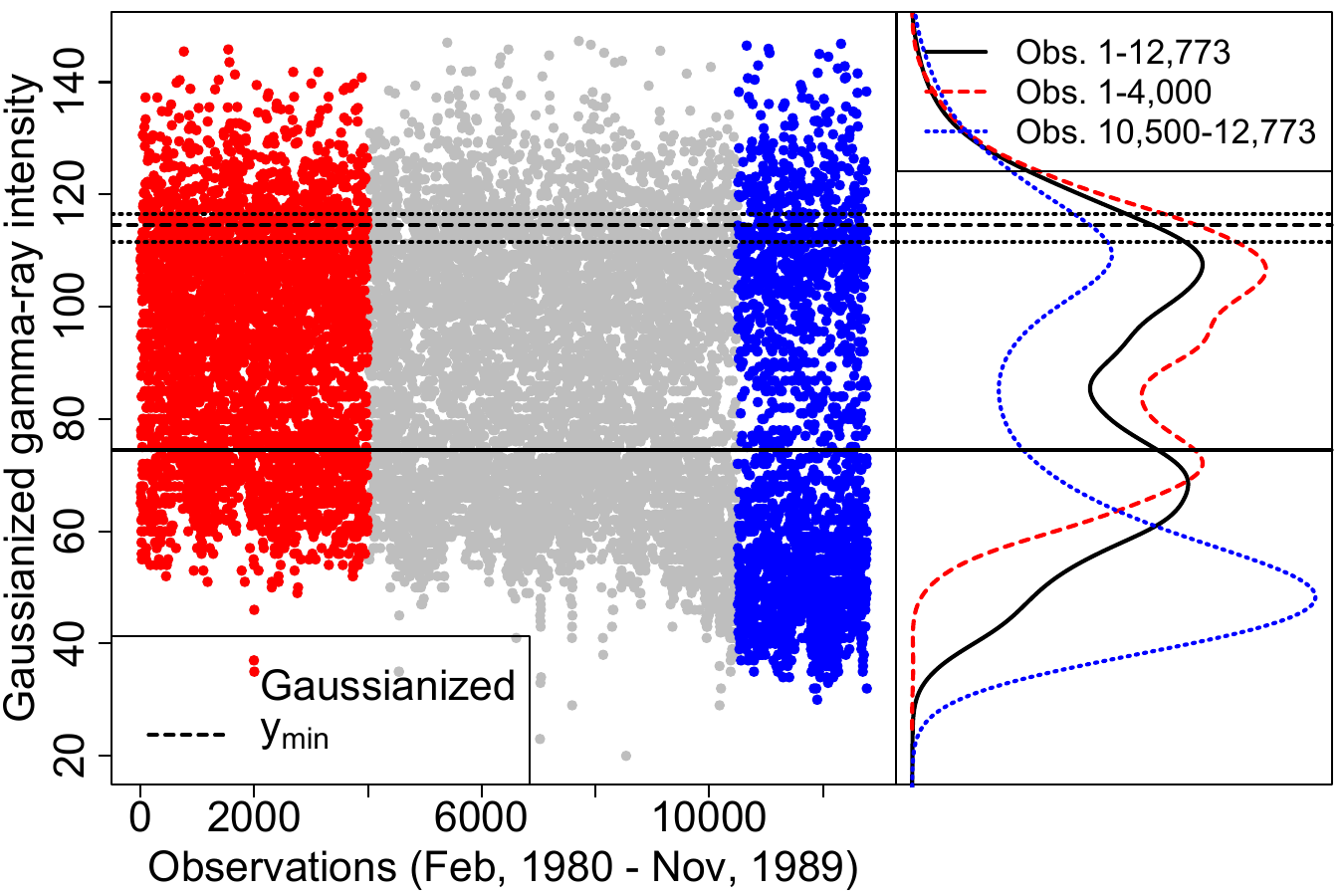}\label{fig:Gaussianized_flares}}
\hspace{0.01\textwidth}

\caption{\label{fig:solar_flares} Peak X-ray count rates of solar flares.}
\end{figure}

Here I study solar flare gamma-ray count rates \citep{ClausetShaliziNewman09_powerlaws, Newman05_power}. The data\footnote{Dataset \texttt{SolarFlares} in the \texttt{LambertW} package.} were collected approximately four times a day from Feb.\ 1980 until Nov.\ 1989 giving $T = 12,773$ observations. See \citet{Dennis91_SolarFlaresOriginal} for details and scientific background.\\

The gamma-ray count rates exhibit a strong right heavy tail (Fig.\ \ref{fig:flares}), which makes more detailed visual inspection as well as simple EDA difficult. A zoom to $y_i \leq 400$ in Fig.\ \ref{fig:flares_cutoff} shows that a lot of counts lie between $50$ and $100$ and this level drops off at the end of the observation cycle. This drop is not an intrinsic characteristic of solar flares but due to a decreasing sensitivity of the X-ray detectors over time \citep{Dennis91_SolarFlaresOriginal}.  For the sake of comparison with  \citet{ClausetShaliziNewman09_powerlaws, Newman05_power} most estimates are based on all $T = 12,773$ observations. Figures \ref{fig:flares_cutoff}, \ref{fig:Gaussianized_flares_h}, and \ref{fig:Gaussianized_flares} also show separate density estimates for the first $4,000$\edit{why 4000?} and last $2,273$ observations, and while the estimates change, the qualitative findings do not.\\

\citet{ClausetShaliziNewman09_powerlaws} find that a power-law ($\widehat{a} = 1.79 (0.02)$) with cut-off ($\widehat{y}_{\min} = 323 (89)$) gives the best fit amongst various alternatives. However, this first EDA might not be complete: not only visually heavy tails can obscure underlying non-trivial structure, but also estimates - such as the power law fit or non-parametric density estimates (Fig.\ \ref{fig:flares_cutoff} and \ref{fig:Gaussianized_Flares_cutoff_KDE3D}) - are affected by the heavy right tail. Here I show that Gaussianizing this data reveals new insights for the data-generating process, with a new interpretation for the optimality of the cut-off.

A Lambert W $\times$ Gaussian MLE fit $\widehat{\theta} = (\widehat{\mu}_x, \widehat{\sigma}_x, \widehat{\delta_{\ell}},\widehat{\delta_{r}}) = (86.97, 26.80, 0, 2.37)$ confirms that only the right tail (tail index $1/2.373 = 0.421$) needs a Gaussianizing transformation.\footnote{For comparison Fig.\ \ref{fig:Gaussianized_flares_h} also shows the back-transformed data $\mathbf{x}_{\widehat{\tau}_1}$ using the same $\delta$ on each tail ($\widehat{\tau}_1 = (74.46, 26.32, 1.53)$). However, due to the clear right heavy tail I will continue with the $(\delta_l, \delta_r)$ transformation.} 

The last column of Fig.\ \ref{fig:solar_flares} shows EDA for the Gaussianized data. Removing the heavy right tail  reveals a bimodal structure, which gives additional meaning to $\widehat{y}_{\min} = 323$. The Gaussianized cut-off value equals $W_{\widehat{\tau}}(323) = 121.16$ with the transformed standard deviation interval $[117.74, 123.37]$ (corresponding to $323 \pm 89$). Fitting a two component Gaussian mixture model to $\mathbf{x}_{\widehat{\tau}}$ yields $\widehat{\lambda} \, \mathcal{N}_1(67.10, 14.04^2) + (1 - \widehat{\lambda}) \, \mathcal{N}_2(113.12, 14.27^2)$ with $\widehat{\lambda} = 0.52$ and optimal decision boundary between classes of $90.48$. The mean of the larger component, $113.12$, lies within one standard deviation of the optimal Gaussianized cut-off $121.16$: for lower cut-offs the left-tail of the larger component -- or for much lower cut-offs even the smaller component -- would counteract the power-law decay of the upper gamma-ray count rates.\\

As mentioned above, this analysis is not intended to describe the underlying process of solar flare gamma rays; it should rather show new insights that can be gained by Gaussianizing. Future research based on these new findings might lead to new physical interpretations of the statistical properties gamma-ray count rates, see for example \citet{Aschwanden11}.

\section{Discussion and Outlook}
\label{sec:discussion_outlook}
I adapt the skewed Lambert W input / output framework to introduce heavy tails in continuous RVs $X \sim F_X(x)$. For Gaussian input this not only contributes to existing work on Tukey's $h$ distribution, but also gives convincing empirical results: unimodal data with heavy tails can be transformed to Gaussian data/RVs. Properties of a Gaussian model $\mathcal{M}_{\mathcal{N}}$ on the back-transformed data mimic the features of the ``true'' skewed, heavy-tailed model $\mathcal{M}_{G}$ very closely.

Since Gaussianity is the single most typical, and often required, assumption in many areas of statistics, machine learning, and signal processing, future research can take many directions. From a theoretical perspective properties of Lambert W $\times$ $F_X$ distributions viewed as a generalization of already well-known distributions $F_X$ can be studied. This area will profit from existing literature on the Lambert W function, which has been discovered only recently by the statistics community. Empirical work can focus on transforming the data and compare performances of approximate Gaussian versus joint heavy-tail analysis. The comparisons in this work showed that approximate inference for Gaussianized data is comparable with the direct heavy tail modeling, and so provides an easy tool to improve inference for heavy-tailed data in statistical practice.

\blinded{I also provide the R package \href{cran.r-project.org/web/packages/LambertW}{\texttt{LambertW}}, publicly available at \href{cran.r-project.org/}{\texttt{CRAN}}, to facilitate the use of Lambert W $\times$ $F_X$ distributions in practice.}
\blinded{
\subsection*{Acknowledgments}
I want to thank Andrew F.\ Siegel who brought Tukey's $h$ distribution to my attention, and Brian R.\ Dennis who gave detailed background information and suggestions on the solar flares dataset.
}

\cleardoublepage
\phantomsection
\addcontentsline{toc}{section}{References}
\bibliographystyle{chicago} 
\bibliography{../LambertW/LambertWdistribution}	

\cleardoublepage

\linenumbers
\appendix
\begin{center}
\LARGE Supplementary Material
\end{center}

\section{Auxiliary Results and Properties}
\label{sec:Auxiliary}
\subsection{Inverse Transformation $W_{\delta}(z)$}
The function $W_{\delta}(z)$ is the building block of Lambert W $\times$ $F_X$ distributions. This section lists useful properties of $W_{\delta}(z)$ as a function of $z$ as well as a function of $\delta$. 

\begin{properties}
\label{prop:identities_W_delta_0}
For $\delta = 0$,
\begin{equation}
W_{\delta}(z_i)\mid_{\delta = 0} = z_i, \quad W'(\delta z_i^2) \mid_{\delta = 0} = z_i^2, \quad \text{ and } W\left(\delta z_i^2 \right) \mid_{\delta = 0} = 0.
\end{equation}
\end{properties}

By definition $\frac{W_{\delta}(z)}{z} = e^{- \frac{ \delta }{2} W_{\delta}(z)^2 }$ and therefore 
\begin{equation}
\log \frac{W_{\delta}(z)}{z} = - \frac{ \delta }{2} W_{\delta}(z)^2 = -\frac{W(\delta z^2)}{2}.
\end{equation}

\begin{lemma}[Derivative of $W_{\delta}(z)$ with respect to $z$]
\label{lem:derivative_W_delta_wrt_z}
It holds
\begin{equation}
\frac{d}{dz} W_{\delta}\left(z\right) = - \frac{W_{\delta}\left(z\right)}{z \left( 1 + \delta W_{\delta}\left(z\right)^2 \right)} = e^{- \frac{1}{2} W(\delta z^2) } \frac{1}{1 + W(\delta z^2)}
\end{equation}
\end{lemma}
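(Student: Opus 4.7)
My approach is to differentiate the explicit formula \eqref{eq:W_delta} for $W_\delta(z)$ using the chain rule, then rewrite everything in the two advertised forms using (i) the standard differential identity for Lambert's $W$ and (ii) the defining identity $W_\delta(z)/z = e^{-\frac{\delta}{2} W_\delta(z)^2}$ recorded just before the lemma statement. Throughout I will treat $\delta > 0$ and $z \neq 0$; the cases $\delta = 0$ and $z = 0$ follow by continuity (using Properties \ref{prop:identities_W_delta_0}).

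First I would write $W_\delta(z) = \operatorname{sgn}(z)\bigl(W(\delta z^2)/\delta\bigr)^{1/2}$ and differentiate, noting that on $z>0$ and $z<0$ separately the sign is locally constant so the chain rule gives
\begin{equation*}
\frac{d}{dz} W_\delta(z) = \operatorname{sgn}(z) \cdot \frac{1}{2\sqrt{W(\delta z^2)/\delta}} \cdot \frac{W'(\delta z^2)\cdot 2\delta z}{\delta} = \frac{z\, W'(\delta z^2)}{W_\delta(z)},
\end{equation*}
where the last step uses $\operatorname{sgn}(z) \cdot z = |z|$ and $\sqrt{W(\delta z^2)/\delta} = |W_\delta(z)|$ together with $\operatorname{sgn}(W_\delta(z)) = \operatorname{sgn}(z)$.

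Next I invoke the well-known identity $W'(x) = W(x)/\bigl(x(1+W(x))\bigr)$ for Lambert's $W$ (a standard consequence of implicitly differentiating $W(x)e^{W(x)} = x$). Substituting $x = \delta z^2$ and then using the key algebraic identity $\delta\, W_\delta(z)^2 = W(\delta z^2)$ (which follows directly from the definition \eqref{eq:W_delta}) to replace $W(\delta z^2)$ in the numerator yields
\begin{equation*}
\frac{d}{dz}W_\delta(z) = \frac{z \cdot W(\delta z^2)}{W_\delta(z)\, \delta z^2 \bigl(1 + W(\delta z^2)\bigr)} = \frac{W_\delta(z)}{z\bigl(1+\delta W_\delta(z)^2\bigr)},
\end{equation*}
which is the first claimed form.

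For the second equality I would simply invoke the identity $W_\delta(z)/z = e^{-W(\delta z^2)/2}$ (equivalently $e^{-\frac{\delta}{2} W_\delta(z)^2}$) recalled in the preamble to the lemma, together with $1+\delta W_\delta(z)^2 = 1 + W(\delta z^2)$, to rewrite
\begin{equation*}
\frac{W_\delta(z)}{z\bigl(1+\delta W_\delta(z)^2\bigr)} = \frac{e^{-\frac{1}{2}W(\delta z^2)}}{1 + W(\delta z^2)}.
\end{equation*}
The only real obstacle is bookkeeping with signs around $z=0$, but since $W_\delta$ is defined by multiplication with $\operatorname{sgn}(z)$ and $W_\delta(0)=0$, the derivative extends continuously and the identity holds for all $z\in\R$ and $\delta\ge 0$ (with the $\delta=0$ case giving the trivial derivative $1$, matching both right-hand sides in the limit).
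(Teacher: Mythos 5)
Your proof is correct and follows essentially the same route as the paper: differentiate the explicit formula \eqref{eq:W_delta} via the chain rule, apply the standard identity $W'(x) = W(x)/\bigl(x(1+W(x))\bigr)$, and simplify using $W(\delta z^2) = \delta\, W_{\delta}(z)^2$ (your handling of the $\operatorname{sgn}(z)$ factor is in fact more careful than the paper's, which silently drops it). Note that the minus sign in the first expression of the lemma statement is a typo in the paper -- its own proof, your derivation, and the (positive) second expression all agree on $+\,W_{\delta}(z)/\bigl(z(1+\delta W_{\delta}(z)^2)\bigr)$.
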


\begin{proof}
One of the many interesting properties of the Lambert W function relates to its derivative which satisfies
\begin{equation}
\label{eq:d1W}
W'(z) = \frac{W(z)}{z (1+ W(z))} = \frac{1}{e^{W(z)} (1 + W(z))}, \quad z \neq 0, -1/e.
\end{equation}
Hence,
\begin{eqnarray}
\frac{d}{dz} \frac{W\left( \delta z^2 \right) }{\delta} &=& W'\left(\delta z^2\right) \cdot 2 z  = 
 \frac{ W\left( \delta z^2 \right)}{\delta z^2 \left( 1 + W\left( \delta z^2 \right) \right)} \cdot 2 z  =  \frac{2  W\left( \delta z^2 \right)}{\delta z \left( 1 + W\left( \delta z^2 \right) \right)}
\end{eqnarray}

Therefore,
\begin{eqnarray}
 \frac{d}{dz} W_{\delta}(z)	&=&  \frac{1}{2} \left( \frac{1}{\delta} W\left( \delta z^2 \right) \right)^{-1/2} \cdot \frac{d}{dz} \frac{W\left( \delta z^2 \right) }{\delta}  \\
 &= & \frac{1}{2} \left( \frac{1}{\delta} W\left( \delta z^2 \right) \right)^{-1/2} \cdot \frac{2 W\left( \delta z^2 \right)}{\delta z \left( 1 + W\left( \delta z^2 \right) \right)} \\
  &= & \frac{1}{\delta^{1/2}}  \left( W\left( \delta z^2 \right) \right)^{-1/2} \cdot \frac{W\left( \delta z^2 \right)}{z \left( 1 + W\left( \delta z^2 \right) \right)}
\end{eqnarray}

As $W\left( \delta z^2 \right) = \delta u^2$ the last line simplifies to
\begin{equation}
\frac{1}{\delta^{1/2}} \frac{1}{\delta^{1/2} u} \cdot \frac{\delta u^2}{z \left( 1 + \delta u^2 \right)} = \frac{u}{z \left( 1 + \delta u^2 \right)}.
\end{equation}
Now use again $u = W_{\delta}(z)$.
\end{proof}

\begin{lemma}[Derivative of $W_{\delta}(z)^2$ with respect to $\delta$]
\label{lem:derivative_W_delta_square}
For all $z \in \R$
\begin{align}
\partialx{ }{\delta} \left[ W_{\delta}(z) \right]^2 &= - \frac{1}{1 + W\left(\delta z^2\right)} W_{\delta}\left(z\right)^{4} \leq 0.
\end{align}

\end{lemma}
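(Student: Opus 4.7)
The plan is to reduce the derivative to a calculation about $W(\delta z^2)/\delta$, exploiting the identity $[W_\delta(z)]^2 = W(\delta z^2)/\delta$ that is immediate from the definition~\eqref{eq:W_delta} (the $\operatorname{sgn}(z)^2 = 1$ factor disappears upon squaring). This turns the lemma into an ordinary one-variable differentiation problem in $\delta$ with $z$ held fixed, and removes the need to worry about the sign of $z$.

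First I would set $w(\delta) := W(\delta z^2)$, so that the defining relation of Lambert's $W$ reads $w \, e^{w} = \delta z^2$. Implicit differentiation in $\delta$ yields $(1+w)\, e^{w}\, w' = z^2$, hence $w' = z^2/((1+w) e^{w})$. Using $e^{w} = \delta z^2 / w$ (valid whenever $z \neq 0$; the case $z = 0$ is trivial since both sides of the claim vanish), this simplifies to $w' = w/(\delta(1+w))$. Then by the quotient rule,
\begin{equation*}
\partialx{}{\delta}\left(\frac{w}{\delta}\right) = \frac{w' \delta - w}{\delta^{2}} = \frac{1}{\delta^2}\left(\frac{w}{1+w} - w\right) = -\frac{w^{2}}{\delta^{2}(1+w)}.
\end{equation*}

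Finally I would rewrite the right-hand side back in terms of $W_\delta(z)$: since $W_\delta(z)^2 = w/\delta$, we have $W_\delta(z)^4 = w^2/\delta^2$, giving
\begin{equation*}
\partialx{}{\delta}\left[W_\delta(z)\right]^{2} = -\frac{W_\delta(z)^{4}}{1 + W(\delta z^{2})},
\end{equation*}
which is the stated identity. The inequality $\le 0$ follows because for $\delta \geq 0$ and $z \in \R$ one has $\delta z^2 \ge 0$, so $W(\delta z^2) \ge 0$ on the principal branch, making the denominator positive while the numerator is a fourth power with a minus sign.

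I do not expect any real obstacle; the only subtlety is ensuring that the identity $e^{w} = \delta z^2/w$ is applied only where it is legitimate (i.e.\ $\delta z^2 \neq 0$), and handling the degenerate cases $z = 0$ or $\delta = 0$ separately, where both sides of the identity reduce to $0$ by Properties~\ref{prop:identities_W_delta_0}.
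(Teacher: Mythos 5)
Your proof is correct and follows essentially the same route as the paper: both write $\left[ W_{\delta}(z) \right]^2 = W(\delta z^2)/\delta$ and apply the quotient rule together with $W'(v) = W(v)/\bigl(v(1+W(v))\bigr)$, which you re-derive by implicit differentiation of $w e^{w} = \delta z^2$ rather than quoting the standard identity \eqref{eq:d1W}. One small slip in your closing remark: at $\delta = 0$ with $z \neq 0$ the two sides do not reduce to $0$ but to $-z^4$ (the right-hand side evaluates directly to $-z^4$ by Properties \ref{prop:identities_W_delta_0}, and the left-hand side is the one-sided limit of your formula as $\delta \rightarrow 0^{+}$); only the case $z = 0$ is genuinely degenerate with both sides vanishing.
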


\begin{proof}
By definition $\left[ W_{\delta}(z) \right]^2 = \frac{W(\delta z^2)}{\delta}$. Thus
\begin{align}
\partialx{ }{\delta}  \frac{W\left(\delta z^2\right)}{\delta} &= \frac{\delta \partialx{ }{\delta}  W\left(\delta z^2\right) -  W\left(\delta z^2\right) \cdot 1}{\delta^2} \\
&= \frac{\delta W'\left(\delta z^2\right) z^2 -  W\left(\delta z^2\right)}{\delta^2} \\
&=  \frac{\delta \frac{W\left(\delta z^2\right)}{\delta z^2 (1 + W\left(\delta z^2\right))} z^2 -  W\left(\delta z^2\right)}{\delta^2} \\
&=  \frac{\frac{W\left(\delta z^2\right)}{1 + W\left(\delta z^2\right)} -  W\left(\delta z^2\right)}{\delta^2} \\
&=  \frac{\frac{-W\left(\delta z^2\right)^2}{1 + W\left(\delta z^2\right)}}{\delta^2} \\
&= -  \frac{1}{1 + W\left(\delta z^2\right)} \left[W_{\delta}(z) \right]^4.
\end{align}
Since both terms are non-negative for all $z \in \R$, the result follows.
\end{proof}

That is $W_{\delta}(z)^2$ is a decreasing function in $\delta$ for every $z \in \R$, i.e.\ the more we remove heavy tails the more $z$ gets shrinked (non-linearly) towards $0 = \lim_{\delta \rightarrow \infty} W_{\delta}(z)$. In particular, $\left[ W_{\delta}(z) \right]^2 < z^2 \Leftrightarrow \frac{W_{\delta}(z)}{z} < 1$ and $\frac{W_{\delta + \varepsilon}(z)}{z} < \frac{W_{\delta}(z)}{z}$for $\delta \geq 0$ and $\varepsilon > 0$.

\begin{lemma}[Derivative of $W_{\delta}(z)$ with respect to $\delta$]
\label{lem:derivative_W_delta}
It holds
\begin{equation}
\partialx{}{\delta} W_{\delta}\left(z\right) = - \frac{1}{2} \frac{1}{1 + W\left(\delta z^2\right)} W_{\delta}\left(z\right)^{3}
\end{equation}
\end{lemma}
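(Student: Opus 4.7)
The plan is to obtain this derivative directly from Lemma \ref{lem:derivative_W_delta_square} by the chain rule, rather than differentiating the explicit formula $W_{\delta}(z) = \operatorname{sgn}(z)(W(\delta z^2)/\delta)^{1/2}$ from scratch (which would force us to reproduce essentially the same computation already done in that lemma).

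First I would write $[W_{\delta}(z)]^2 = W_{\delta}(z) \cdot W_{\delta}(z)$ and differentiate both sides with respect to $\delta$, using that $W_{\delta}(z)$ is differentiable in $\delta$ for $\delta > 0$ (this follows from the analytic formula $W_{\delta}(z) = \operatorname{sgn}(z)(W(\delta z^2)/\delta)^{1/2}$ and the known smoothness of Lambert's $W$ on $(0,\infty)$, since $\delta z^2 > 0$). Then the chain rule yields
\begin{equation*}
\partialx{}{\delta}[W_{\delta}(z)]^2 = 2\, W_{\delta}(z) \cdot \partialx{W_{\delta}(z)}{\delta}.
\end{equation*}

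Next I would substitute the expression from Lemma \ref{lem:derivative_W_delta_square}, namely $\partial_{\delta}[W_{\delta}(z)]^2 = -\frac{1}{1+W(\delta z^2)} W_{\delta}(z)^4$, and solve algebraically for $\partial_{\delta} W_{\delta}(z)$ by dividing through by $2W_{\delta}(z)$, yielding the claimed identity
\begin{equation*}
\partialx{W_{\delta}(z)}{\delta} = -\frac{1}{2}\frac{1}{1+W(\delta z^2)}W_{\delta}(z)^3.
\end{equation*}

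The only subtle point is the division by $W_{\delta}(z)$, which requires $z \neq 0$. For $z = 0$ we have $W_{\delta}(0) = 0$ for all $\delta \geq 0$, so both sides of the target equation are trivially zero and the identity holds by inspection (or by continuity in $z$). Thus I expect no real obstacle; this lemma is essentially an immediate corollary of Lemma \ref{lem:derivative_W_delta_square}.
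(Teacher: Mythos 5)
Your proposal is correct and is essentially the paper's own proof: the paper also reduces the claim to the identity $\partialx{W_{\delta}(z)}{\delta} = \frac{1}{2}\frac{1}{W_{\delta}(z)}\partialx{}{\delta}\left[W_{\delta}(z)\right]^2$ and then invokes Lemma \ref{lem:derivative_W_delta_square}, merely arriving at that identity by differentiating $\operatorname{sgn}(z)\left(W(\delta z^2)/\delta\right)^{1/2}$ rather than by your product-rule rearrangement. Your explicit handling of the $z=0$ case is a small bonus of care that the paper omits.
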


\begin{proof}

\begin{align}
\partialx{}{\delta} W_{\delta}\left(z\right) &= \operatorname{sgn}(z) \partialx{ }{\delta}  \left( \frac{W\left(\delta z^2\right)}{\delta} \right)^{1/2}  \\
&= \operatorname{sgn}(z) \frac{1}{2}  \left( \frac{W\left(\delta z^2\right)}{\delta} \right)^{-1/2} \partialx{ }{\delta}  \frac{W\left(\delta z^2\right)}{\delta} \\
&= \frac{1}{2} \frac{1}{W_{\delta}(z)} \partialx{ }{\delta} \left[ W_{\delta}(z) \right]^2 \\
&= - \frac{1}{2} \frac{1}{1 + W\left(\delta z^2\right)} W_{\delta}\left(z\right)^{3},
\end{align}
where the last line follows by Lemma \ref{lem:derivative_W_delta_square}.
\end{proof}

\subsection{Penalty $\log R\left( \delta \mid  z_i \right)$ for Standard Gaussian Input}
For $\mu_x = 0$ and $\sigma_x = 1$ the penalty equals ($y_i = z_i$)
\begin{align}
\label{eq:R_penalty_01}
R\left( \delta \mid  z_i \right) & = \frac{W_{\delta}\left( z_i \right)}{ z_i \left[ 1 + \delta \left( W_{\delta}\left(z_i\right) \right)^2 \right]} = \frac{W_{\delta}\left( z_i \right)}{ z_i \left[ 1 + W\left(\delta z_i^2 \right) \right]}
\end{align}
and thus
\begin{align}
\log R\left( \delta \mid  z_i \right) & = \log \frac{W_{\delta}\left( z_i \right)}{z_i} - \log \left[ 1 + W\left(\delta z_i^2 \right) \right] \\
& = - \frac{W(\delta z_i^2 )}{2} - \log \left[ 1 + W\left(\delta z_i^2 \right) \right]
\end{align}

\begin{lemma}[Derivative of $\log R\left( \delta \mid  z \right)$ with respect to $\delta$]
\label{lem:deriv_penalty}
For all $\delta \geq 0$ and all $z \in \R$
\begin{equation}
\label{eq:deriv_penalty}
\partialx{\log R\left( \delta \mid  z \right)}{\delta}  = - z^2 W'(\delta z^2) \left( \frac{1}{2} + \frac{1}{1 + W\left(\delta z^2 \right)} \right) \leq 0.
\end{equation}
\end{lemma}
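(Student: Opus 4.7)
The plan is to differentiate the two-term expression for $\log R(\delta \mid z)$ given just above the lemma, namely
\begin{equation*}
\log R(\delta \mid z) = -\tfrac{1}{2} W(\delta z^2) - \log\bigl[1 + W(\delta z^2)\bigr],
\end{equation*}
term by term using the chain rule, and then to read off non-positivity from the signs of the resulting factors. The only ``moving part'' inside is $W(\delta z^2)$, whose derivative with respect to $\delta$ is simply $z^2 W'(\delta z^2)$, so no nontrivial manipulation of $W_\delta$ is needed here; the heavier identities (Lemmas \ref{lem:derivative_W_delta_wrt_z}--\ref{lem:derivative_W_delta}) are not required for this statement.

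Concretely, I would first compute
\begin{equation*}
\partialx{}{\delta}\Bigl(-\tfrac{1}{2} W(\delta z^2)\Bigr) = -\tfrac{1}{2} z^2 W'(\delta z^2),
\end{equation*}
and then
\begin{equation*}
\partialx{}{\delta}\Bigl(-\log[1+W(\delta z^2)]\Bigr) = -\frac{z^2 W'(\delta z^2)}{1+W(\delta z^2)}.
\end{equation*}
Adding these and factoring out $-z^2 W'(\delta z^2)$ yields exactly the claimed identity \eqref{eq:deriv_penalty}.

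For the inequality, I would argue as follows. Since $\delta \geq 0$ and $z^2 \geq 0$, we have $\delta z^2 \geq 0$, so $W(\delta z^2)$ refers to the principal branch and satisfies $W(\delta z^2) \geq 0$ (with equality iff $\delta z^2 = 0$). Using the standard identity \eqref{eq:d1W},
\begin{equation*}
W'(\delta z^2) = \frac{W(\delta z^2)}{\delta z^2\,(1+W(\delta z^2))} \geq 0
\end{equation*}
on $\delta z^2 > 0$, and $W'(0) = 1 > 0$ by continuity. Meanwhile $\tfrac{1}{2} + \tfrac{1}{1+W(\delta z^2)} > 0$ throughout. Hence $z^2 W'(\delta z^2)\bigl(\tfrac{1}{2}+\tfrac{1}{1+W(\delta z^2)}\bigr) \geq 0$, and the leading minus sign gives $\partial_\delta \log R(\delta \mid z) \leq 0$, with equality only in the degenerate cases $z=0$ or $\delta = 0$ combined with the boundary behaviour of $W'$ at $0$.

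I do not expect any serious obstacle: the lemma is essentially a chain-rule computation followed by a sign check. The only subtlety is being explicit that we remain on the principal branch of $W$ (which is justified by the standing assumption $\delta \geq 0$ noted earlier in the paper), so that the formulas $W \geq 0$ and $W' \geq 0$ on $[0,\infty)$ apply without qualification.
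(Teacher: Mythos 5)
Your proof is correct, and it takes a slightly different and in fact more economical route than the paper's. The paper writes $\log R(\delta \mid z) = \log\frac{W_{\delta}(z)}{z} - \log\left[1+W(\delta z^2)\right]$ and differentiates the first term via Lemma \ref{lem:derivative_W_delta} (which in turn rests on Lemma \ref{lem:derivative_W_delta_square}), arriving at $-\frac{1}{1+W(\delta z^2)}\left(\frac{1}{2}W_{\delta}(z)^2 + z^2 W'(\delta z^2)\right)$ and then re-substituting $W'(\delta z^2)=\frac{W(\delta z^2)}{\delta z^2(1+W(\delta z^2))}$ to reach \eqref{eq:deriv_penalty}. You instead start from the already-reduced form $\log R(\delta\mid z) = -\frac{1}{2}W(\delta z^2)-\log\left[1+W(\delta z^2)\right]$ (which the paper itself displays just before the lemma) and apply the chain rule with $\partialx{}{\delta}W(\delta z^2)=z^2W'(\delta z^2)$; this bypasses the $W_{\delta}$-derivative machinery entirely and lands on \eqref{eq:deriv_penalty} without any re-factoring. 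The paper's route has the mild virtue of reusing Lemma \ref{lem:derivative_W_delta_square}, which is needed anyway for Lemma \ref{lem:deriv_Gaussian_loglik}; yours is shorter and more self-contained for this particular statement. A further small point in your favour: you make the non-positivity explicit (principal branch, $W\geq 0$ and $W'\geq 0$ on $[0,\infty)$ via \eqref{eq:d1W}, and $W'(0)=1$), whereas the paper's proof leaves the sign check implicit.
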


\begin{proof} 
We have
\begin{align}
\partialx{\log R\left( \delta \mid  z \right)}{\delta} & = \frac{1}{W_{\delta}\left( z \right)} \partialx{W_{\delta}\left( z \right)}{\delta} - \frac{1}{1 + W(\delta z^2)} W'(\delta z^2) z^2 \\
& \stackrel{\text{Lemma \ref{lem:derivative_W_delta}}}{=}  \frac{1}{W_{\delta}\left( z \right)} \left( - \frac{1}{2} \frac{1}{1 + W\left(\delta z^2\right)} W_{\delta}\left(z\right)^{3} \right) - \frac{1}{1 + W(\delta z^2)} W'(\delta z^2) z^2 \\
& = - \frac{1}{1 + W(\delta z^2)} \left( \frac{1}{2} W_{\delta}\left(z\right)^{2}  + W'(\delta z^2) z^2 \right)
\end{align}

Using $W'(\delta z^2) = \frac{W(\delta z^2)}{\delta z^2 (1 + W(\delta z^2))}$ and re-factorizing gives \eqref{eq:deriv_penalty}.

\end{proof}

\subsection{Gaussian log-Likelihood at $W_{\delta}(z)$}

\begin{lemma}[Derivative of the Gaussian log-likelihood at $W_{\delta}(z)$]
\label{lem:deriv_Gaussian_loglik}
For all $z \in \R$ and for $\delta \geq 0$
\begin{equation}
\partialx{ }{\delta}  \ell(\mu_x = 0, \sigma_x = 1 \mid W_{\delta}(z))  = \frac{1}{2} \frac{1}{1 + W\left(\delta z^2\right)} \left[ W_{\delta}\left(z\right) \right]^{4} \geq 0. 
\end{equation}
\end{lemma}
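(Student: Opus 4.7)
The plan is immediate given the earlier auxiliary lemmas. The standard Gaussian log-likelihood at a single point $u$ with $\mu_x=0$, $\sigma_x=1$ equals
\[
\ell(0,1\mid u) = -\tfrac{1}{2}\log(2\pi) - \tfrac{1}{2} u^2,
\]
so the only $\delta$-dependence, after substituting $u = W_{\delta}(z)$, sits in the quadratic term $u^2 = [W_{\delta}(z)]^2$. Thus
\[
\partialx{}{\delta}\,\ell\bigl(0,1\mid W_{\delta}(z)\bigr) \;=\; -\tfrac{1}{2}\,\partialx{}{\delta}\bigl[W_{\delta}(z)\bigr]^2.
\]

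Next I would invoke Lemma \ref{lem:derivative_W_delta_square}, which gives exactly
\[
\partialx{}{\delta}\bigl[W_{\delta}(z)\bigr]^2 \;=\; -\,\frac{1}{1 + W(\delta z^2)}\,\bigl[W_{\delta}(z)\bigr]^4.
\]
Substituting this into the previous display and cancelling the two minus signs yields
\[
\partialx{}{\delta}\,\ell\bigl(0,1\mid W_{\delta}(z)\bigr) \;=\; \frac{1}{2}\,\frac{1}{1 + W(\delta z^2)}\,\bigl[W_{\delta}(z)\bigr]^4,
\]
which is the claimed identity.

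For non-negativity, note that for $\delta \geq 0$ and $z \in \R$ we have $\delta z^2 \geq 0$, so by the standard properties of the principal branch of Lambert's $W$ recalled in Section \ref{sec:Auxiliary} (with $W(0)=0$ and $W$ increasing on $[0,\infty)$), $W(\delta z^2) \geq 0$ and hence $1 + W(\delta z^2) \geq 1 > 0$. The factor $[W_{\delta}(z)]^4$ is a fourth power and therefore nonnegative. Both factors being nonnegative, the whole expression is $\geq 0$, which concludes the proof. There is no real obstacle here — the statement is a direct algebraic consequence of Lemma \ref{lem:derivative_W_delta_square} combined with the chain rule applied to the one-point Gaussian log-density.
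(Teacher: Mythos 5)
Your proof is correct and follows essentially the same route as the paper: write the standard Gaussian log-density at $W_{\delta}(z)$ as a constant minus $\tfrac{1}{2}[W_{\delta}(z)]^2$ and then apply Lemma \ref{lem:derivative_W_delta_square}. You simply spell out the substitution and the non-negativity check that the paper leaves implicit.
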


\begin{proof}
The $\log$ of the standard Gaussian pdf evaluated at $W_{\delta}(z)$ simplifies to
\begin{align}
\log \frac{1}{\sqrt{2 \pi}} e^{-\frac{1}{2} \left[ W_{\delta}(z) \right]^2} = \log  \frac{1}{\sqrt{2 \pi}} - \frac{1}{2}  \left[ W_{\delta}(z) \right]^2.
\end{align}
The rest follows by Lemma \ref{lem:derivative_W_delta_square}.
\end{proof}

Lemma \ref{lem:deriv_Gaussian_loglik} shows that increasing $\delta$ always increases the input log-likelihood $\ell(\delta \mid  \mathbf{u}_{\delta} = W_{\delta}(\mathbf{z}))$ - see also Fig.\ \ref{fig:sample_loglik_decomposition}. For $\delta \rightarrow \infty$ the Gaussianized $\mathbf{u}_{\delta}$ goes to $\mathbf{0}$, which clearly maximizes the Gaussian likelihood if $\mu = 0$.

\section{Proofs}
\label{sec:proofs}
\subsection{Inverse transformation}

\begin{proof}[Proof of Lemma \ref{lem:inverse_trafo_h}]

Without loss of generality assume that $\mu_x = 0$ and $\sigma_x =1$. Squaring  \eqref{eq:Tukey_h} and multiplying by $\delta$ yields
\begin{eqnarray}
\label{eq:delta_Z2}
\delta Z^2 &= & \delta U^2 \exp\left( \delta U^2 \right)
\end{eqnarray}

The inverse of \eqref{eq:delta_Z2} is by definition Lambert's $W(z)$ function \citep{Rosenlicht69}
\begin{equation}
W(z) \exp W(z) = z, \quad z \in \C.
\end{equation}
$W(z)$ is bijective for $z \geq 0$. Since $\delta U^2 \geq 0$ for all $\delta \geq 0$, applying $W(\cdot)$ to \eqref{eq:delta_Z2}, dividing by $\delta$, and taking the square root gives
\begin{eqnarray}
U &=& \pm \sqrt{\frac{W\left( \delta Z^2 \right)}{\delta}}
\end{eqnarray}
Since $\exp \left( \frac{\delta}{2} U^2 \right) > 0$ for all $\delta \in \R$ and all $U$, it follows that $Z = U \exp \left( \delta/2 U^2 \right)$ and $U$ must have the same sign, which concludes the proof.
\end{proof}

\subsection{Cdf and pdf}

\begin{proof}[Proof of Theorem \ref{theorem:cdf_Y}]
By definition,
\begin{align}
G_Y(y) &= \Prob (Y \leq y) = \Prob \left( \left\{ U \exp \left(\frac{\delta}{2} U^2 \right) \right\} \sigma_x + \mu_x \leq y \right) \\
& = \Prob \left(  U \exp \left(\frac{\delta}{2} U^2 \right)  \leq z \right) = \Prob\left( U \leq W_{\delta}(z) \right) \\
& =  F_U\left( U \leq W_{\delta}(z) \right).
\end{align}

Taking the derivative with respect to $y$ gives
\begin{eqnarray}
\frac{d}{dy} G_Y(y \mid \boldsymbol \beta, \delta) &=& f_X(W_{\delta}(z) \sigma_x + \mu_x \mid \boldsymbol \beta) \cdot \sigma_x \frac{d}{dy} W_{\delta}\left( \frac{y - \mu_x}{\sigma_x} \right) \\
&=& f_U(W_{\delta}(z) \mid \boldsymbol \beta) \cdot \sigma_x \frac{1}{\sigma_x} \frac{d}{dz} W_{\delta}\left( \frac{y - \mu_x}{\sigma_x} \right) \\
&=& f_U(W_{\delta}(z) \mid \boldsymbol \beta) \cdot \frac{d}{dz} W_{\delta}\left( z \right).
\end{eqnarray}

Using Lemma \ref{lem:derivative_W_delta_wrt_z} yields \eqref{eq:pdf_LambertW_Y}.
\end{proof}

\subsection{MLE for $\delta$}

\begin{lemma}[Derivative of the Lambert W $\times$ Gaussian log-likelihood]
\label{lem:derivative_likelihood_full}
We have
\begin{align}
\label{eq:MLE_deriv_equation}
D(\delta \mid \mathbf{z}) := \partialx{}{\delta} \ell(\delta \mid \mathbf{z}) & = \sum_{i=1}^{N} z_i^2 W'(\delta z_i^2) \left(\frac{1}{2} W_{\delta}\left(z_i\right)^{2} - \left( \frac{1}{2} + \frac{1}{1 + W\left(\delta z_i^2 \right)} \right) \right)  \\
& = \frac{1}{2} \sum_{i=1}^{N} \frac{W_{\delta}\left(z_i\right)^{4}}{1 + \delta W_{\delta}\left(z_i\right)^{2}} -  \sum_{i=1}^{N} \frac{W_{\delta}\left(z_i\right)^{2}}{1 + \delta W_{\delta}\left(z_i\right)^{2}} \left( \frac{1}{2} + \frac{1}{1 + \delta W_{\delta}\left(z_i\right)^{2}} \right) \\
\label{eq:MLE_deriv_equation_2}
& = \frac{1}{2} \sum_{i=1}^{N} \frac{W_{\delta}\left(z_i\right)^{4}}{1 + W(\delta z_i^2)} - \sum_{i=1}^{N} \frac{W_{\delta}\left(z_i\right)^{2}}{1 + W(\delta z_i^2)} \left( \frac{1}{2} + \frac{1}{1 + W\left(\delta z_i^2 \right)} \right).
\end{align}
\end{lemma}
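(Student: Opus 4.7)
The plan is to exploit the additive decomposition \eqref{eq:loglikelihood_Y_with_xi} of the log-likelihood specialised to standard Gaussian input $\mu_x=0$, $\sigma_x=1$, namely
\begin{equation*}
\ell(\delta \mid \mathbf{z}) \;=\; \sum_{i=1}^{N}\log f_U\!\bigl(W_{\delta}(z_i)\bigr) \;+\; \sum_{i=1}^{N}\log R(\delta \mid z_i),
\end{equation*}
and then differentiate each of the two summands using results already collected in the appendix. This reduces the problem to putting Lemmas \ref{lem:deriv_Gaussian_loglik} and \ref{lem:deriv_penalty} together and rewriting the outcome in the three equivalent forms stated.

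First I would invoke Lemma \ref{lem:deriv_Gaussian_loglik} to get
\begin{equation*}
\partialx{}{\delta}\sum_{i=1}^{N}\log f_U(W_{\delta}(z_i)) \;=\; \frac{1}{2}\sum_{i=1}^{N}\frac{W_{\delta}(z_i)^{4}}{1+W(\delta z_i^2)},
\end{equation*}
and Lemma \ref{lem:deriv_penalty} to get
\begin{equation*}
\partialx{}{\delta}\sum_{i=1}^{N}\log R(\delta \mid z_i) \;=\; -\sum_{i=1}^{N} z_i^2 W'(\delta z_i^2)\left(\frac{1}{2}+\frac{1}{1+W(\delta z_i^2)}\right).
\end{equation*}
Adding these directly yields the third form \eqref{eq:MLE_deriv_equation_2} provided I also rewrite the Gaussian contribution as $\tfrac{1}{2} z_i^2 W'(\delta z_i^2) W_{\delta}(z_i)^2$. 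The identity that makes this work is the Lambert W derivative formula \eqref{eq:d1W}, which gives
\begin{equation*}
z^2 W'(\delta z^2) \;=\; \frac{W(\delta z^2)}{\delta\,(1+W(\delta z^2))} \;=\; \frac{W_{\delta}(z)^{2}}{1+W(\delta z^2)},
\end{equation*}
using $W_{\delta}(z)^2 = W(\delta z^2)/\delta$ from \eqref{eq:W_delta}. Multiplying both sides by $\tfrac{1}{2}W_{\delta}(z)^2$ shows that the Gaussian derivative can be absorbed into the same $z_i^2 W'(\delta z_i^2)$ factor that appears in the penalty derivative, delivering the compact first expression \eqref{eq:MLE_deriv_equation}.

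To obtain the middle form, I would use the same identity in the reverse direction together with $1+W(\delta z^2) = 1+\delta W_{\delta}(z)^2$, so that every $z_i^2 W'(\delta z_i^2)$ is replaced by $W_{\delta}(z_i)^2/(1+\delta W_{\delta}(z_i)^2)$; this step is purely algebraic. No significant obstacle is expected: all the heavy lifting, including the differentiation of $W_{\delta}(z)$ and $W_{\delta}(z)^2$ in $\delta$, is already done by Lemmas \ref{lem:derivative_W_delta_square}, \ref{lem:derivative_W_delta}, \ref{lem:deriv_penalty} and \ref{lem:deriv_Gaussian_loglik}. The only bookkeeping care required is to track the two equivalent expressions $1+W(\delta z^2) = 1+\delta W_{\delta}(z)^2$ consistently so that the three displayed forms match term by term.
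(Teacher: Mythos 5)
Your proposal is correct and follows essentially the same route as the paper, which likewise proves the lemma by splitting $\partialx{}{\delta}\ell(\delta\mid\mathbf{z})$ into the Gaussian input term and the penalty term and applying Lemmas \ref{lem:deriv_Gaussian_loglik} and \ref{lem:deriv_penalty}. Your additional algebra relating $z^2 W'(\delta z^2)$, $W_{\delta}(z)^2/(1+W(\delta z^2))$, and $1+\delta W_{\delta}(z)^2$ correctly fills in the conversions between the three displayed forms that the paper leaves implicit.
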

\begin{proof}
Apply Lemmas \ref{lem:deriv_penalty} and \ref{lem:deriv_Gaussian_loglik} to $\partialx{}{\delta} \ell(\delta \mid \mathbf{z}) = \partialx{ }{\delta} \log R\left( \delta \mid  z \right) + \partialx{ }{\delta}  \ell(\mu_x = 0, \sigma_x = 1 \mid W_{\delta}(z))$.
\end{proof}

\begin{proof}[Proof sketch of Theorem \ref{thm:MLE_delta}]

\begin{enumerate}[a)]
\item If condition \eqref{eq:cond_delta_MLE_greater_0} holds, then $D(\delta \mid \mathbf{z}) < 0$ at $\delta  = 0$ and stays negative for all $\delta > 0$. Hence the maximizer occurs at the boundary $\delta = 0$.
\item If \eqref{eq:cond_delta_MLE_greater_0} does not hold, then $D(\delta = 0 \mid \mathbf{z}) > 0$, decreases in $\delta$ and crosses the zero line (one candidate for $\widehat{\delta}_{MLE}$ occurs here). 
\item As $\delta$ gets larger, $D(\delta \mid \mathbf{z})$ reaches a minimum (negative value) and starts increasing. However, for $\delta \rightarrow \infty$ the derivative approaches zero from below and never equals zero again; thus $\widehat{\delta}_{MLE}$ is unique.
\end{enumerate}
\end{proof}

\begin{proof}[Proof of Theorem \ref{thm:MLE_delta}] 

\begin{enumerate}[a)]
\item The log-likelihood is increasing at $\delta = 0$ if and only if (set $\delta = 0$ in \eqref{eq:MLE_deriv_equation_2} and use Property \ref{prop:identities_W_delta_0})
\begin{align}
\label{eq:MLE_deriv_equation_delta0}
\sum_{i=1}^{N} z_i^4 > 3 \sum_{i=1}^{N} z_i^2.
\end{align} 
Eq.\ \eqref{eq:MLE_deriv_equation_delta0} means that transforming the data (choosing $\widehat{\delta} > 0$) increases the overall likelihood only if the data is heavy-tailed enough. Note that the sum of squares is not squared again. Hence condition \eqref{eq:MLE_deriv_equation_delta0} is not equivalent for the data having empirical kurtosis larger than $3$.

\item  If \eqref{eq:MLE_deriv_equation_delta0} does not hold, then $\widehat{\delta}_{MLE}$ must satisfy $D(\delta \mid \mathbf{z}) \mid_{\delta = \widehat{\delta}_{MLE}} = 0$ from \eqref{eq:MLE_deriv_equation} in Lemma \ref{lem:derivative_likelihood_full}. It remains to be shown that this equation has (at least) one positive solution.
\begin{enumerate}[i)]
\item Since $\lim_{\delta \rightarrow \infty} W_{\delta}(z) = 0$ for all $z \in \R$, \eqref{eq:MLE_deriv_equation_2} is also true in the limit; however, we can ignore this solution as we require $\widehat{\delta}_{MLE} \in \R$.
\item By continuity and $\lim_{\delta \rightarrow \infty} W_{\delta}(z) = 0$, for sufficiently large $\delta_M$, $W_{\delta_M}(z_i) < 1$ for all $z_i \in \R$.  Hence $W_{\delta_M}(z_i)^4 < W_{\delta_M}(z_i)^2$ and therefore
\begin{align}
\frac{1}{2} \sum_{i=1}^{N} \frac{W_{\delta}\left(z_i\right)^{4}}{1 + \delta W_{\delta}\left(z_i\right)^{2}} &< \frac{1}{2} \sum_{i=1}^{N} \frac{W_{\delta}\left(z_i\right)^{2}}{1 + \delta W_{\delta}\left(z_i\right)^{2}} \\
 &<  \sum_{i=1}^{N} \frac{W_{\delta}\left(z_i\right)^{2}}{1 + \delta W_{\delta}\left(z_i\right)^{2}} \left( \frac{1}{2} + \frac{1}{1 + \delta W_{\delta}\left(z_i\right)^{2}} \right)  \text{ for } \delta \geq \delta_M,
\end{align}
showing that $D(\delta \mid \mathbf{z}) \mid_{\delta \geq \delta_M} < 0$. That is, $D(\delta \mid \mathbf{z})$ approaches $0$ from below for $\delta \rightarrow \infty$.
\item By continuity and $D(\delta \mid \mathbf{z}) \mid_{\delta = 0} > 0$ (if \eqref{eq:MLE_deriv_equation_delta0} does not hold), it must cross the $D(\delta \mid \mathbf{z}) = 0$ line at least once in the interval $(0, \delta_M)$, proving the existence of $\widehat{\delta}_{MLE}$.
\end{enumerate}

\item The log-likelihood can be decomposed in
\begin{equation}
\ell \left( \delta \mid \mathbf{z} \right) \propto \underbrace{-\frac{1}{2} \sum_{i=1}^{N} \left[ W_{\delta}(z_i) \right]^2}_{\ell(\mu_x = 0, \sigma_x = 1 \mid W_{\delta}(\mathbf{z}))} + \underbrace{ \sum_{i=1}^{N} \log \frac{W_{\delta}\left( z_i \right)}{z_i} - \log  \left[ 1 + W\left(\delta z_i^2 \right) \right] }_{\mathcal{R}(\delta \mid \mathbf{z})}.
\end{equation}
Lemmas \ref{lem:deriv_penalty} and \ref{lem:deriv_Gaussian_loglik} show that $\mathcal{R}(\delta \mid \mathbf{z})$ is monotonically decreasing and $\ell(\mu_x = 0, \sigma_x = 1 \mid W_{\delta}(\mathbf{z}))$ is monotonically increasing in $\delta$.  

Furthermore, $\lim_{\delta \rightarrow \infty} \ell(\mu_x = 0, \sigma_x = 1 \mid W_{\delta}(\mathbf{z})) = 0$, that is the input likelihood is monotonically increasing but bounded from above (by $0 = \log 1$). On the other hand the penalty is decreasing without bounds, $\lim_{\delta \rightarrow \infty} \mathcal{R}(\delta \mid \mathbf{z}) = -\infty$. Thus their sum attains a global maximum either at the unique mode of $\ell \left( \delta \mid \mathbf{z} \right)$ or at the boundary $\delta = 0$ - see also Fig. \ref{fig:sample_loglik_decomposition}.

\end{enumerate}

\end{proof}

\section{Details on IGMM}
\label{sec:details_IGMM}

Here I present an iterative method to obtain $\widehat{\tau}$, which builds on the input/output aspect and theoretical properties of the input $X$. For example, if a random variable should be exponentially distributed (e.g.\ waiting times), but the observed data shows heavier tails then it is natural to estimate $\sigma_x = \lambda^{-1}$ and $\delta$ such that the back-transformed data has skewness $2$, as this is a particular property of exponential RVs - independent of the rate parameter $\lambda$; to remove heavy tails in $\mathbf{y}$ we should choose $\tau$ such that the back-transformed data $\mathbf{x}_{\tau}$ has sample kurtosis $3$; or for uniform input, we can try to find a $\tau$ such that $\mathbf{x}_{\tau}$ has a flat density estimate.

Here I describe the estimator for $\tau$ to remove heavy-tails in location-scale data, in the sense that the kurtosis of the input equals $3$. It can be easily adapted to match other properties of the input as outlined above.\\

For a moment assume that $\mu_x = \mu_x^{(0)}$ and $\sigma_x = \sigma_x^{(0)}$ are known and fixed; only $\delta$ has to be estimated. A natural choice for $\delta$ is the one that results in back transformed data $\mathbf{x}_{\tau}$ ($\tau = (\mu_x^{(0)}, \sigma_x^{(0)}, \delta)$) with sample kurtosis $\widehat{\gamma}_2(\mathbf{x}_{\tau})$ equal to the theoretical kurtosis $\gamma_2(X)$. Formally, 
\begin{equation}
\label{eq:delta_GMM}
\widehat{\delta}_{\textsc{GMM}} = \arg \min_{\delta} \vnorm{ \gamma_2(X) - \widehat{\gamma}_2(\mathbf{x}_{\tau}) },
\end{equation}
where $\vnorm{\cdot}$ is a proper norm in $\R$.

While the concept of this estimator is identical to its skewed version \citep{GMGLambertW_Skewed}, it has one important advantage: the inverse transformation is bijective. Thus here we do not have to consider ``lost'' data points when applying the inverse transformation.

\begin{algorithm}[t]
  \caption{\label{alg:delta_GMM} Find optimal $\delta$\blinded{: function \texttt{delta\_GMM($\cdot$)} in the \texttt{LambertW} package.}}

  \begin{algorithmic}[1]
	\renewcommand{\algorithmicrequire}{\textbf{Input:}}
	\renewcommand{\algorithmicensure}{\textbf{Output:}}
\REQUIRE standardized data vector $\mathbf{z}$; theoretical kurtosis $\gamma_2(X)$
\ENSURE $\widehat{\delta}_{GMM}$ as in \eqref{eq:delta_GMM}

    \medskip
\STATE $\widehat{\delta}_{GMM} = \arg \min_{\delta} \vnorm{\widehat{\gamma}_2(\mathbf{u}_{\delta}) - \gamma_2(X)}$, where $\mathbf{u}_{\delta} = W_{\delta}(\mathbf{z})$ subject to $\delta \geq 0$
  \RETURN $\widehat{\delta}_{GMM}$
  \end{algorithmic}  
\end{algorithm}

\paragraph{Discussion of Algorithm \ref{alg:delta_GMM}:}
The kurtosis of $Y$ as a function of $\delta$ is continuous and monotonically increasing (see \eqref{eq:kurt_Y_delta}). Also $u = W_{\delta}(z)$ has a smaller slope than the identity $u = z$, and the slope is decreasing as $\delta$ is increasing. Thus if the kurtosis of the original data is larger than the target kurtosis of the back-transformed data, $\widehat{\gamma}_2(\mathbf{y}) > \gamma_2(X)$, then there always exists a $\delta^{(*)}$ that achieves $ \widehat{\gamma}_2(\mathbf{x}_{\tau^*}) \equiv \gamma_2(X)$.  By the re-parametrization $\tilde{\delta} = \log \delta$ the bounded optimization problem can be solved by standard (unbounded) optimization algorithms.\\

In practice, $\mu_x$ and $\sigma_x$ are rarely known but also have to be estimated from the data. As $\mathbf{y}$ is shifted and scaled \emph{ahead of} the back-transformation $W_{\delta}(\cdot)$, the initial choice of $\mu_x$ and $\sigma_x$ affects the optimal choice of $\delta$. Therefore the optimal triple $\widehat{\tau} = (\widehat{\mu}_x, \widehat{\sigma}_x, \widehat{\delta})$ must be obtained iteratively. 

\begin{algorithm}[t]
  \caption{\label{alg:IGMM} Iterative Generalized Method of Moments (IGMM) \blinded{: function \texttt{IGMM($\cdot$)} in the \texttt{LambertW} package.}}

  \begin{algorithmic}[1]
	\renewcommand{\algorithmicrequire}{\textbf{Input:}}
	\renewcommand{\algorithmicensure}{\textbf{Output:}}
\REQUIRE  data vector $\mathbf{y}$; tolerance level $tol$; theoretical kurtosis $\gamma_2(X)$
\ENSURE IGMM parameter estimate $\widehat{\tau}_{\textsc{IGMM}} = (\widehat{\mu}_x, \widehat{\sigma}_x, \widehat{\delta})$

    \medskip
    \STATE Set $\tau^{(-1)} = (0,0,0)$
    \STATE Starting values: $\tau^{(0)} = (\mu_x^{(0)}, \sigma_x^{(0)},\delta^{(0)})$, where $\mu_x^{(0)} = \tilde{\mathbf{y}}$ and $\sigma_x^{(0)} = \overline{\sigma}_y \cdot \left( \frac{1}{\sqrt{(1-2 \delta^{(0)})^{3/2}}} \right)^{-1}$ are the sample median and standard deviation of $\mathbf{y}$ divided by the standard deviation factor (see also \eqref{eq:Y_moments}), respectively. $\delta^{(0)} = \frac{1}{66} \left( \sqrt{66 \widehat{\gamma}_2(\mathbf{y}) - 162} - 6 \right)$ $\rightarrow$ see \eqref{eq:delta_Taylor_rule} for details.
    \STATE $k=0$
    \WHILE{$\vnorm{\tau^{(k)} - \tau^{(k-1)}} > tol$}
    \STATE $\mathbf{z}^{(k)} = (\mathbf{y} - \mu_x^{(k)})/\sigma_x^{(k)}$
    \STATE Pass $\mathbf{z}^{(k)}$ to Algorithm \ref{alg:delta_GMM} $\longrightarrow \delta^{(k+1)}$
    \STATE back-transform $\mathbf{z}^{(k)}$ to $\mathbf{u}^{(k+1)} = W_{\delta^{(k+1)}}(\mathbf{z}^{(k)})$; compute $\mathbf{x}^{(k+1)} = \mathbf{u}^{(k+1)} \, \sigma_x^{(k)} + \mu_x^{(k)}$
    \STATE Update parameters: $\mu_x^{(k+1)} = \overline{\mathbf{x}}_{k+1}$ and $\sigma_x^{(k+1)} = \widehat{\sigma}_{x_{k+1}}$  \label{line:set_new_theta}
    \STATE $\tau^{(k+1)} = ( \mu_x^{(k+1)}, \sigma_x^{(k+1)}, \delta^{(k+1)})$
    \STATE $k=k+1$
    \ENDWHILE
    \RETURN $\tau_{IGMM} = \tau^{(k)}$
  \end{algorithmic}
\end{algorithm}

\paragraph{Discussion of Algorithm \ref{alg:IGMM}:} Algorithm \ref{alg:IGMM} first computes $\mathbf{z}^{(k)} = (\mathbf{y} - \mu_x^{(k)})/\sigma_x^{(k)}$ using $\mu_x^{(k)}$ and $\sigma_x^{(k)}$ from the previous step. This normalized output can then be passed to Algorithm \ref{alg:delta_GMM} to obtain an updated $\delta^{(k+1)}= \widehat{\delta}_{GMM}$. Using this new $\delta^{(k+1)}$ one can back-transform $\mathbf{z}^{(k)}$ to $\mathbf{u}^{(k+1)} = W_{\delta^{(k+1)}}(\mathbf{z}^{(k)})$, and consequently obtain a better approximation to the ``true'' latent $\mathbf{x}$ by $\mathbf{x}^{(k+1)} = \mathbf{u}^{(k+1)} \, \sigma_x^{(k)} + \mu_x^{(k)}$. However, $\delta^{(k+1)}$ - and therefore $\mathbf{x}^{(k+1)}$ - has been obtained using $\mu_x^{(k)}$ and $\sigma_x^{(k)}$, which are not necessarily the most accurate estimates in light of the updated approximation $\widehat{\mathbf{x}}_{( \mu_x^{(k)}, \sigma_x^{(k)}, \delta^{(k+1)})}$. Thus Algorithm \ref{alg:IGMM} computes new estimates $\mu_x^{(k+1)}$ and $\sigma_x^{(k+1)}$ by the sample mean and standard deviation of $\widehat{\mathbf{x}}_{( \mu_x^{(k)}, \sigma_x^{(k)}, \delta^{(k+1)})}$, and starts another iteration by passing the updated normalized output $\mathbf{z}^{(k+1)} = \frac{\mathbf{y} - \mu_x^{(k+1)}}{\sigma_x^{(k+1)}}$ to Algorithm \ref{alg:delta_GMM} to obtain a new $\delta^{(k+2)}$. 

It returns the optimal $\widehat{\tau}_{\textsc{IGMM}}$ once convergence has been reached, i.e., if $\vnorm{\tau^{(k)} - \tau^{(k+1)}} < tol$.\\

\begin{remark}[IGMM for double-tail Lambert W $\times$ $F_X$]
For a double-tail fit the one-dimensional optimization in  Algorithm \ref{alg:delta_GMM} has to be replaced with a two-dimensional optimization
\begin{equation}
\label{eq:delta2_GMM}
\left( \widehat{\delta}_{\ell}, \widehat{\delta}_r \right)_{\textsc{GMM}} = \arg \min_{\delta_{\ell}, \delta_r} h\left( \gamma_2(X) - \widehat{\gamma}_2(\mathbf{x}_{(\mu_x^{*}, \sigma_x^{*}, \delta_{\ell}, \delta_r)}) \right).
\end{equation}
Algorithm \ref{alg:IGMM} remains unchanged.
\end{remark}

\clearpage

\section{Simulation Details}

\paragraph{Slightly heavy-tailed: $\delta = 1/10$.} 
Here the RV $Y$ has slight excess kurtosis ($3 + 2.51$) and $\sigma_y(\delta, \sigma_x = 1) = 1.18$. The Lambert W estimates of  $\widehat{\tau}$ are unbiased, and have smaller empirical standard deviation for $\widehat{\mu}_x$ than the Gaussian MLE or the median. Also using Lambert W estimators does not give worse estimates for $\sigma_y$.

\begin{table}[!ht]
\centering

\subfloat[Slightly heavy-tailed data: $\delta = 1/10$ \label{tab:simulations_delta_1_10}]{
\tiny
\centering
\begin{tabular}{|c||r||rr||rrr|r||rrr|r||r|}\hline 
 \multicolumn{1}{|c||}{$\delta = 1/10$}&\multicolumn{1}{|c||}{median}&\multicolumn{2}{c||}{Gaussian MLE} &\multicolumn{4}{c||}{IGMM}& \multicolumn{4}{c||}{Lambert W MLE} & \multicolumn{1}{c|}{NA}\tabularnewline
 \hline
 \multicolumn{1}{|c||}{N}&\multicolumn{1}{c||}{$ $ }&\multicolumn{1}{c}{$\mu_y$}&\multicolumn{1}{c||}{$\sigma_y$}&\multicolumn{1}{c}{$\mu_x$}&\multicolumn{1}{c}{$\sigma_x$}&\multicolumn{1}{c|}{$\delta$}&\multicolumn{1}{c||}{$\sigma_y$}&\multicolumn{1}{c}{$\mu_x$}&\multicolumn{1}{c}{$\sigma_x$}&\multicolumn{1}{c|}{$\delta$}&\multicolumn{1}{c||}{$\sigma_y$}&\multicolumn{1}{c|}{ratio}\tabularnewline
\hline
$  50$&$-0.02$&$-0.02$&$1.15$&$-0.02$&$1.02$&$0.08$&$1.18$&$-0.02$&$0.99$&$0.09$&$ \infty $&$0$\tabularnewline
$ 100$&$ 0.00$&$ 0.00$&$1.17$&$ 0.00$&$1.02$&$0.09$&$1.18$&$ 0.00$&$1.00$&$0.09$&$1.18$&$0$\tabularnewline
$ 250$&$ 0.00$&$ 0.00$&$1.18$&$ 0.00$&$1.01$&$0.09$&$1.18$&$ 0.00$&$1.00$&$0.10$&$1.18$&$0$\tabularnewline
$1000$&$ 0.00$&$ 0.00$&$1.18$&$ 0.00$&$1.00$&$0.10$&$1.18$&$ 0.00$&$1.00$&$0.10$&$1.18$&$0$\tabularnewline
\hline
$  50$&$ 0.56$&$ 0.53$&$0.61$&$ 0.54$&$0.48$&$0.64$&$0.55$&$ 0.53$&$0.55$&$0.58$&$0.56$&$0$\tabularnewline
$ 100$&$ 0.50$&$ 0.49$&$0.57$&$ 0.50$&$0.45$&$0.61$&$0.54$&$ 0.49$&$0.51$&$0.56$&$0.54$&$0$\tabularnewline
$ 250$&$ 0.50$&$ 0.48$&$0.56$&$ 0.47$&$0.46$&$0.56$&$0.53$&$ 0.48$&$0.51$&$0.54$&$0.53$&$0$\tabularnewline
$1000$&$ 0.48$&$ 0.49$&$0.53$&$ 0.48$&$0.50$&$0.54$&$0.51$&$ 0.48$&$0.52$&$0.51$&$0.52$&$0$\tabularnewline
\hline
$  50$&$ 1.27$&$ 1.22$&$1.13$&$ 1.18$&$1.03$&$0.52$&$1.27$&$ 1.16$&$1.07$&$0.62$&$NA $&$0$\tabularnewline
$ 100$&$ 1.28$&$ 1.19$&$1.21$&$ 1.15$&$1.07$&$0.60$&$1.26$&$ 1.12$&$1.09$&$0.64$&$1.28$&$0$\tabularnewline
$ 250$&$ 1.26$&$ 1.19$&$1.20$&$ 1.12$&$1.09$&$0.63$&$1.22$&$ 1.09$&$1.09$&$0.65$&$1.23$&$0$\tabularnewline
$1000$&$ 1.23$&$ 1.17$&$1.26$&$ 1.11$&$1.14$&$0.66$&$1.26$&$ 1.08$&$1.11$&$0.63$&$1.23$&$0$\tabularnewline
\hline
\end{tabular}
}\\

\caption{\label{tab:simulations} Based on $n=1,000$ replications. In each sub-table: (first rows) average, (middle rows) proportion of estimates below true value, (bottom rows) empirical standard deviation times $\sqrt{N}$.}
\end{table}

\begin{algorithm}[t]
  \caption{\label{alg:sim_LambertW} Random sample generation \blinded{: function \texttt{rLambertW($\cdot$)} in \texttt{LambertW} package.}}
 \begin{algorithmic}[1]
	\renewcommand{\algorithmicrequire}{\textbf{Input:}}
	\renewcommand{\algorithmicensure}{\textbf{Output:}}
\REQUIRE number of observations $n$; parameter vector $\theta$; specification of the input distribution $F_X(x)$
\ENSURE random sample $(y_1, \ldots, y_n)$ of a Lambert W $\times$ $F_X$ RV.

    \medskip
    \STATE Simulate $n$ samples $\mathbf{x} = (x_1, \ldots, x_n) \sim F_X(x)$.
    \STATE Compute $\mu_x = \mu_x(\boldsymbol \beta)$ and $\sigma_x = \sigma_x(\boldsymbol \beta)$ (for scale family set $\mu_x = 0$, for non-central, non-scaled also set $\sigma_x = 1$)
    \STATE Compute normalized $\mathbf{u} = (\mathbf{x}-\mu_x) / \sigma_x$.
    \STATE $\mathbf{z} = \mathbf{u} \exp \left( \frac{\delta}{2} \mathbf{u}^2 \right)$
    \RETURN  $\mathbf{y} = \mathbf{z} \sigma_x + \mu_x$
  \end{algorithmic}
\end{algorithm}

\end{document}